\numberwithin{equation}{section}
\newtheorem{theorem}{Theorem}[section]
\newtheorem{Th}[theorem]{Theorem}
\newtheorem{Lm}[theorem]{Lemma}
\newtheorem{Prop}[theorem]{Proposition}
\newtheorem{Def}[theorem]{Definition}
\newtheorem{Remark}[theorem]{Remark}
\title{Characterization of AC and Sobolev curves via Lipschitz post-compositions}
\author{Roman D. Oleinik and Alexander I. Tyulenev}
\address{Steklov Mathematical Institute of Russian academy of Sciences}
\email{oleinik.r@phystech.edu;
tyulenev-math@yandex.ru,tyulenev@mi-ras.ru}
\begin{document}
\date{\today}
\allowdisplaybreaks
\keywords{Metric spaces, absolutely continuous curves}
\subjclass[2010]{53C23, 46E35}
\begin{abstract}
Let $\operatorname{X}:=(\operatorname{X},\operatorname{d})$ be an arbitrary metric space.
For each $p \in [1,\infty]$, we prove that a map $\gamma:[a,b] \to \operatorname{X}$ is $p$-absolutely continuous if and only if, for every
Lipschitz function $h:\operatorname{X} \to \mathbb{R}$, the post-composition $h \circ \gamma$
is a $p$-absolutely continuous function. Furthermore,
if $\operatorname{X}$ is complete and separable, then, for each $p \in (1,\infty)$, we show that the equivalence class (up to $\mathcal{L}^{1}$-a.e.\ equality) of a Borel map $\gamma:[a,b] \to \operatorname{X}$
belongs to the Sobolev $W_{p}^{1}([a,b],\operatorname{X})$-space if and only if,
for every
Lipschitz function $h:\operatorname{X} \to \mathbb{R}$, the equivalence class (up to $\mathcal{L}^{1}$-a.e.\ equality) of the post-composition $h \circ \gamma $ belongs to the Sobolev $W_{p}^{1}([a,b],\mathbb{R})$-space.
\end{abstract}
\maketitle
\tableofcontents
\section{Introduction}
\noindent

The study of many regularity properties of a given map $\gamma:[a,b] \to \mathbb{R}^{n}$ can be easily reduced to the same regularity properties of real-valued functions by taking post-compositions of $\gamma$ with coordinate functions. In other words,
there is a special finite family of $1$-Lipschitz functions intimately related to the structure of the space that can be used to control the behavior of $\gamma$.
However, if we replace $\mathbb{R}^{n}$ by an arbitrary metric space $(\operatorname{X},\operatorname{d})$, the situation
changes drastically. Indeed, due to the lack of any additional structure, the only substitution for the role of ``coordinates on $\operatorname{X}$'' is
the family of all $1$-Lipschitz real-valued functions on $\operatorname{X}$. This family is typically infinite, as well as there is no
canonical finite collection within it that can be used as a keystone for subsequent analysis.

The main goal of this paper is to show that, given an abstract metric space $\operatorname{X}=(\operatorname{X},\operatorname{d})$, some crucial regularity properties of a generic map $\gamma:[a,b] \to \operatorname{X}$ can be extracted from the same regularity properties of its post-compositions with Lipschitz functions.

The first regularity property we are aimed to attack is $p$-absolute continuity. Recall that, given $p \in [1,\infty]$ and an arbitrary metric space
$\operatorname{X}=(\operatorname{X},\operatorname{d})$, a map $\gamma:[a,b] \to \operatorname{X}$ is said to be \textit{$p$-absolutely continuous} if there
is a nonnegative function $g \in L_{p}([a,b])$ such that
$$
\operatorname{d}(\gamma(t_{1}),\gamma(t_{2})) \le \int\limits_{t_{1}}^{t_{2}}g(\tau)\,d\tau  \quad \hbox{for all} \quad a \le t_{1} \le t_{2} \le b.
$$
The family of all $p$-absolutely continuous maps $\gamma:[a,b] \to \operatorname{X}$ will be denoted by $AC_{p}([a,b],\operatorname{X})$. We also set
$AC_{p}([a,b]):=AC_{p}([a,b],\mathbb{R})$ for brevity.

We let $\operatorname{LIP}(\operatorname{X})$ denote the space of all Lipschitz functions $h:\operatorname{X} \to \mathbb{R}$.
It is well known and not difficult to show (see Lemma \ref{Lm.elementary_characterization} below) that, given $p \in [1,\infty]$ and a metric space $\operatorname{X}=(\operatorname{X},\operatorname{d})$,
a map $\gamma: [a,b] \to \operatorname{X}$ belongs to the space $AC_{p}([a,b],\operatorname{X})$ if and only if
there is a nonnegative $g \in L_{p}([a,b])$ such that for every $h \in \operatorname{LIP}(\operatorname{X})$ the composition $h \circ \gamma$ belongs to
$AC_{p}([a,b])$ and (here and below, $\mathcal{L}^{n}$ means the classical $n$-dimensional Lebesgue measure)
\begin{equation}
\label{eqq.AC_ineq}
\Bigl|\frac{d(h \circ \gamma)}{dt}(t)\Bigr| \le \operatorname{lip}h(\gamma(t)) g(t) \quad \hbox{for $\mathcal{L}^{1}$-a.e. $t \in [a,b]$},
\end{equation}
where $\operatorname{lip}h(\gamma(t))$ is the so-called local
Lipschitz constant (or, equivalently, the slope of $h$) at the point $\gamma(t)$.
Furthermore, there exists a minimal in the $\mathcal{L}^{1}$-a.e.\ sense function $g$ for which \eqref{eqq.AC_ineq} holds, called the metric speed of $\gamma$
and denoted by $|\dot{\gamma}|$.

One can put this observation  as a keystone for the possible definition of Sobolev and BV maps with values in arbitrary metric spaces.
This was firstly done by L.~Ambrosio in \cite{Amb} for the BV-case and then by Yu.~G.~Reshetnyak in \cite{Re} for the Sobolev case.
Before we briefly recall the corresponding definition, we should warn the reader that throughout
the paper, given a metric space $\operatorname{Y}=(\operatorname{Y},\rho)$, we will always distinguish a map $f:[a,b] \to \operatorname{Y}$
from its equivalence (up to $\mathcal{L}^{1}$-a.e.\ equality on $[a,b]$) class $[f]$. All inequalities involving equivalence classes of maps should be understand to hold
for arbitrary representatives of the corresponding classes.

Combining Lemma 2.13 in \cite{GT1} with Corollary 3.10 in \cite{GT2} we can reformulate Reshetnyak's definition of Sobolev curves as follows. Given a complete separable metric
space $\operatorname{X}=(\operatorname{X},\operatorname{d})$ and a parameter $p \in (1,\infty)$, we say that the equivalence class $[\gamma]$ (modulo $\mathcal{L}^{1}$-a.e.\ coincidence on $[a,b]$) of a
Borel map $\gamma:[a,b] \to \operatorname{X}$ belongs to the Sobolev space $W_{p}^{1}([a,b],\operatorname{X})$ if $[\gamma] \in L_{p}([a,b],\operatorname{X})$, $[h \circ \gamma] \in W_{p}^{1}([a,b])$ for every  $h \in \operatorname{LIP}(\operatorname{X})$, and there is
a nonnegative Borel function $G$ with $[G] \in L_{p}([a,b])$ called a \textit{$p$-weak upper gradient of $[\gamma]$} such that
\begin{equation}
\label{eqq.Sobolev_ineq}
\bigl|\partial [h \circ \gamma](t)\bigr| \le [\operatorname{lip}(h \circ \gamma)](t) [G](t) \quad \hbox{for $\mathcal{L}^{1}$-a.e. $t \in [a,b]$},
\end{equation}
where $\partial [h \circ \gamma]$ is the usual Sobolev distributional derivative of $[h\circ\gamma]$. Furthermore, there is a minimal (in the $\mathcal{L}^{1}$-a.e.\ sense) function $G$ for which \eqref{eqq.Sobolev_ineq} holds, the equivalence class $[G]$ is called the \textit{distributional
derivative} of $[\gamma]$,
and is denoted by $|\partial{[\gamma]}|$ (see Theorem 2.11 in \cite{GT1}).

In other words,  given $p \in [1,\infty]$, in order to determine whether a Borel map $\gamma:[a,b] \to \operatorname{X}$ or its class $[\gamma]$
belongs to $AC_{p}([a,b],\operatorname{X})$-space or $W_{p}^{1}([a,b],\operatorname{X})$-space, respectively, one should establish the existence of a some
sort of an $L_{p}$-majorant for the derivatives of post-compositions of $\gamma$ with arbitrary Lipschitz functions.
We show that one can \textit{remove quantitative conditions} from the definitions, i.e.
the existence of the corresponding majorants should automatically follow from the ``qualitative'' fact that for every
$h \in \operatorname{LIP}(\operatorname{X})$ we have $h \circ \gamma \in AC_{p}([a,b])$ or $[h \circ \gamma] \in W_{p}^{1}([a,b])$, respectively.
\textit{The first main result} of the present paper is a characterization of $p$-absolutely continuous curves via Lipschitz post-compositions.

\begin{Th}
\label{Th.main_1}
Let $\operatorname{X}=(\operatorname{X},\operatorname{d})$ be a metric space. Given $p \in [1,\infty]$, a map $\gamma:[a,b] \to \operatorname{X}$ belongs
to the space  $AC_{p}([a,b],\operatorname{X})$ if and only if $h \circ \gamma \in AC_{p}([a,b])$ for every $h \in \operatorname{LIP}(\operatorname{X})$.
\end{Th}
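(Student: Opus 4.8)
The plan is to prove the two implications separately; the forward one is immediate and the converse carries all the weight.

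\textbf{Forward implication, continuity, and reduction.} If $\gamma\in AC_{p}([a,b],\operatorname{X})$ with majorant $g\in L_{p}([a,b])$ and $h$ is $L$-Lipschitz, then $|h(\gamma(t_{1}))-h(\gamma(t_{2}))|\le L\operatorname{d}(\gamma(t_{1}),\gamma(t_{2}))\le L\int_{t_{1}}^{t_{2}}g$, so $h\circ\gamma\in AC_{p}([a,b])$; this is also contained in Lemma~\ref{Lm.elementary_characterization}. For the converse, assume $h\circ\gamma\in AC_{p}([a,b])$ for all $h\in\operatorname{LIP}(\operatorname{X})$. Applying this to $h_{t_{0}}:=\operatorname{d}(\gamma(t_{0}),\cdot)$ shows $t\mapsto\operatorname{d}(\gamma(t),\gamma(t_{0}))$ is continuous and vanishes at $t_{0}$; as $t_{0}$ is arbitrary, $\gamma$ is continuous and $K:=\gamma([a,b])$ is compact, hence separable. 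Fix a countable dense set $\{x_{n}\}\subseteq K$, let $h_{n}$ be a $1$-Lipschitz (McShane) extension to $\operatorname{X}$ of $\operatorname{d}(x_{n},\cdot)|_{K}$, and put $u_{n}:=h_{n}\circ\gamma\in AC_{p}([a,b])$, $g_{n}:=|u_{n}'|\in L_{p}([a,b])$. By density of $\{x_{n}\}$ in $K$ and the reverse triangle inequality,
\[
\operatorname{d}(\gamma(t_{1}),\gamma(t_{2}))=\sup_{n}|u_{n}(t_{1})-u_{n}(t_{2})|=\sup_{n}\Bigl|\int_{t_{1}}^{t_{2}}u_{n}'\Bigr|\le\int_{t_{1}}^{t_{2}}\sup_{n}g_{n}\qquad(t_{1}\le t_{2}),
\]
so it suffices to prove $g:=\sup_{n}g_{n}\in L_{p}([a,b])$: then $g$ is the desired majorant.

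\textbf{A uniform bound via Baire category.} Fix $x_{*}\in K$ and let $\mathcal F:=\{h\in\operatorname{LIP}(K):\operatorname{Lip}(h)\le1,\ h(x_{*})=0\}$; by the Arzel\`a--Ascoli theorem $\mathcal F$ is a compact (hence complete) convex subset of $C(K)$. The functional $\Phi(h):=\|(h\circ\gamma)'\|_{L_{p}([a,b])}$ is finite on $\mathcal F$ (by the hypothesis applied to a $1$-Lipschitz extension of $h$) and lower semicontinuous: if $h_{k}\to h$ uniformly on $K$ then $h_{k}\circ\gamma\to h\circ\gamma$ uniformly on $[a,b]$, and along a subsequence realizing $\liminf_{k}\Phi(h_{k})$ the $L_{p}$-bounded derivatives $(h_{k}\circ\gamma)'$ converge weakly (weakly-$*$ if $p=\infty$) to $(h\circ\gamma)'$, giving $\Phi(h)\le\liminf_{k}\Phi(h_{k})$; for $p=1$ one instead invokes lower semicontinuity of total variation. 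Since $\mathcal F=\bigcup_{j}\{\Phi\le j\}$ is a countable union of closed sets, Baire's theorem yields $h_{0}\in\mathcal F$, $r>0$ and $j_{0}$ with $\{h\in\mathcal F:\|h-h_{0}\|_{\infty}<r\}\subseteq\{\Phi\le j_{0}\}$. Choosing $\varepsilon_{0}>0$ with $2\varepsilon_{0}\operatorname{diam}K<r$ (independently of $h$), for any $h\in\mathcal F$ the convex combination $(1-\varepsilon_{0})h_{0}+\varepsilon_{0}h\in\mathcal F$ lies in this ball, whence $\varepsilon_{0}\Phi(h)\le\Phi((1-\varepsilon_{0})h_{0}+\varepsilon_{0}h)+(1-\varepsilon_{0})\Phi(h_{0})\le2j_{0}$, so $\Phi(h)\le C:=2j_{0}/\varepsilon_{0}$. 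Restricting a general $1$-Lipschitz $h$ on $\operatorname{X}$ to $K$ and subtracting a constant, we conclude $\|(h\circ\gamma)'\|_{L_{p}}\le C$ for \emph{every} $1$-Lipschitz $h$ on $\operatorname{X}$; in particular $\|g_{n}\|_{L_{p}}\le C$ for all $n$. When $p=\infty$ this already gives $\operatorname{d}(\gamma(t_{1}),\gamma(t_{2}))\le C|t_{1}-t_{2}|$, i.e.\ $\gamma\in AC_{\infty}$.

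\textbf{The main obstacle.} For $p<\infty$ it remains to pass from the uniform estimate $\sup_{n}\|g_{n}\|_{L_{p}}\le C$ to $\sup_{n}g_{n}\in L_{p}$, and this is the crux: the uniform bound by itself is \emph{not} enough, since a pointwise supremum of uniformly $L_{p}$-bounded functions can fail to be $L_{p}$-integrable. The argument must therefore use that \emph{all} Lipschitz post-compositions of $\gamma$, not just those with the $h_{n}$, are $p$-absolutely continuous. The route I would pursue exploits Lipschitz functions ``assembled'' from the $h_{n}$ by pointwise suprema: for $h=\sup_{n}(h_{n}-c_{n})$ one has $(h\circ\gamma)'(t)=u_{n(t)}'(t)$ for the $\mathcal L^{1}$-a.e.\ unique maximizing index $n(t)$ at $\gamma(t)$, so that controlling $(h\circ\gamma)'$ over a suitably parametrized family of such functions controls measurable selections from $\{u_{n}'\}$; running a second Baire-category/rescaling argument over that family should produce $\|\max_{n\le N}g_{n}\|_{L_{p}}\le C'$ uniformly in $N$, and then $\sup_{n}g_{n}\in L_{p}$ by monotone convergence. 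The delicate points — which I expect to be the technical heart of the proof — are the interchange of $\sup_{n}$ with differentiation and with Baire-genericity, the correct choice of the complete metric space carrying the rescaling device, and the endpoint case $p=1$, where weak $L_{p}$-compactness must be replaced by lower semicontinuity of variation. Once $g=\sup_{n}g_{n}\in L_{p}([a,b])$ is established, the displayed estimate in the first step gives $\operatorname{d}(\gamma(t_{1}),\gamma(t_{2}))\le\int_{t_{1}}^{t_{2}}g$ for all $t_{1}\le t_{2}$, i.e.\ $\gamma\in AC_{p}([a,b],\operatorname{X})$.
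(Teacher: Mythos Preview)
Your forward implication, the continuity step, and the reduction to $g:=\sup_{n}g_{n}\in L_{p}$ are correct. The Baire-category argument on the compact convex set $\mathcal{F}\subset C(K)$ is a clean route to the uniform bound $\|(h\circ\gamma)'\|_{L_{p}}\le C$ for every $1$-Lipschitz $h$, and for $p=\infty$ this does finish the proof. For $p<\infty$, however, there is a genuine gap that you name but do not close. The proposed device $h=\sup_{n}(h_{n}-c_{n})$ cannot bridge it: even when a single branch $n(t)$ realizes the supremum near $t$ so that $(h\circ\gamma)'(t)=u_{n(t)}'(t)$, the index $n(t)$ is determined by the \emph{values} $h_{n}(\gamma(t))-c_{n}$ and has no reason to coincide with $\operatorname{argmax}_{n}|u_{n}'(t)|$; moreover $n(t)$ depends only on $\gamma(t)$ and is therefore constant along each fiber $\gamma^{-1}(x)$, while $\operatorname{argmax}_{n}g_{n}(t)$ can vary along the fiber. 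A ``second Baire argument'' over families of such suprema is thus aimed at the wrong target, and as written it is a hope rather than an argument. Note, too, that your uniform bound does not even yield $\gamma\in BV$ (let alone $AC_{1}$): for a fixed partition one has $\sum_{i}\operatorname{d}(\gamma(t_{i}),\gamma(t_{i-1}))=\sum_{i}\sup_{h}|h(\gamma(t_{i}))-h(\gamma(t_{i-1}))|$, and these suprema cannot be merged into a single $h$.

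The paper proceeds by an entirely different, geometric mechanism. It first shows $\mathcal{H}^{1}(\Gamma)<\infty$ (Theorem~\ref{Th.1}), invokes $1$-rectifiability of $\Gamma$ (Proposition~\ref{Prop.rectifiability}), and then \emph{constructs} a Lipschitz function $h$ whose Jacobian with respect to $\Gamma$ equals $1$ on a set of almost full $\mathcal{H}^{1}$-measure---either via Bate's residual-set theorem (Proposition~\ref{Prop.Bate}) or by an explicit sawtooth construction along arc-length pieces of $\Gamma$ (Theorem~\ref{Th.key_function}). The area formula then gives $V_{h\circ\gamma}=V_{\gamma}$ on every subinterval and, once $\gamma\in AC_{1}$ is secured via the Banach--Zarecki characterization, $|(h\circ\gamma)'|=|\dot{\gamma}|$ a.e.; since $h\circ\gamma\in AC_{p}$ by hypothesis, $|\dot{\gamma}|\in L_{p}$ follows at once. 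Banach--Steinhaus-type bounds of the kind you prove do appear (Proposition~\ref{Prop.Banach_Steinhaus_substitution}, Lemmas~\ref{Lm.Banah_Steinhauz} and~\ref{Lm.Bate_substitution}), but only as auxiliary tools applied \emph{after} the geometric construction, not as its engine.
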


The proof of the ``necessity part'' is clear.
The strategy of the proof of the ``sufficiency part'' follows the scheme:

\begin{itemize}

\item[\((\textbf{ST}1)\)] We show that if $h \circ \gamma \in C([a,b])$ for every $h \in \operatorname{LIP}(\operatorname{X})$, then
$\gamma \in C([a,b],\operatorname{X})$;

\item[\((\textbf{ST}2)\)]
We show that if the post-composition $h \circ \gamma$ has  bounded variation for every $h \in \operatorname{LIP}(\operatorname{X})$,
then the image $\Gamma:=\gamma([a,b])$ has finite $\mathcal{H}^{1}$-measure in $\operatorname{X}$;

\item[\((\textbf{ST}3)\)]  Assuming that $h \circ \gamma$ has  bounded variation for every $h \in \operatorname{LIP}(\operatorname{X})$, we use the classical
$1$-rectifiability result for curves (see, for example, Theorem 4.4.5 in \cite{AmbTil}) to show that $\Gamma:=\gamma([a,b])$ is $1$-rectifiable;

\item[\((\textbf{ST}4)\)] For each $\epsilon > 0$, we built a Lipschitz function $h_{\epsilon}$ whose Jacobian with respect to $\Gamma$ equals $1$ on a set
$\Gamma_{\epsilon} \subset \Gamma$ with $\mathcal{H}^{1}(\Gamma \setminus \Gamma_{\epsilon}) < \epsilon$;

\item[\((\textbf{ST}5)\)] Assuming that $h \circ \gamma$ has  bounded variation for every $h \in \operatorname{LIP}(\operatorname{X})$,
we apply the area formula in combination with integral representation for a variation to establish that $\gamma$ has
bounded variation;

\item[\((\textbf{ST}6)\)] If $h \circ \gamma \in AC_{1}([a,b])$ for every $h \in \operatorname{LIP}(\operatorname{X})$, then
combining the Banach--Zarecki-type characterization of $AC_{1}$-curves with (\textbf{ST}3)--(\textbf{ST}5) we deduce that $\gamma \in AC_{1}([a,b],\operatorname{X})$.

\item[\((\textbf{ST}7)\)] If $h \circ \gamma \in AC_{p}([a,b])$ for every $h \in \operatorname{LIP}(\operatorname{X})$, then
we combine (\textbf{ST}4) and (\textbf{ST}6) with some standard arguments from the infinite dimensional analysis to deduce that $\gamma$ belongs to $AC_{p}([a,b],\operatorname{X})$.

\end{itemize}

We believe that step (\textbf{ST}4) is the most complicated part of the proof. We would like to give some explanations.
Furthermore, in this paper we give two alternative ways for that step, ``inderect way''  and ``direct way'', respectively.

\textit{The first} ``inderect way'' for (\textbf{ST}4) relies on the beautiful powerful results from \cite{Bate}. Namely, using Theorem 7.6 of that paper we can make
the following observation, which can be interesting in itself.
We denote by $\operatorname{BLIP}_{1}(\operatorname{X})$ the set of
all bounded 1-Lipschitz functions $h:\operatorname{X} \to \mathbb{R}$,
equipped with the supremum norm, a complete metric space.
Recall that a set $S \subset \operatorname{BLIP}_{1}(\operatorname{X})$ is  \textit{residual} if it contains a countable
intersection of open dense subsets. Below, the symbol $\mathcal{J}_{\Gamma}h$ means the Jacobian of a function $h \in \operatorname{LIP}(\operatorname{X})$
with respect to $\Gamma$ (see Definition \ref{Def.Jacobian} for details).
Furthermore, given a metric space $\operatorname{Y}=(\operatorname{Y},\rho)$ and a map $g:[a,b] \to \operatorname{Y}$, the symbol $V_{g}([a,b])$
means the variation of $g$ on $[a,b]$. Finally, by $BV([a,b],\operatorname{Y})$ we denote the set of all maps $g:[a,b] \to \operatorname{Y}$ with $V_{g}([a,b]) < +\infty$ (see Definition \ref{Def.1} below).

\begin{Th}
\label{Th.main_Bate}
Let $\operatorname{X}=(\operatorname{X},\operatorname{d})$ be a nonempty metric space. If a map $\gamma: [a,b] \to \operatorname{X}$ is such that $h \circ \gamma \in C([a,b]) \cap BV([a,b])$ for every $h \in \operatorname{LIP}(\operatorname{X})$, then $\gamma \in C([a,b],\operatorname{X}) \cap BV([a,b],\operatorname{X})$ and there exists a residual set $S \subset \operatorname{BLIP}_{1}(\operatorname{X})$ such that,
for each $h \in S$, $\mathcal{J}_{\Gamma}h(x)=1$ for $\mathcal{H}^{1}$-a.e. $x \in \Gamma$ and, for each $h \in S$,
\begin{equation}
\label{eqq.variation}
V_{h \circ \gamma}([c,d]) = V_{\gamma}([c,d]) \quad \hbox{for all} \quad a \le c \le d \le b.
\end{equation}
\end{Th}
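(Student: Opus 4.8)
The plan is to reduce Theorem~\ref{Th.main_Bate} to the geometric information about $\Gamma:=\gamma([a,b])$ extracted in steps (\textbf{ST}1)--(\textbf{ST}5), and then to package the ``generic Lipschitz function has Jacobian $1$'' statement as a Baire-category assertion. First I would invoke (\textbf{ST}1) to get $\gamma\in C([a,b],\operatorname{X})$, and (\textbf{ST}2)--(\textbf{ST}5) to get $\gamma\in BV([a,b],\operatorname{X})$ together with the fact that $\Gamma$ is a $1$-rectifiable set of finite $\mathcal{H}^1$-measure. Once $\gamma$ is a continuous $BV$ curve with $1$-rectifiable image, the classical theory (e.g.\ the arc-length reparametrization and the area formula on $\Gamma$) gives, for \emph{any} $h\in\operatorname{LIP}(\operatorname{X})$,
\begin{equation}
\label{eqq.var-rep}
V_{h\circ\gamma}([c,d]) \le \int_{\gamma([c,d])} \mathcal{J}_{\Gamma}h(x)\, d\mathcal{H}^1(x) \le V_{\gamma}([c,d]),
\end{equation}
since $\mathcal{J}_{\Gamma}h\le \operatorname{Lip}(h)\le 1$ for $h\in\operatorname{BLIP}_1(\operatorname{X})$ and the length of the curve majorizes $\mathcal{H}^1$ of its image with multiplicity. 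So \eqref{eqq.variation} is equivalent to $\mathcal{J}_{\Gamma}h=1$ $\mathcal{H}^1$-a.e.\ on $\Gamma$, and both reduce to producing, generically in $h$, a Lipschitz function that is ``infinitesimally isometric'' along $\Gamma$.

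The heart of the matter is therefore the residuality claim, and here I would follow the ``indirect way'' and quote Theorem~7.6 of \cite{Bate}. That result (in the form I expect to use) says that for a $1$-rectifiable metric space $\Gamma$ of finite $\mathcal{H}^1$-measure, the set of $h\in\operatorname{BLIP}_1(\operatorname{X})$ for which $\mathcal{J}_{\Gamma}h(x)=1$ for $\mathcal{H}^1$-a.e.\ $x\in\Gamma$ is residual in the complete metric space $\operatorname{BLIP}_1(\operatorname{X})$; the intuition is that a generic $1$-Lipschitz function ``uses up'' all of its Lipschitz budget in the tangent direction along a rectifiable curve, because failing to do so on a positive-measure piece is a closed, nowhere-dense obstruction once one can perturb $h$ by small McShane extensions of functions that oscillate correctly along short subarcs. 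One must check the hypotheses of \cite{Bate}*{Theorem 7.6} apply: $\operatorname{X}$ arbitrary metric, $\Gamma$ a $1$-rectifiable Borel subset (from (\textbf{ST}3)) with $\mathcal{H}^1(\Gamma)<\infty$ (from (\textbf{ST}2)); the ambient completeness is not needed since one can pass to $\overline{\Gamma}$ or simply work with $\operatorname{BLIP}_1$ on $\operatorname{X}$ itself, as Lipschitz functions extend. Let $S$ be that residual set.

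Finally I would close the loop: for $h\in S$ we have $\mathcal{J}_{\Gamma}h=1$ $\mathcal{H}^1$-a.e.\ on $\Gamma$, hence also $\mathcal{H}^1$-a.e.\ on $\gamma([c,d])$ for every subinterval $[c,d]$, so both inequalities in \eqref{eqq.var-rep} become equalities and \eqref{eqq.variation} follows for all $a\le c\le d\le b$ simultaneously (the exceptional $\mathcal{H}^1$-null set is independent of $[c,d]$). The point where I expect the real work to lie is justifying the two inequalities in \eqref{eqq.var-rep} rigorously in the metric setting — i.e.\ the area/coarea-type formula $V_{h\circ\gamma}([c,d])=\int_{[c,d]}|\partial(h\circ\gamma)|\,d\mathcal{L}^1$ combined with the metric area formula $\int_{[c,d]} \operatorname{lip}(h\circ\gamma)(t)\,|\dot\gamma|(t)\,dt=\int_{\Gamma}\mathcal{J}_\Gamma h\,\theta\,d\mathcal{H}^1$ with the multiplicity function $\theta\ge 1$ — and in correctly matching the \emph{pointwise} identity $|\partial(h\circ\gamma)(t)| = \mathcal{J}_{\Gamma}h(\gamma(t))\,|\dot\gamma|(t)$ for a.e.\ $t$, which is exactly the content of (\textbf{ST}4)--(\textbf{ST}5) and the definition of the Jacobian $\mathcal{J}_\Gamma h$. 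Granting those, the residuality statement from \cite{Bate} does the rest, and no quantitative hypothesis on $h\circ\gamma$ beyond continuity and bounded variation has been used.
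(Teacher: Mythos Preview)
Your overall strategy coincides with the paper's: continuity via (\textbf{ST}1), $\mathcal{H}^1(\Gamma)<\infty$ via (\textbf{ST}2), $1$-rectifiability of $\Gamma$ via (\textbf{ST}3), then Theorem~7.6 of \cite{Bate} for the residual set $S$, and finally an area-formula computation to pass from $\mathcal{J}_\Gamma h=1$ $\mathcal{H}^1$-a.e.\ to $V_{h\circ\gamma}=V_\gamma$ on every subinterval. Two points, however, need correction.

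First, your displayed chain \eqref{eqq.var-rep} is false as written whenever $\gamma$ is not injective: if $\gamma$ traverses a piece of $\Gamma$ several times, $V_{h\circ\gamma}([c,d])$ can strictly exceed $\int_{\gamma([c,d])}\mathcal{J}_\Gamma h\,d\mathcal{H}^1$, because the integral over the image forgets the multiplicity $\#\gamma^{-1}(x)$. Your later fix via $\int_{[c,d]}\operatorname{lip}(h\circ\gamma)\,|\dot\gamma|\,dt$ does not help here, since it presupposes that $\gamma$ is absolutely continuous, which is not yet known (we only have $\gamma\in BV$). The paper sidesteps both issues by using two \emph{exact} identities valid for any continuous map: Federer's indicatrix formula $V_g([c,d])=\int_{g([c,d])}\#g^{-1}(y)\,d\mathcal{H}^1(y)$ (Proposition~\ref{Prop.integral_repres_for_variation}) and the metric area formula (Proposition~\ref{Prop.area_formula}). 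These combine to give, for every $h\in\operatorname{LIP}(\operatorname{X})$,
\[
V_{h\circ\gamma}([c,d])=\int_{\gamma([c,d])}\#(\gamma|_{[c,d]})^{-1}(x)\,\mathcal{J}_\Gamma h(x)\,d\mathcal{H}^1(x),
\]
from which $\mathcal{J}_\Gamma h=1$ $\mathcal{H}^1$-a.e.\ immediately yields $V_{h\circ\gamma}([c,d])=\int\#(\gamma|_{[c,d]})^{-1}\,d\mathcal{H}^1=V_\gamma([c,d])$.

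Second, invoking (\textbf{ST}4)--(\textbf{ST}5) to secure $\gamma\in BV$ \emph{before} applying Bate is redundant. The paper proceeds in the tighter order: once $\mathcal{H}^1(\Gamma)<\infty$ and $\Gamma$ is $1$-rectifiable, Bate's theorem already supplies a single $h\in S$, and the identity above gives $V_\gamma([a,b])=V_{h\circ\gamma}([a,b])<\infty$ directly. So $\gamma\in BV$ is a \emph{consequence} of the residuality statement, not a prerequisite for it.
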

While the use of a machinery from \cite{Bate} in combination with our (\textbf{ST}1)--(\textbf{ST}3)
makes the proof of Theorem \ref{Th.main_Bate} quite short, the result can be considered as an ``existence theorem''. Indeed, the proof of the
crucial Theorem 7.6 in \cite{Bate} is quite complicated and not fully constructive.

\textit{The second} ``direct way'' is longer but more transparent.
We believe that it can be interesting in itself.
In contrast to \cite{Bate}, using quite elementary methods,
we present an explicit elementary
procedure for construction of a ``good sawtooth'' Lipschitz function concentrated on the image $\Gamma=\gamma([a,b])$ of a given rectifiable curve $\gamma \in C([a,b],\operatorname{X})$.
The Jacobian (with respect to $\Gamma$) of that function
equals $1$ on a set of an ``almost full measure''.
Despite the fact that this result is weaker than Theorem \ref{Th.main_Bate}, it is sufficient for
the proof of Theorem \ref{Th.main_1}.

The base strategy for steps (\textbf{ST}2) and (\textbf{ST}5)--(\textbf{ST}7) is as follows.
We assume that some regularity property of a given curve fails and then we construct a concrete function $h \in \operatorname{LIP}(\operatorname{X})$ such that
the corresponding regularity property fails for the post-composition $h \circ \gamma$. We should underline that the use of direct and explicit methods is a specific feature
of the present paper. In some places we could use Banach--Steinhaus-type arguments to make the corresponding proofs shorter.
However, we instead construct explicit examples ``by hands''. We believe that our approach is more suitable
for possible applications.

\textit{The second main} result concerns Sobolev maps and has a similar flavor. In other words, we present a characterization
of Sobolev curves via Lipschitz post-compositions.
However, in contrast to the previous case, some reasonable restrictions on $\operatorname{X}$ should be imposed.
Indeed, if $\gamma \in C([a,b],\operatorname{X})$, then one need not
work with the whole space $\operatorname{X}$ but rather with the set $\Gamma:=\gamma([a,b]) \subset \operatorname{X}$, which becomes a complete separable metric space
after the restriction of the metric $\operatorname{d}$ to $\Gamma$. In contrast to this case, given a Sobolev element $[\gamma] \in W_{p}^{1}([a,b], \operatorname{X})$, a generic representative $\widetilde{\gamma}$ of $[\gamma]$
can have a ``bad set'' $E \subset [a,b]$
with $\mathcal{L}^{1}(E)=0$, on which its values cannot be controlled in any reasonable way. In particular, the image $\widetilde{\gamma}(E)$ can have infinite $\mathcal{H}^{1}$-measure in the space $\operatorname{X}$.
Hence, to avoid some delicate technical details, all results in this paper involving Sobolev spaces will be proved under completeness and separability assumptions of $\operatorname{X}$.

The crucial ingredient for the characterization of the Sobolev curves via Lipschitz postcompositions is the following result which may be interesting in itself.
\begin{Th}
\label{Th.representative}
Let $\gamma:[a,b] \to \operatorname{X}$ be a Borel map such that $[h\circ\gamma] \cap C([a,b]) \neq \emptyset$ for every function $h \in \operatorname{LIP}(\operatorname{X})$. Then $[\gamma] \cap C([a,b],\operatorname{X}) \neq \emptyset$.
\end{Th}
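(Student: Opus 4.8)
The plan is to produce a continuous representative of $[\gamma]$ by combining a countable family of continuous representatives of the post-compositions $[h\circ\gamma]$ with a separability/embedding argument for the image. First I would fix a countable dense set $\{x_n\}_{n\in\mathbb{N}}$ in the (a priori only separable) essential range of $\gamma$; more precisely, since $\gamma$ is Borel and the hypothesis forces (via the previous results, or directly) that $\gamma$ essentially takes values in a separable subset $Y\subset\operatorname{X}$, I would let $\{x_n\}$ be dense in $Y$ and set $h_n(x):=\operatorname{d}(x,x_n)$, which lies in $\operatorname{LIP}(\operatorname{X})$ with Lipschitz constant $1$. By hypothesis each $[h_n\circ\gamma]$ contains a continuous representative $f_n\in C([a,b])$, and there is a common Borel set $N\subset[a,b]$ with $\mathcal{L}^1(N)=0$ such that $f_n(t)=\operatorname{d}(\gamma(t),x_n)$ for all $t\in[a,b]\setminus N$ and all $n$.

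The next step is to show that on $[a,b]\setminus N$ the map $\gamma$ is already uniformly continuous in a way witnessed by the $f_n$'s, so that it extends continuously across $N$. The key point is the Kuratowski-type embedding: for $s,t\in[a,b]\setminus N$ one has
\begin{equation}
\label{eqq.sup_rep}
\operatorname{d}(\gamma(s),\gamma(t))=\sup_{n\in\mathbb{N}}\bigl|f_n(s)-f_n(t)\bigr|,
\end{equation}
because $\bigl|\operatorname{d}(\gamma(s),x_n)-\operatorname{d}(\gamma(t),x_n)\bigr|\le\operatorname{d}(\gamma(s),\gamma(t))$ always, while choosing $x_n$ close to $\gamma(s)$ makes the left side arbitrarily close to $\operatorname{d}(\gamma(s),\gamma(t))$. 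Now for each $t_0\in[a,b]$, I would define $\widetilde\gamma(t_0)$ as follows: pick any sequence $t_k\to t_0$ with $t_k\in[a,b]\setminus N$; the sequence $(\gamma(t_k))_k$ is Cauchy in $\operatorname{X}$ — for if not, by \eqref{eqq.sup_rep} some single $f_n$ would fail to be Cauchy along $(t_k)$, contradicting the continuity of $f_n$ at $t_0$ — hence, assuming $\operatorname{X}$ complete, it converges to a limit which, again by \eqref{eqq.sup_rep} and continuity of all the $f_n$, is independent of the chosen sequence. Call this limit $\widetilde\gamma(t_0)$. For $t_0\notin N$ we recover $\widetilde\gamma(t_0)=\gamma(t_0)$ by taking the constant sequence, so $\widetilde\gamma$ agrees with $\gamma$ off a null set, i.e.\ $\widetilde\gamma\in[\gamma]$.

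It remains to check $\widetilde\gamma\in C([a,b],\operatorname{X})$. This follows because $f_n\circ$-values extend: one shows $\operatorname{d}(\widetilde\gamma(s),x_n)=f_n(s)$ for every $s\in[a,b]$ and every $n$ (pass to the limit in $f_n(t_k)=\operatorname{d}(\gamma(t_k),x_n)$ using continuity of $f_n$ and continuity of $\operatorname{d}(\cdot,x_n)$), and then \eqref{eqq.sup_rep} holds for all $s,t\in[a,b]$, whence $\operatorname{d}(\widetilde\gamma(s),\widetilde\gamma(t))=\sup_n|f_n(s)-f_n(t)|$ is continuous in $(s,t)$ as a locally uniform supremum of continuous functions (each $f_n$ is $1$-Lipschitz-controlled after localization, or one argues directly that the sup of equicontinuous functions is continuous — here equicontinuity at a point comes from $|f_n(s)-f_n(t)|\le\operatorname{d}(\gamma(s),\gamma(t))$ plus the already-established continuity of the right-hand side on the dense set). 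The main obstacle I anticipate is the very first point: justifying that $\gamma$ essentially lands in a separable subset of $\operatorname{X}$, since $\operatorname{X}$ is not assumed separable in this theorem. I would handle this by noting that a Borel map from $[a,b]$ into a metric space has separable essential range (image of a second-countable space under a Borel map, modulo a null set — or invoke that $h\circ\gamma$ being a.e.\ equal to a continuous function for the distance functions $h_n$ pins down $\gamma$ a.e.\ inside the closed separable set generated by a countable dense sequence of the essential range), so the countable family $\{h_n\}$ above genuinely controls $\operatorname{d}(\gamma(s),\gamma(t))$ for a.e.\ $s,t$, which is all that is needed.
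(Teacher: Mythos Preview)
Your approach has a genuine gap at the Cauchy step. You claim that if $t_k\to t_0$ with $t_k\notin N$ and $(\gamma(t_k))_k$ is not Cauchy, then ``by \eqref{eqq.sup_rep} some single $f_n$ would fail to be Cauchy along $(t_k)$.'' This inference is not valid: from
\[
\operatorname{d}(\gamma(t_k),\gamma(t_l))=\sup_{n}|f_n(t_k)-f_n(t_l)|
\]
you only get that for each \emph{pair} $(k,l)$ there is an index $n=n(k,l)$ with $|f_{n}(t_k)-f_{n}(t_l)|$ large; the witnessing $n$ may change with the pair, so no single $f_n$ need fail continuity at $t_0$. Your later attempt to recover equicontinuity of $\{f_n\}$ from the bound $|f_n(s)-f_n(t)|\le\operatorname{d}(\gamma(s),\gamma(t))$ is circular, since continuity of the right-hand side is exactly what you are trying to prove. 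In Kuratowski--embedding language: you are asserting that coordinatewise continuity of $t\mapsto (f_n(t))_n\in\ell^\infty$ implies norm continuity, which is false without equicontinuity.

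The missing ingredient is \emph{precompactness of the essential range of $\gamma$}. Once you know that $\gamma([a,b]\setminus N)$ is precompact, your argument does go through: from a non-Cauchy sequence $(\gamma(t_k))$ you extract subsequences $\gamma(a_m)\to p$, $\gamma(b_m)\to q$ with $\operatorname{d}(p,q)\ge\varepsilon$, and then any $x_n$ close to $p$ gives a single $f_n$ that is discontinuous at $t_0$. But precompactness cannot be obtained from the countably many distance functions $h_n=\operatorname{d}(\cdot,x_n)$ alone; it genuinely requires the hypothesis for \emph{all} $h\in\operatorname{LIP}(\operatorname{X})$. This is precisely what the paper's Proposition~\ref{Prop.4.3} accomplishes: it shows that the continuous representatives $g_{\epsilon,i}$ of $\operatorname{d}(x_{\epsilon,i},\gamma(\cdot))$ cover $[a,b]$ by preimages of $(-\underline{q}\epsilon,\underline{q}\epsilon)$, and the proof of that proposition constructs, under the assumption that this covering fails, a specific $1$-Lipschitz $\underline{h}$ (an alternating $\pm\tfrac{(q-1)\epsilon}{2}$ function on a well-separated family of image pieces) whose post-composition has no continuous representative. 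The paper then uses this covering to get precompactness (Step~1 of the proof of Theorem~\ref{Th.representative}), passes to a \emph{finite} $\underline{\epsilon}/8$-net, and pigeonholes to find a single Lipschitz function witnessing discontinuity. A minor side remark: contrary to your closing paragraph, $\operatorname{X}$ \emph{is} assumed complete and separable throughout Section~6, so separability of the essential range is not an obstacle---the real obstacle is precompactness.
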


\textit{The second main result} of the present paper reads as follows.
\begin{Th}
\label{Th.main_2}
Let $\operatorname{X}=(\operatorname{X},\operatorname{d})$ be a complete separable metric space. Given $p \in (1,\infty)$, a
map $\gamma:[a,b] \to \operatorname{X}$ is Borel with the equivalence class $[\gamma]$ in $W_{p}^{1}([a,b],\operatorname{X})$ if and only if $h \circ \gamma$ is Borel and $[h\circ\gamma] \in W_{p}^{1}([a,b],\mathbb{R})$ for every $h \in \operatorname{LIP}(\operatorname{X})$.
\end{Th}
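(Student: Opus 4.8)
The necessity is immediate from the definition recalled in the introduction: if $[\gamma]\in W_{p}^{1}([a,b],\operatorname{X})$ then $\gamma$ is Borel and $[h\circ\gamma]\in W_{p}^{1}([a,b])$ for every $h\in\operatorname{LIP}(\operatorname{X})$, while $h\circ\gamma$ is Borel because $h$ is continuous. So the whole content lies in the sufficiency, and the plan is to reduce it entirely to Theorem \ref{Th.representative} and Theorem \ref{Th.main_1}. A preliminary remark: if $h\circ\gamma$ is Borel for every $h\in\operatorname{LIP}(\operatorname{X})$, then, testing with the $1$-Lipschitz functions $x\mapsto\operatorname{d}(x,x_{0})$ for $x_{0}$ ranging over a countable dense subset of $\operatorname{X}$ and using separability (every open set is a countable union of balls), one sees that $\gamma$ itself is Borel; hence the left-hand side of the claimed equivalence is meaningful.

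\textit{Step 1 (a continuous representative).} Assume $h\circ\gamma$ is Borel and $[h\circ\gamma]\in W_{p}^{1}([a,b])$ for every $h\in\operatorname{LIP}(\operatorname{X})$. By the classical one-dimensional Sobolev embedding $W_{p}^{1}([a,b])\hookrightarrow C([a,b])$ (valid for every $p\in[1,\infty]$, a fortiori for our $p\in(1,\infty)$), each class $[h\circ\gamma]$ contains a continuous representative, i.e.\ $[h\circ\gamma]\cap C([a,b])\neq\emptyset$ for every $h\in\operatorname{LIP}(\operatorname{X})$. Theorem \ref{Th.representative} then produces a continuous map $\widetilde{\gamma}\in[\gamma]\cap C([a,b],\operatorname{X})$.

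\textit{Step 2 (from $W^1_p$ post-compositions to an $AC_p$ curve, and back).} Fix $h\in\operatorname{LIP}(\operatorname{X})$. Since $h$ is continuous and $\widetilde{\gamma}=\gamma$ $\mathcal{L}^{1}$-a.e., the function $h\circ\widetilde{\gamma}$ is continuous and lies in the class $[h\circ\gamma]\in W_{p}^{1}([a,b])$; thus $h\circ\widetilde{\gamma}$ is \emph{the} continuous representative of a one-dimensional $W_{p}^{1}$-class, hence absolutely continuous with $\frac{d(h\circ\widetilde{\gamma})}{dt}\in L_{p}([a,b])$, that is $h\circ\widetilde{\gamma}\in AC_{p}([a,b])$. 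As this holds for all $h\in\operatorname{LIP}(\operatorname{X})$, Theorem \ref{Th.main_1} gives $\widetilde{\gamma}\in AC_{p}([a,b],\operatorname{X})$. It remains to observe that this forces $[\gamma]=[\widetilde{\gamma}]\in W_{p}^{1}([a,b],\operatorname{X})$, which is the routine inclusion $AC_{p}\subset W_{p}^{1}$: being $p$-absolutely continuous, $\widetilde{\gamma}$ is continuous on the compact $[a,b]$, hence Borel and bounded, so $[\widetilde{\gamma}]\in L_{\infty}([a,b],\operatorname{X})\subset L_{p}([a,b],\operatorname{X})$; by Lemma \ref{Lm.elementary_characterization}, $h\circ\widetilde{\gamma}\in AC_{p}([a,b])\subset W_{p}^{1}([a,b])$ for every $h\in\operatorname{LIP}(\operatorname{X})$; and, taking a Borel version $G$ of the metric speed $|\dot{\widetilde{\gamma}}|\in L_{p}([a,b])$, inequality \eqref{eqq.AC_ineq} becomes
\[
\bigl|\partial[h\circ\widetilde{\gamma}](t)\bigr|=\Bigl|\tfrac{d(h\circ\widetilde{\gamma})}{dt}(t)\Bigr|\le\operatorname{lip}h(\widetilde{\gamma}(t))\,[G](t)=[\operatorname{lip}(h\circ\widetilde{\gamma})](t)\,[G](t)\quad\text{for }\mathcal{L}^{1}\text{-a.e. }t\in[a,b],
\]
which is exactly \eqref{eqq.Sobolev_ineq} with this $G$ as a $p$-weak upper gradient of $[\widetilde{\gamma}]$. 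Hence $[\gamma]\in W_{p}^{1}([a,b],\operatorname{X})$.

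\textit{On the main obstacle.} Once Theorems \ref{Th.representative} and \ref{Th.main_1} are in hand, the argument above is essentially bookkeeping. The genuine difficulty is encapsulated in Theorem \ref{Th.representative}: extracting a continuous representative of $[\gamma]$ from purely qualitative information about the post-compositions is where completeness and separability of $\operatorname{X}$ are really needed — without them one could imagine a Borel curve that is ``Sobolev along every Lipschitz post-composition'' yet has no reasonable representative on a null set (cf.\ the discussion preceding Theorem \ref{Th.representative}). A secondary, purely notational point is that the slope term in \eqref{eqq.Sobolev_ineq} along the continuous curve $\widetilde{\gamma}$ coincides with $t\mapsto\operatorname{lip}h(\widetilde{\gamma}(t))$, which is already built into the form of Lemma \ref{Lm.elementary_characterization}.
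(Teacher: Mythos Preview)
Your proof is correct and follows essentially the same route as the paper's: Borel measurability of $\gamma$, then Theorem~\ref{Th.representative} to obtain a continuous representative $\widetilde{\gamma}$, then the identification of $h\circ\widetilde{\gamma}$ as the continuous representative of $[h\circ\gamma]\in W_{p}^{1}([a,b])$, and finally Theorem~\ref{Th.main_1}. The only cosmetic differences are that the paper cites Lemma~\ref{Lm.Borel_regularity} for Borel measurability and Proposition~\ref{Prop.Sobolev_characterization} (the equivalence $(W1)\Leftrightarrow(W2)$) for the passage from $AC_{p}$ back to $W_{p}^{1}$, whereas you unpack both of these directly.
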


As far as we know, the problems studied in the present paper
have never been considered in the literature (at least at such a high generality).
However, very recently Professor V.~I.~Bakhtin proved Theorem \ref{Th.main_1} in the case $p=1$ \cite{Bah}.
He had known about our preliminary results from the talk of the second named author made at Belarusian State University at the end of April 2024.
At that time, the authors proved Theorem \ref{Th.main_1} in the particular case $p=1$
under the metrically doubling assumption on $\operatorname{X}$. A little bit later, the authors and Professor V.~I.~Bakhtin
independently and almost simultaneously realized that the doubling assumption on $\operatorname{X}$ can be removed.
Note that Bakhtin's proof is very elegant, short and fully direct. It differs from both (the current and the initial) proofs given by the authors. 
Moreover, his proof does not rely on any technique from the theory of rectifiable sets in metric spaces.
On the other hand, we do not know, whether that approach can be used to cover the case $p > 1$. Furthermore, the characterization of Sobolev-type
regularity of curves via Lipschitz post-compositions was not considered in \cite{Bah}.

{\bf Acknowledgements.}
First of all, we express our deep gratitude to Professor S.~K.\ Vodop'yanov whose beautiful results  and talks on composition operators in
Sobolev spaces inspired us to attack the problems considered in the present paper. We also grateful to Professors L.~Ambrosio and N.~Gigli for important conversations
concerning metric-valued curves and rectifiability of sets in metric spaces. In particular, Professor L.~Ambrosio kindly informed us that the result about $1$-rectifiability used in (\textbf{ST}3) is
valid in arbitrary metric spaces and shared the precise reference. Finally, we are grateful to Professors R.~N.~Karasev and D.~M.~Stolyarov for fruitful discussions of (\textbf{ST}2),
which helped us to simplify some considerations.

\section{Geometric measure theory background and notation.}

In this section we gather some well-known concepts and facts from the contemporary Geometric Analysis, which
will be building blocks for subsequent exposition.

\subsection{Metric spaces and measures}
Given a nonempty metric space $(\operatorname{X},\operatorname{d})$, by \textit{a ball} $B$ in $\operatorname{X}$ we always mean \textit{an open ball}
with a priori given center and radius. For each ball $B \subset \operatorname{X}$, we let $\overline{B}$ denote the closed ball with the same center and radius.
More precisely,
$$
B=B_{r}(x):=\{y \in \operatorname{X}: \operatorname{d}(x,y) < r\}, \quad \overline{B}:=\overline{B}_{r}(x):=\{y \in \operatorname{X}: \operatorname{d}(x,y) \le r\}
$$
for some $x \in \operatorname{X}$ and $r > 0$. For each set $E \subset \operatorname{X}$, the symbols $\operatorname{cl}E$ and $\operatorname{int}E$ denote the closure and the interior
of $E$, respectively.

The \textit{diameter} of a set $E \subset \operatorname{X}$ is defined by $\operatorname{diam}E:=\sup\{\operatorname{d}(x,y):x,y \in E\}$, as usual.
Given a nonempty set $E \subset \operatorname{X}$ and a parameter $\epsilon > 0$,
we say that $\mathcal{N}$ is an \textit{$\epsilon$-separated subset} of $E$ if $\operatorname{d}(x,y) \geq \epsilon$ for all $x,y \in \mathcal{N}$ with $x \neq y$.
An $\epsilon$-separated subset $\mathcal{N}$ of a set $E \subset \operatorname{X}$ is said to be \textit{maximal} if $\operatorname{dist}(x,\mathcal{N}) < \epsilon$ for every $x \in E$.

Given a metric space $(\operatorname{X},\operatorname{d})$ and a nonempty set $E \subset \operatorname{X}$, we will sometimes consider $E$ as a metric space
whose metric $\operatorname{d}_{E}$ is the restriction of the ambient metric $\operatorname{d}$ to $E$.

If $E$ is an abstract set, then by $\# E$ we denote its cardinality, i.e., the number of different elements in $E$. In other words,
$\# E =0$ if and only if $E = \emptyset$, and $\# E = +\infty$ if and only if $E$ is an infinite set.

Given metric spaces $\operatorname{X}=(\operatorname{X},\operatorname{d})$ and $\operatorname{Y}=(\operatorname{Y},\rho)$, by $\mathfrak{B}(\operatorname{X},\operatorname{Y})$ we denote the
set of all Borel maps $F$ from $\operatorname{X}$ to $\operatorname{Y}$. We also put $\mathfrak{B}(\operatorname{X}):=\mathfrak{B}(\operatorname{X},\mathbb{R})$ for brevity.
Given a map $F:\operatorname{X} \to \operatorname{Y}$ and a nonempty set $E \subset \operatorname{X}$, by $F|_{E}$ we denote the pointwise restriction of $F$ to $E$.

The symbol $\mathcal{L}^{n}$, $n \in \mathbb{N}$, denotes the classical $n$-dimensional Lebesgue measure in $\mathbb{R}^{n}$, as usual.
Let $\operatorname{X}=(\operatorname{X},\operatorname{d})$ be a metric space, $E \subset \operatorname{X}$ be an arbitrary subset of $\operatorname{X}$, and $d \geq 0$.
For each $\delta >0$, the \textit{$d$-Hausdorff content} of $E$ at the scale $\delta$ is defined by
\begin{equation}
\label{eqq.1}
\mathcal{H}^{d}_{\delta}(E):=\inf\sum\limits_{k=1}^{\infty}\Bigl(\operatorname{diam}U_{k}\Bigr)^{d},
\end{equation}
where the infimum is taken over all countable coverings $\mathcal{U}=\{U_{k}\}_{k=1}^{\infty}$ of $E$ by arbitrary sets $U_{k} \subset \operatorname{X}$ such that $\operatorname{diam}U_{k} < \delta$ for all $k \in \mathbb{N}$.
The \textit{$d$-Hausdorff measure} of $E$ is defined by
\begin{equation}
\label{eqq.2}
\mathcal{H}^{d}(E):=\sup\limits_{\delta > 0}\mathcal{H}^{d}_{\delta}(E)=\lim\limits_{\delta \to +0}\mathcal{H}^{d}_{\delta}(E).
\end{equation}

\textbf{Important notation.}
Given a metric space $\operatorname{Y}=(\operatorname{Y},\rho)$ and a map $f \in \mathfrak{B}([a,b],\operatorname{Y})$, by $[f]$ we denote the equivalence
class (up to $\mathcal{L}^{1}$-a.e.\ equality on $[a,b]$) of $f$. More precisely,
\begin{equation}
\label{eqq.equivalence_class}
[f]:=\{\widetilde{f}: [a,b] \to \operatorname{Y}:\mathcal{L}^{1}(\{t \in [a,b]: \widetilde{f}(t) \neq f(t)\})=0\}.
\end{equation}
Throughout the paper we will always \textit{distinguish between} a map and its equivalence class. Furthermore, given
$f_{1},f_{2} \in \mathfrak{B}([a,b],\operatorname{Y})$, the formula
$$
[f_{1}](t) = [f_{2}](t) \quad \hbox{for $\mathcal{L}^{1}$-a.e. $t \in [a,b]$}
$$
\textit{should be interpreted} in such a way that, for every representatives $\widetilde{f}_{1} \in [f_{1}]$ and
$\widetilde{f}_{2} \in [f_{2}]$, the equality $\widetilde{f}_{1}(t)=\widetilde{f}_{2}(t)$ holds true
for $\mathcal{L}^{1}$-a.e. $t \in [a,b]$. In the case $\operatorname{Y}=(\mathbb{R},|\cdot|)$, a \textit{similar interpretation}
of the ``inequalities'' for the equivalence classes of functions should be used.

Keeping in mind the above notation, we will be very careful with the usual Lebesgue spaces. More precisely, for each
$p \in [1,\infty)$, by $\widetilde{L}_{p}([a,b])$ we mean the linear space of all functions $f \in \mathfrak{B}([a,b])$
such that $\int_{a}^{b}|f(t)|^{p}\,dt < +\infty$. By $\widetilde{L}_{\infty}([a,b])$ we denote the linear space of all essentially bounded
Borel functions. Given a metric space $(\operatorname{Y},\rho)$ and a parameter $p \in [1,\infty]$, by $\widetilde{L}_{p}([a,b],\operatorname{X})$ we denote the
set of all maps $f \in \mathfrak{B}([a,b],\operatorname{Y})$ such that $\rho(\underline{y},f(\cdot)) \in \widetilde{L}_{p}([a,b])$ for some (and hence every) $\underline{y} \in \operatorname{Y}$.
Finally, for each $p \in [1,\infty]$, we put
$$
L_{p}([a,b],\operatorname{Y}):=\{[f]:f \in \widetilde{L}_{p}([a,b],\operatorname{Y})\}
$$
and equip the corresponding spaces with the usual metric (norm in the case $\operatorname{Y}=(\mathbb{R},|\cdot|)$).

\subsection{Curves and variations}

Given a closed interval $[a,b]$, by \textit{a partition of $[a,b]$} we always mean a finite linearly ordered subset of $[a,b]$ starting at $a$ and ending at $b$. More precisely,
we put $\operatorname{T}([a,b]):=\{t_{i}\}_{i=0}^{N}$, $N \in \mathbb{N}$, and $a=t_{0} < ... <t_{N}=b$. By $\mathcal{T}([a,b])$ we denote the family of all partitions of $[a,b]$.
If $\operatorname{T}=\{t_{i}\}_{i=0}^{N} \in \mathcal{T}([a,b])$, then
$$
\delta(\operatorname{T}):=\max\{|t_{i}-t_{i-1}|:i=1,...,N\}.
$$

Given two partitions $\operatorname{T}^{1}=\{t^{1}_{i}\}_{i=0}^{N^{1}} \in \mathcal{T}([a,b])$
and $\operatorname{T}^{2}:=\{t^{2}_{i}\}_{i=0}^{N^{2}} \in \mathcal{T}([a,b])$, \textit{their union} $\operatorname{T}^{1} \cup \operatorname{T}^{2}$ is defined
as a partition $\operatorname{T} \in \mathcal{T}([a,b])$ obtained as the union $\{t^{1}_{i}\}_{i=0}^{N^{1}} \cup \{t^{2}_{i}\}_{i=0}^{N^{2}}$ equipped with the corresponding
linear order.
We will use the following \textit{partial order} on $\mathcal{T}([a,b])$. Given  $\operatorname{T}^{1}=\{t^{1}_{i}\}_{i=0}^{N^{1}} \in \mathcal{T}([a,b])$
and $\operatorname{T}^{2}:=\{t^{2}_{i}\}_{i=0}^{N^{2}} \in \mathcal{T}([a,b])$, we write $\operatorname{T}^{1} \preceq \operatorname{T}^{2}$ if $\{t^{1}_{i}\}_{i=0}^{N^{1}} \subset \{t^{2}_{i}\}_{i=0}^{N^{2}}$.

\begin{Def}
\label{Def.2}
By \textit{a curve in} $\operatorname{X}$ we always mean a map $\gamma \in C([a,b],\operatorname{X})$ for some $a < b$. We say that a curve $\gamma$ is \textit{simple} if the map $\gamma:[a,b] \to \operatorname{X}$
is injective.
\end{Def}

\textbf{Important notation.} Given a curve $\gamma \in C([a,b],\operatorname{X})$, we put $\Gamma:=\gamma([a,b])$.
If $\gamma$ is a simple rectifiable curve and $x,y \in \Gamma$, then $\gamma_{x,y}$ denotes the subcurve of $\gamma$ joining  $x$ and $y$.

\begin{Def}
\label{Def.1}
Given a metric space $\operatorname{Y}=(\operatorname{Y},\rho)$ and a map $g:[a,b] \to \operatorname{Y}$, for each partition $\operatorname{T}=\{t_{i}\}_{i=0}^{N} \in \mathcal{T}([a,b])$,
the \textit{variation
of $g$ over $\operatorname{T}$} is defined by
\begin{equation}
\label{eq.2}
V_{g}(\operatorname{T}):=\sum\limits_{i=1}^{N}\rho(g(t_{i-1}),g(t_{i})).
\end{equation}
The \textit{variation of $g$ on $[a,b]$} is defined by
\begin{equation}
\label{eq.2'}
V_{g}([a,b]):=\sup\limits_{\operatorname{T} \in \mathcal{T}([a,b])}V_{g}(\operatorname{T}).
\end{equation}
A map $g$ is \textit{of bounded variation} (written $g \in BV([a,b],\operatorname{Y})$) if $V_{g}([a,b]) < +\infty$.
\end{Def}

In the case $\operatorname{Y}=(\mathbb{R},|\cdot|)$, we use notation $BV([a,b])$ omitting the target space.

\begin{Remark}
\label{Rem.monotonicity_variation}
By the triangle inequality,  $V_{g}(\operatorname{T}^{1}) \preceq V_{g}(\operatorname{T}^{2})$ for every $\operatorname{T}^{1},\operatorname{T}^{2} \in \mathcal{T}([a,b])$ with $\operatorname{T}^{1} \preceq \operatorname{T}^{2}$.
\end{Remark}

\begin{Def}
\label{Def.3}
We say that a curve $\gamma \in C([a,b],\operatorname{X})$ is \textit{rectifiable}, if $\gamma \in BV([a,b],\operatorname{X})$.
In this case, by \textit{the length of} $\gamma$ we mean its variation on $[a,b]$ and denote it by $l(\gamma)$.
\end{Def}

\begin{Remark}
\label{Rem.1}
It is well known (see, for example,  \S 2.7 in \cite{BBI}) that if $\gamma \in C([a,b],\operatorname{X})$ is a simple curve, then $\mathcal{H}^{1}(\Gamma)=l(\gamma)$ (the equality is considered in the range
$[0,+\infty]$). If $\gamma$ is rectifiable and
$\gamma_{s}$ its arc length parametrization, then this fact can be used  to show that, given a set $E \subset \Gamma$,  $\mathcal{H}^{1}(E)=0$ if and only if $\mathcal{L}^{1}(\gamma^{-1}_{s}(E))=0$.
\end{Remark}

The following fact is probably a folklore. Since we are not able to give the precise reference, we present
the details (we recall the important notation).

\begin{Lm}
\label{Lm.inner_and_outer_metric}
Let $\gamma \in C([a,b],\operatorname{X})$ be a simple rectifiable curve. Then there exists a set $\underline{\Gamma} \subset \Gamma$ such that $\mathcal{H}^{1}(\Gamma \setminus \underline{\Gamma}) = 0$ and
\begin{equation}
\label{eqq.4.18}
\lim\limits_{\substack{x_{1},x_{2} \to x \\ x_{1},x_{2} \in \Gamma}}\frac{l(\gamma_{x_{1},x_{2}})}{\operatorname{d}(x_{1},x_{2})} = 1  \quad \hbox{for every} \quad x \in \underline{\Gamma}.
\end{equation}
\end{Lm}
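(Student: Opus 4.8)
The plan is to push everything onto the arc-length parametrization and then reduce the claim to a weak-type maximal estimate for the ``chord--arc excess''. First I would pass to the arc-length parametrization $\gamma_{s}\colon[0,L]\to\operatorname{X}$, $L:=l(\gamma)$, which for a simple rectifiable $\gamma$ is continuous, injective, $1$-Lipschitz, satisfies $V_{\gamma_{s}}([c,d])=d-c$ for all $c\le d$, and is a homeomorphism onto $\Gamma$; by Remark \ref{Rem.1}, a set $E\subset\Gamma$ is $\mathcal{H}^{1}$-null if and only if $\gamma_{s}^{-1}(E)$ is $\mathcal{L}^{1}$-null. Writing $x=\gamma_{s}(u)$, $x_{1}=\gamma_{s}(u_{1})$, $x_{2}=\gamma_{s}(u_{2})$ with $u_{1}<u<u_{2}$ (so that $\gamma_{x_{1},x_{2}}=\gamma_{s}|_{[u_{1},u_{2}]}$ and $l(\gamma_{x_{1},x_{2}})=u_{2}-u_{1}$), and introducing the excess $\mathcal{E}([c,d]):=(d-c)-\operatorname{d}(\gamma_{s}(c),\gamma_{s}(d))\in[0,d-c]$, one has
$$
\frac{l(\gamma_{x_{1},x_{2}})}{\operatorname{d}(x_{1},x_{2})}=\Bigl(1-\frac{\mathcal{E}([u_{1},u_{2}])}{u_{2}-u_{1}}\Bigr)^{-1},
$$
so it suffices to produce a full-measure set $\underline{U}\subset[0,L]$ on which $\mathcal{E}([u-h_{1},u+h_{2}])/(h_{1}+h_{2})\to0$ as $h_{1},h_{2}\to0^{+}$, and then take $\underline{\Gamma}:=\gamma_{s}(\underline{U})$.

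The two elementary properties of $\mathcal{E}$ used are consequences of the triangle inequality: monotonicity, $\mathcal{E}([c,d])\le\mathcal{E}([c',d'])$ whenever $[c,d]\subset[c',d']$, and superadditivity, $\sum_{i}\mathcal{E}([t_{i-1},t_{i}])\le\mathcal{E}([a,b])$ for every $\operatorname{T}=\{t_{i}\}\in\mathcal{T}([a,b])$ (equivalently $\sum_{i}\operatorname{d}(\gamma_{s}(t_{i-1}),\gamma_{s}(t_{i}))\ge\operatorname{d}(\gamma_{s}(a),\gamma_{s}(b))$). Monotonicity already reduces the statement to the symmetric case: with $h=\max(h_{1},h_{2})$ and $g(u,h):=\mathcal{E}([u-h,u+h])/(2h)$ one gets $\mathcal{E}([u-h_{1},u+h_{2}])/(h_{1}+h_{2})\le\mathcal{E}([u-h,u+h])/h=2\,g(u,h)$, so it is enough to show $g(u,h)\to0$ as $h\to0^{+}$ for $\mathcal{L}^{1}$-a.e.\ $u$. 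The last ingredient I would use is the standard fact that for a \emph{continuous} curve of finite variation the polygonal approximations converge to the length as the mesh tends to zero; in the present normalization this reads
$$
\varepsilon(\delta):=\sup\Bigl\{\textstyle\sum_{i}\mathcal{E}([t_{i-1},t_{i}])\colon\operatorname{T}=\{t_{i}\}\in\mathcal{T}([0,L]),\ \delta(\operatorname{T})<\delta\Bigr\}\longrightarrow0\quad(\delta\to0^{+}),
$$
which is immediate from $\sum_{i}\operatorname{d}(\gamma_{s}(t_{i-1}),\gamma_{s}(t_{i}))=V_{\gamma_{s}}(\operatorname{T})\to V_{\gamma_{s}}([0,L])=L$.

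The heart of the argument is a weak-type estimate. Fix $t>0$, $h_{0}>0$ and set $U_{h_{0},t}:=\{u\in(h_{0},L-h_{0})\colon\mathcal{E}([u-h,u+h])>2th\ \text{for some}\ h\in(0,h_{0})\}$. Choosing a witnessing $h_{u}<h_{0}$ for each such $u$ and applying the $5r$-covering lemma to $\{[u-h_{u},u+h_{u}]\}_{u\in U_{h_{0},t}}$ (radii bounded by $h_{0}$) yields a countable \emph{disjoint} subfamily $\{[u_{j}-h_{j},u_{j}+h_{j}]\}_{j}$ with $U_{h_{0},t}\subset\bigcup_{j}[u_{j}-5h_{j},u_{j}+5h_{j}]$. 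Completing any finite subfamily of these disjoint intervals to a partition of $[0,L]$ of mesh $<2h_{0}$ (subdividing the gaps) and using superadditivity of $\mathcal{E}$ together with the definition of $\varepsilon$ gives $\sum_{j}\mathcal{E}([u_{j}-h_{j},u_{j}+h_{j}])\le\varepsilon(2h_{0})$, so that
$$
\mathcal{L}^{1}(U_{h_{0},t})\le 10\sum_{j}h_{j}<\frac{5}{t}\sum_{j}\mathcal{E}([u_{j}-h_{j},u_{j}+h_{j}])\le\frac{5}{t}\,\varepsilon(2h_{0}).
$$
Since the set $\{u\in(0,L)\colon\limsup_{h\to0^{+}}g(u,h)>1/n\}$ is contained in $U_{1/m,1/n}$ for every $m$ large enough (depending on $u$), letting $m\to\infty$ and using $\varepsilon(2/m)\to0$ shows it is $\mathcal{L}^{1}$-null for each $n$; hence $g(u,h)\to0$ as $h\to0^{+}$ for a.e.\ $u$. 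Taking $\underline{U}$ to be this full-measure set and $\underline{\Gamma}:=\gamma_{s}(\underline{U})$, Remark \ref{Rem.1} gives $\mathcal{H}^{1}(\Gamma\setminus\underline{\Gamma})=0$, and the first-paragraph identity gives \eqref{eqq.4.18} for every $x\in\underline{\Gamma}$.

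The step I expect to be the main obstacle is exactly the passage from the (soft) covering/weak information about $\mathcal{E}$ to the pointwise a.e.\ statement. One cannot simply dominate $\mathcal{E}$ by a finite Borel measure and invoke Lebesgue differentiation: even for a circular subarc the smallest measure dominating $\mathcal{E}$ has a nontrivial absolutely continuous part, so such an argument would give nothing; and one cannot pass from $L^{1}$-smallness of $g(\cdot,h)$ to a.e.\ convergence by a subsequence argument, because no quantitative rate on $\varepsilon$ is available. It is the combination of the $5r$-covering lemma, the superadditivity of $\mathcal{E}$, and the \emph{uniform} convergence of polygonal lengths that makes the a.e.\ conclusion work with no quantitative input. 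A minor but genuinely used point is that the reduction in the first paragraph presupposes that $x$ lies on the subarc $\gamma_{x_{1},x_{2}}$, i.e.\ that $x_{1},x_{2}$ approach $x$ from the two sides along $\gamma$; the monotonicity step then absorbs the asymmetry between $h_{1}$ and $h_{2}$.
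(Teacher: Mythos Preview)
Your argument is correct. The weak-type estimate via the $5r$-covering lemma combined with the superadditivity of the chord--arc defect $\mathcal{E}$ and the uniform convergence $\varepsilon(\delta)\to0$ is a clean and fully self-contained way to obtain the a.e.\ conclusion; the passage from the weak estimate to the pointwise statement is handled properly by letting $h_{0}\to0$ at fixed level $t=1/n$.

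The paper takes a much shorter but less self-contained route: it simply quotes Theorem~2.7.4 of Burago--Burago--Ivanov, which for an arbitrary rectifiable curve gives the a.e.\ dichotomy ``metric speed zero \emph{or} arc/chord ratio tends to $1$'', and then observes that for the arc-length reparametrization $\gamma_{s}$ the first alternative is vacuous because $l(\gamma_{s}|_{[t_{0}-\varepsilon,t_{0}+\varepsilon']})=\varepsilon+\varepsilon'$ identically. The remaining step, transferring the $\mathcal{L}^{1}$-a.e.\ statement on $[0,L]$ to an $\mathcal{H}^{1}$-a.e.\ statement on $\Gamma$, is done exactly as you do it, via Remark~\ref{Rem.1} and the continuity of $\gamma_{s}^{-1}$. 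So the two proofs share the reduction to arc length and the final transfer step; what differs is the core a.e.\ argument, where you replace the black-box citation by an explicit Vitali-type weak $(1,1)$ bound for the ``chord--arc maximal function'' $u\mapsto\sup_{0<h<h_{0}}g(u,h)$. Your approach buys independence from the external reference and makes transparent \emph{why} continuity of $\gamma_{s}$ (through $\varepsilon(\delta)\to0$) is the only input needed; the paper's approach buys brevity.

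One remark on the caveat you flag at the end: the paper's own derivation (its equations \eqref{eqq.4.19}--\eqref{eqq.21}) also passes through $\varepsilon,\varepsilon'\to0^{+}$, i.e.\ through points on opposite sides of $t_{0}$, so it carries exactly the same implicit restriction you identify. Your monotonicity reduction already covers the case where one of the two points equals $x$ (take $h_{1}=0$, $h_{2}>0$ and bound $\mathcal{E}([u,u+h_{2}])/h_{2}\le2g(u,h_{2})$), and this is all that is actually used downstream (e.g.\ in Lemma~\ref{Lm.derivative_keystone_sequence} and Lemma~\ref{Lm.Jacobian_Lip_constant}); the purely ``same-side'' configuration with $u_{1}<u_{2}<u$ and $u_{2}-u_{1}\ll u-u_{1}$ is not covered by either proof and is not needed.
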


\begin{proof}
It is well known (see, for example, Theorem 2.7.4 in \cite{BBI}) that, given a rectifiable curve $\widetilde{\gamma}:[c,d] \to \operatorname{X}$, for $\mathcal{L}^{1}$-a.e. $t_{0} \in [c,d]$ we have
\begin{equation}
\label{eqq.4.19}
\hbox{either} \quad \varliminf\limits_{\varepsilon,\varepsilon' \to +0}\frac{l(\widetilde{\gamma}|_{[t_{0}-\varepsilon,t_{0}+\varepsilon']})}{\varepsilon + \varepsilon'}=0 \quad \hbox{or} \quad
\lim\limits_{\varepsilon,\varepsilon' \to +0}\frac{l(\widetilde{\gamma}|_{[t_{0}-\varepsilon,t_{0}+\varepsilon']})}{\operatorname{d}(\widetilde{\gamma}(t_{0}-\varepsilon),\widetilde{\gamma}(t_{0}+\varepsilon'))}=1.
\end{equation}

Since $\gamma$ is injective and rectifiable, there is an arc-length parametrization of $\gamma$, which
gives rise to a one to one map $\gamma_{s}$ from $[0,l(\gamma)]$ to $\Gamma$.
Let $\Gamma \ni x \to t(x) \in [0,l(\gamma)]$ be the inverse of $[0,l(\gamma)] \ni t \to \gamma_{s}(t) \in \Gamma$.
By the injectivity and continuity of $\gamma_{s}$, we have the crucial fact of continuity of its inverse. Namely, for every $x_{0} \in \Gamma$, we have
\begin{equation}
\label{eqq.4.20}
\lim\limits_{\substack{x \to x_{0} \\ x \in \Gamma}}|t(x)-t(x_{0})| = 0.
\end{equation}

Now we apply \eqref{eqq.4.19} with $\widetilde{\gamma}=\gamma_{s}$ and $[c,d]=[0,l(\gamma)]$. Let $\underline{E} \subset [0,l(\gamma)]$ be the set of all
$t_{0} \in [a,b]$ at which the second equality in \eqref{eqq.4.19} holds with $\widetilde{\gamma}$ replaced by $\gamma_{s}$ for every $t \in \underline{E}$. We put $\underline{\Gamma}:=\gamma_{s}(\underline{E})$.
Since $\mathcal{L}^{1}([0,l(\gamma)]\setminus \underline{E})=0$, by Remark \ref{Rem.1} we have $\mathcal{H}^{1}(\Gamma \setminus \underline{\Gamma})=0$.
Taking into account \eqref{eqq.4.20},  for each $\underline{x} \in \underline{\Gamma}$, we get (we use notation $t_{0}=t(\underline{x})$ and take into account change of variables property in the limit)
\begin{equation}
\begin{split}
\label{eqq.21}
&\lim\limits_{\substack{x_{1},x_{2} \to x \\ x_{1},x_{2} \in \Gamma}}\frac{l(\gamma_{x_{1},x_{2}})}{\operatorname{d}(x_{1},x_{2})} = \lim\limits_{\substack{x_{1},x_{2} \to x \\ x_{1},x_{2} \in \Gamma}}\frac{l(\gamma_{s}|_{[t(x_{1}),t(x_{2})]})}{\operatorname{d}(\gamma_{s}(t(x_{1})),\gamma_{s}(t(x_{2})))}\\
&= \lim\limits_{\varepsilon,\varepsilon' \to +0}\frac{l(\gamma_{s}|_{[t_{0}-\varepsilon,t_{0}+\varepsilon']})}{\operatorname{d}(\gamma_{s}(t_{0}-\varepsilon),\gamma_{s}(t_{0}+\varepsilon'))}=1.
\end{split}
\end{equation}

The lemma is proved.
\end{proof}

\subsection{Lipschitz maps}
In this subsection we gather some crucial concepts related to Lispchitz functions that will be very important in proving the main results of this paper.

\begin{Def}
\label{Def.3}
Given $L > 0$ and metric spaces $\operatorname{X}=(\operatorname{X},\operatorname{d})$, $\operatorname{Y}=(\operatorname{Y},\rho)$, a map $F:\operatorname{X} \to \operatorname{Y}$ is said to be \textit{$L$-Lipschitz}
provided that
\begin{equation}
\label{eqq.Lipschitz_map}
\rho(F(x),F(y)) \le L \operatorname{d}(x,y) \quad \hbox{for all} \quad x,y \in \operatorname{X}.
\end{equation}
We say that $F:\operatorname{X} \to \operatorname{Y}$ is \textit{Lipschitz} if it is $L$-Lipschitz for some $L > 0$.
Furthermore, by $L_{F}$ we denote the global Lipschitz constant of $F$, i.e., the least $L$ for which \eqref{eqq.Lipschitz_map} holds.
\end{Def}

The symbol $\operatorname{LIP}_{L}(\operatorname{X},\operatorname{Y})$
denotes the set of all $L$-Lipschitz maps from $\operatorname{X}$ to $\operatorname{Y}$.
We also put $\operatorname{LIP}(\operatorname{X},\operatorname{Y}):=\cup_{L > 0}\operatorname{LIP}_{L}(\operatorname{X},\operatorname{Y})$.
In what follows, we set $\operatorname{LIP}_{L}(\operatorname{X}):=\operatorname{LIP}_{L}(\operatorname{X},\mathbb{R})$,
$L > 0$ and $\operatorname{LIP}(\operatorname{X}):=\operatorname{LIP}(\operatorname{X},\mathbb{R})$ for brevity.

Let $\operatorname{K} \subset \operatorname{X}$ be a bonded nonempty set. By $\operatorname{LIP}^{\rm b}(\operatorname{K})$ we denote the linear space $\operatorname{LIP}(\operatorname{K})$ equipped with the $\sup$-norm, i.e.,
\begin{equation}
\label{eqq.sup_norm}
\|h\|_{\operatorname{LIP}^{\rm b}(\operatorname{K})}:=\sup\limits_{x \in \operatorname{K}}|h(x)|.
\end{equation}
Furthermore, by  $\operatorname{LIP}^{\rm str}(\operatorname{K})$ we mean the space $\operatorname{LIP}(\operatorname{K})$ equipped with the norm
\begin{equation}
\label{eqq.c_1_norm}
\|h\|_{\operatorname{LIP}^{\rm str }(\operatorname{K})}:=\|h\|_{\operatorname{LIP}^{\rm b}(\operatorname{K})}+L_{h}.
\end{equation}

\begin{Remark}
\label{Rem.Lip_Banach}
It is easy to see that $\operatorname{LIP}^{\rm b}(\operatorname{K})$ and $\operatorname{LIP}^{\rm str}(\operatorname{K})$ are Banach spaces.
\end{Remark}

Given a map $F \in \operatorname{LIP}(\operatorname{X},\operatorname{Y})$, we define
the \textit{local Lipschitz constant of $F$} by
\begin{equation}
\label{eqq.local_lip}
\operatorname{lip}F(x):=
\begin{cases}
\varlimsup\limits_{y \to x}\frac{\rho(F(x),F(y))}{\operatorname{d}(x,y)}, \quad \text{$x$ is an accumulation point};\\
0, \quad \text{otherwise}.
\end{cases}
\end{equation}

\begin{Remark}
\label{Rem.pointwise_Lipschitz}
Sometimes it will be convenient to deal with the \textit{pointwise Lipschitz constant.} More precisely, consider a Borel map $F:\operatorname{X} \to \operatorname{Y}$. For each $\underline{x} \in \operatorname{X}$, we put
\begin{equation}
\label{eqq.pointwise_Lipschitz}
\operatorname{Lip}F(\underline{x}):=\sup\limits_{y \neq \underline{x}}\frac{\rho(F(\underline{x}),F(y))}{\operatorname{d}(\underline{x},y)}.
\end{equation}
Clearly, given $L > 0$,  $F$ belongs to $\operatorname{LIP}_{L}(\operatorname{X},\operatorname{Y})$ if and only if $\operatorname{Lip}F(\underline{x}) \le L$ for all
$\underline{x} \in \operatorname{X}$ and
$$
L_{F}=\sup\limits_{\underline{x} \in \operatorname{X}}\operatorname{Lip}F(\underline{x}).
$$
\end{Remark}

The following lemma is obvious. We present the details for completeness.

\begin{Lm}
\label{Lm.1}
Let $\{h_{i}\} \subset \operatorname{LIP}_{L}(\operatorname{X})$ for some $L > 0$. Let $E \subset \operatorname{X}$ be a nonempty set with $\operatorname{diam}E < +\infty$, and let
a sequence $\{x_{i}\} \subset E$ be such that $M:=\sup_{i \in \mathbb{N}}|h_{i}(x_{i})| < +\infty$.
Then
$$
\sup\limits_{i \in \mathbb{N}}\sup\limits_{x \in E}|h_{i}(x)| \le M+L\operatorname{diam}E.
$$
\end{Lm}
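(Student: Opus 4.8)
The plan is to reduce everything to a single application of the triangle inequality together with the defining $L$-Lipschitz property of each $h_{i}$, so that no auxiliary machinery is needed. First I would fix an arbitrary index $i \in \mathbb{N}$ and an arbitrary point $x \in E$. Since the anchor point $x_{i}$ also lies in $E$, the triangle inequality in $\mathbb{R}$ gives
\begin{equation*}
|h_{i}(x)| \le |h_{i}(x_{i})| + |h_{i}(x) - h_{i}(x_{i})|.
\end{equation*}

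Next I would estimate the two summands separately. The first term is bounded by $M$ directly from the definition $M := \sup_{i \in \mathbb{N}}|h_{i}(x_{i})|$. For the second term, since $h_{i} \in \operatorname{LIP}_{L}(\operatorname{X})$, I apply the $L$-Lipschitz inequality $|h_{i}(x) - h_{i}(x_{i})| \le L\operatorname{d}(x,x_{i})$, and then, using that both $x$ and $x_{i}$ belong to $E$, I replace $\operatorname{d}(x,x_{i})$ by the uniform upper bound $\operatorname{diam}E$. Combining these yields $|h_{i}(x)| \le M + L\operatorname{diam}E$. Because the right-hand side depends on neither $x$ nor $i$, I may take the supremum over $x \in E$ and then over $i \in \mathbb{N}$ to obtain the asserted inequality; its finiteness is guaranteed by the hypotheses $M < +\infty$ and $\operatorname{diam}E < +\infty$.

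There is essentially no obstacle in this argument, since it is a one-line estimate; the only two points deserving a moment's care are that the anchor points $x_{i}$ are assumed to lie in $E$ (which is precisely what lets me bound $\operatorname{d}(x,x_{i})$ by $\operatorname{diam}E$ uniformly in $x$ and $i$), and that the Lipschitz constant $L$ is common to the whole family $\{h_{i}\}$ (which is what makes the resulting bound genuinely uniform in $i$). No completeness, separability, or rectifiability assumptions on $\operatorname{X}$ enter at any stage.
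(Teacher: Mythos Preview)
Your proof is correct and follows essentially the same approach as the paper: fix an index $i$, apply the triangle inequality together with the $L$-Lipschitz bound to get $|h_{i}(x)| \le M + L\operatorname{diam}E$, and then take suprema. The only cosmetic difference is that the paper writes the estimate with $\sup_{x\in E}$ already inside the chain of inequalities rather than fixing $x$ first, but the content is identical.
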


\begin{proof}
Given $i \in \mathbb{N}$, it follows from \eqref{eqq.Lipschitz_map} that
$$
\sup\limits_{x \in E}|h_{i}(x)| \le |h_{i}(x_{i})|+\sup\limits_{x \in E}|h(x)-h(x_{i})| \le M+L\sup\limits_{x \in E}\operatorname{d}(x,x_{i}) \le M+L\operatorname{diam}E.
$$
Since $i \in \mathbb{N}$ was chosen arbitrarily, the lemma follows.
\end{proof}

We also recall the classical McShane--Whitney extension lemma (see \S 4.1 in \cite{HKST} for details).

\begin{Lm}
\label{Lm.McShane_Whitney}
Let $\operatorname{X}=(\operatorname{X},\operatorname{d})$ be a metric space and let $E \subset \operatorname{X}$ be a nonempty set. Given $L > 0$, for each $\widetilde{h} \in \operatorname{LIP}_{L}(E)$,
there exists $h \in \operatorname{LIP}_{L}(\operatorname{X})$ such that $h|_{E}=\widetilde{h}$.
\end{Lm}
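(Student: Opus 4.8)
The plan is to exhibit the classical McShane inf-convolution extension explicitly and then verify its three required properties by one-line estimates based on the triangle inequality. Concretely, I would \emph{define}
$$
h(x) := \inf_{y \in E}\bigl(\widetilde{h}(y) + L\,\operatorname{d}(x,y)\bigr), \qquad x \in \operatorname{X},
$$
and claim that $h \in \operatorname{LIP}_{L}(\operatorname{X})$ with $h|_{E}=\widetilde{h}$. The dual formula $\sup_{y \in E}\bigl(\widetilde{h}(y) - L\,\operatorname{d}(x,y)\bigr)$ works equally well; the two produce, respectively, the smallest and the largest $L$-Lipschitz extension, but for the present statement either one suffices.

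First I would check that the infimum is a finite real number, so that $h$ is genuinely $\mathbb{R}$-valued. Fixing a reference point $y_{0} \in E$ (which exists since $E$ is nonempty), the $L$-Lipschitz bound $\widetilde{h}(y) \ge \widetilde{h}(y_{0}) - L\,\operatorname{d}(y,y_{0})$ together with the reverse triangle inequality $\operatorname{d}(x,y) \ge \operatorname{d}(y,y_{0}) - \operatorname{d}(x,y_{0})$ yields
$$
\widetilde{h}(y) + L\,\operatorname{d}(x,y) \ge \widetilde{h}(y_{0}) - L\,\operatorname{d}(x,y_{0}) \qquad \text{for every } y \in E,
$$
so the infimum is bounded below; taking $y = y_{0}$ shows it is bounded above by $\widetilde{h}(y_{0}) + L\,\operatorname{d}(x,y_{0})$. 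Hence $h(x) \in \mathbb{R}$ for every $x \in \operatorname{X}$.

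Next I would verify the two remaining properties. To see $h|_{E}=\widetilde{h}$, fix $x \in E$: choosing $y = x$ in the defining infimum gives $h(x) \le \widetilde{h}(x)$, while the $L$-Lipschitz property of $\widetilde{h}$ on $E$ gives $\widetilde{h}(y) + L\,\operatorname{d}(x,y) \ge \widetilde{h}(x)$ for every $y \in E$, whence $h(x) \ge \widetilde{h}(x)$. To see $h \in \operatorname{LIP}_{L}(\operatorname{X})$, fix $x_{1},x_{2} \in \operatorname{X}$; for every $y \in E$ the triangle inequality gives $\widetilde{h}(y) + L\,\operatorname{d}(x_{1},y) \le \bigl(\widetilde{h}(y) + L\,\operatorname{d}(x_{2},y)\bigr) + L\,\operatorname{d}(x_{1},x_{2})$, and taking the infimum over $y \in E$ yields $h(x_{1}) \le h(x_{2}) + L\,\operatorname{d}(x_{1},x_{2})$; interchanging $x_{1}$ and $x_{2}$ produces $|h(x_{1}) - h(x_{2})| \le L\,\operatorname{d}(x_{1},x_{2})$.

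There is no serious obstacle in this argument. The only point requiring a moment's care is the finiteness of the infimum, which is precisely where the nonemptiness of $E$ and the $L$-Lipschitz hypothesis on $\widetilde{h}$ are used; the agreement on $E$ and the global $L$-Lipschitz bound are then immediate consequences of the triangle inequality, so I expect the proof to be entirely elementary and short.
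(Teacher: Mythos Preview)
Your proof is correct and complete: this is precisely the classical McShane inf-convolution construction. The paper does not give its own proof but merely cites \S 4.1 of \cite{HKST}, where exactly this argument appears, so your approach coincides with the intended one.
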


\subsection{Rectifiable sets}
We recall the classical concept of rectifiability in metric spaces.
\begin{Def}
\label{Def_n_rectifiable}
Given a metric space $\operatorname{X}=(\operatorname{X},\operatorname{d})$ and a parameter $n \in \mathbb{N}$, an $\mathcal{H}^{n}$-measurable
set $E \subset \operatorname{X}$  is said to be \textit{$n$-rectifiable} if there exist a sequence $\{A_{i}\}_{i=1}^{\infty}$ of Borel
subsets of $\mathbb{R}^{n}$ and maps $f_{i} \in \operatorname{LIP}(A_{i},\operatorname{X})$, $i \in \mathbb{N}$, such that
$$
\mathcal{H}^{n}(E \setminus \bigcup\limits_{i=1}^{\infty}f_{i}(A_{i}))=0.
$$
\end{Def}

\begin{Remark}
\label{Rem.rectifiable_curve_is_a_rectifiable_set}
It is easy to show using the arc length parametrization that if $\gamma:[a,b] \to \operatorname{X}$ is a rectifiable curve, then
the set $\Gamma$ is $1$-rectifiable in the sense of Definition \ref{Def_n_rectifiable}.
\end{Remark}

The crucial for the sequel fact concerning rectifiable sets is as follows (see Theorem 9 in \cite{Kirk} and the remark after the theorem).
\begin{Prop}
\label{Prop_density}
Let $\operatorname{X}=(\operatorname{X},\operatorname{d})$ be  a complete metric space. If $E \subset \operatorname{X}$  is $n$-rectifiable, then
\begin{equation}
\lim\limits_{r \to 0}\frac{\mathcal{H}^{n}(B_{r}(x) \cap E)}{(2r)^{n}}=1 \qquad \hbox{for $\mathcal{H}^{n}$-a.e. $x \in E$}.
\end{equation}
\end{Prop}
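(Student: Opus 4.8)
The plan is to follow the classical blueprint for density theorems for rectifiable sets, with Kirchheim's metric differential of a Lipschitz map $\mathbb{R}^{n}\to\operatorname{X}$ playing the role of the classical differential. Since $\mathcal{H}^{n}$-null sets affect neither the ratio $\mathcal{H}^{n}(B_{r}(x)\cap E)/(2r)^{n}$ nor an ``$\mathcal{H}^{n}$-a.e.'' conclusion, we discard them freely. We also assume, in accordance with \cite{Kirk}, that $\mathcal{H}^{n}$ restricted to $E$ is locally finite (this is automatic when $\mathcal{H}^{n}(E)<\infty$, in particular for images of rectifiable curves, to which the proposition is applied below). First, by Definition \ref{Def_n_rectifiable} and subdivision, write $E$ modulo an $\mathcal{H}^{n}$-null set as $\bigcup_{i}f_{i}(A_{i})$ with $A_{i}\subset\mathbb{R}^{n}$ bounded Borel and $f_{i}$ Lipschitz, and disjointify by $E_{i}:=f_{i}(A_{i})\setminus\bigcup_{j<i}f_{j}(A_{j})$. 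By Kirchheim's structure theory, each $f_{i}$ is metrically differentiable at $\mathcal{L}^{n}$-a.e.\ point of $A_{i}$, the image of the set where the metric differential is degenerate is $\mathcal{H}^{n}$-null (metric area formula), and on the complement an exhaustion-plus-Egorov argument splits $A_{i}$ into countably many Borel pieces on each of which $f_{i}$ is a bi-Lipschitz homeomorphism onto its image with distortion against a fixed norm on $\mathbb{R}^{n}$ as close to $1$ as desired. Re-indexing, we reduce to $E=\bigsqcup_{m}E_{m}$ (disjoint, modulo an $\mathcal{H}^{n}$-null set), $E_{m}=g_{m}(C_{m})$ with $g_{m}\colon C_{m}\to E_{m}$ bi-Lipschitz and $C_{m}\subset\mathbb{R}^{n}$ bounded Borel; and, after removing an $\mathcal{L}^{n}$-null subset from each $C_{m}$, we may assume every $t\in C_{m}$ is simultaneously a Lebesgue density point of $C_{m}$, a Lebesgue point of the Jacobian $t\mapsto\mathbf{J}(\operatorname{MD}g_{m}(t))$, a point where $s_{t}:=\operatorname{MD}g_{m}(t)$ is a genuine norm, and a point near which $g_{m}$ is $(1+\varepsilon)$-bi-Lipschitz with respect to $s_{t}$ for every prescribed $\varepsilon>0$.

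Next I compute the density within one chart. Fix $m$, write $g=g_{m}$, $C=C_{m}$; fix $t_{0}\in C$ and set $p=g(t_{0})$, $s=s_{t_{0}}$, $K=\{v\in\mathbb{R}^{n}:s(v)<1\}$. Since $g^{-1}$ is, say, $L$-Lipschitz, $g^{-1}(B_{r}(p))$ lies in the Euclidean ball of radius $Lr$ about $t_{0}$, where the $(1+\varepsilon)$-bi-Lipschitz estimate gives, for small $r$, the inclusions $(t_{0}+(1-o(1))rK)\cap C\subseteq g^{-1}(B_{r}(p))\subseteq(t_{0}+(1+o(1))rK)\cap C$. As $t_{0}$ is a Lebesgue density point of $C$ and $K$ a bounded convex body, $\mathcal{L}^{n}\bigl(g^{-1}(B_{r}(p))\bigr)=(1+o(1))\,r^{n}\mathcal{L}^{n}(K)$. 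Applying the metric area formula to $g$ restricted to $g^{-1}(B_{r}(p))$ — using that $g$ is injective and $t_{0}$ a Lebesgue point of the Jacobian — we get
\begin{equation*}
\mathcal{H}^{n}(E_{m}\cap B_{r}(p))=\int_{g^{-1}(B_{r}(p))}\mathbf{J}(\operatorname{MD}g(t))\,d\mathcal{L}^{n}(t)=(1+o(1))\,\mathbf{J}(s)\,\mathcal{L}^{n}(K)\,r^{n}.
\end{equation*}
With Kirchheim's Jacobian normalized so that the area formula holds for the Hausdorff measure \eqref{eqq.1}, one has $\mathbf{J}(s)\,\mathcal{L}^{n}(K)=2^{n}$; this reduces to the isodiametric inequality $\mathcal{L}^{n}(U)\le2^{-n}\mathcal{L}^{n}(\{s\le1\})$ for $\operatorname{diam}_{s}U\le1$, which follows at once from Brunn--Minkowski applied to $U-U\subseteq\{s\le1\}$, and is trivial for $n=1$. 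Hence $\mathcal{H}^{n}(E_{m}\cap B_{r}(p))/(2r)^{n}\to1$ for every $t_{0}\in C$, i.e.\ for $\mathcal{H}^{n}$-a.e.\ $p\in E_{m}$.

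It remains to rule out interference from the other pieces. Fix $m$ and, at an $\mathcal{H}^{n}$-a.e.\ point $x\in E_{m}$, pick $\rho>0$ with $\mathcal{H}^{n}(E\cap B_{\rho}(x))<\infty$; then the restriction $\nu$ of $\mathcal{H}^{n}$ to $(E\setminus E_{m})\cap B_{\rho}(x)$ is a finite Borel measure with $\nu(E_{m})=0$, since the $E_{j}$ are pairwise disjoint. A standard $5r$-covering argument, valid in any metric space, shows $\mathcal{H}^{n}\bigl(\{y:\limsup_{r\to0}\nu(B_{r}(y))/(2r)^{n}>t\}\bigr)\le C(n)\,t^{-1}\nu(U)$ for every open $U$ containing that set; letting $U$ decrease to $E_{m}\cap B_{\rho}(x)$, outer regularity of $\nu$ together with $\nu(E_{m})=0$ forces $\limsup_{r\to0}\nu(B_{r}(y))/(2r)^{n}=0$, hence $\limsup_{r\to0}\mathcal{H}^{n}((E\setminus E_{m})\cap B_{r}(y))/(2r)^{n}=0$, for $\mathcal{H}^{n}$-a.e.\ $y$ near $x$. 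Combining with the previous paragraph, for $\mathcal{H}^{n}$-a.e.\ $x\in E_{m}$,
\begin{equation*}
1=\lim_{r\to0}\frac{\mathcal{H}^{n}(E_{m}\cap B_{r}(x))}{(2r)^{n}}\le\liminf_{r\to0}\frac{\mathcal{H}^{n}(E\cap B_{r}(x))}{(2r)^{n}}\le\limsup_{r\to0}\frac{\mathcal{H}^{n}(E\cap B_{r}(x))}{(2r)^{n}}\le1.
\end{equation*}
Since $m$ is arbitrary and $E=\bigsqcup_{m}E_{m}$ up to an $\mathcal{H}^{n}$-null set, the density equals $1$ at $\mathcal{H}^{n}$-a.e.\ point of $E$. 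The main obstacle is the bi-Lipschitz decomposition of the first paragraph — turning a.e.\ metric differentiability of a Lipschitz map into a genuine countable splitting into pieces that are, near each point, uniformly bi-Lipschitz to a normed $\mathbb{R}^{n}$, with every point a Lebesgue point of both the piece and the Jacobian; this is precisely the content of Kirchheim's theorem and is what makes the clean local computation above possible.
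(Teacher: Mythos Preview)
The paper does not prove Proposition~\ref{Prop_density}; it simply quotes it as a known result, referring to Theorem~9 in \cite{Kirk} and the remark following it. So there is no ``paper's own proof'' to compare against.

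Your sketch is precisely the argument behind Kirchheim's theorem: metric differentiability a.e.\ of Lipschitz maps $\mathbb{R}^{n}\to\operatorname{X}$, the resulting bi-Lipschitz decomposition into pieces modeled on normed copies of $\mathbb{R}^{n}$, the metric area formula, the normed isodiametric identity $\mathbf{J}(s)\,\mathcal{L}^{n}(K)=2^{n}$ (with the non-normalized Hausdorff measure \eqref{eqq.1}--\eqref{eqq.2} used here), and the $5r$-covering/differentiation argument to kill the contribution of the complementary pieces. The logic is sound and the identification of the main difficulty --- the bi-Lipschitz decomposition with uniform control near each point --- is accurate. Two small remarks: the images $f_{i}(A_{i})$ are in general only Suslin, not Borel, but this is harmless since they are $\mathcal{H}^{n}$-measurable; and the local-finiteness hypothesis you add is indeed part of Kirchheim's standing assumptions and is satisfied in every application made in the paper.
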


Now we prove a technical assertion concerning local Lipschitz constants, which will be quite useful in \S 3 below.
\begin{Lm}
\label{Lm.local_lip_const}
Let $\operatorname{X}:=(\operatorname{X},\operatorname{d})$ be a metric space and let
$\gamma \in C([a,b],\operatorname{X})$ be a simple rectifiable curve of positive length. Let $E \subset \Gamma$ be a closed set with $\mathcal{H}^{1}(E) > 0$.
Let $h \in \operatorname{LIP}(\Gamma)$ be such that, for $\mathcal{H}^{1}$-a.e. $x \in E$, there exists
$$
\lim\limits_{\substack{y \to x \\ y \in E}}\frac{|h(y)-h(x)|}{\operatorname{d}(x,y)} \in [0,+\infty].
$$
Then, for $\mathcal{H}^{1}$-a.e. $x \in E$, there exists
\begin{equation}
\label{eqq.local_lip_const}
\lim\limits_{\substack{y \to x \\ y \in \Gamma}}\frac{|h(y)-h(x)|}{\operatorname{d}(x,y)} = \lim\limits_{\substack{y \to x \\ y \in E}}\frac{|h(y)-h(x)|}{\operatorname{d}(x,y)}.
\end{equation}
\end{Lm}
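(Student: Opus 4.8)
The plan is to pull the statement back to the arc-length parametrization of $\gamma$ and to exploit, at $\mathcal{H}^{1}$-almost every point of $E$, two density facts simultaneously: the Lebesgue density of $E$ read in the parameter, and the infinitesimal-isometry property of the curve provided by Lemma~\ref{Lm.inner_and_outer_metric}. Since $\gamma$ is simple, rectifiable, and of positive length $L:=l(\gamma)>0$, its arc-length parametrization is a homeomorphism $\gamma_{s}:[0,L]\to\Gamma$ with $l(\gamma_{s}|_{[s',s'']})=|s'-s''|$ and $\gamma_{s}\in\operatorname{LIP}_{1}([0,L],\operatorname{X})$; in particular $h\circ\gamma_{s}\in\operatorname{LIP}_{L_{h}}([0,L])$, where $L_{h}$ denotes the Lipschitz constant of $h$. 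Put $E':=\gamma_{s}^{-1}(E)$; it is closed, hence $\mathcal{L}^{1}$-measurable, and $\mathcal{L}^{1}(E')>0$ by Remark~\ref{Rem.1}. Let $\underline{\Gamma}$ be the set from Lemma~\ref{Lm.inner_and_outer_metric}, let $Z\subset E$ be the $\mathcal{H}^{1}$-null set off which the limit along $E$ in the hypothesis exists, and let $N\subset E'$ be the set of points that are not Lebesgue density points of $E'$. Then
$$
G:=E'\setminus\bigl(\gamma_{s}^{-1}(Z)\cup\gamma_{s}^{-1}(\Gamma\setminus\underline{\Gamma})\cup N\cup\{0,L\}\bigr)
$$
satisfies $\mathcal{L}^{1}(E'\setminus G)=0$ by Remark~\ref{Rem.1} and the Lebesgue density theorem, hence $\mathcal{H}^{1}(E\setminus\gamma_{s}(G))=0$, and it suffices to verify \eqref{eqq.local_lip_const} at each $x_{0}=\gamma_{s}(s_{0})$ with $s_{0}\in G$.

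So I would fix such an $s_{0}$, set $x_{0}:=\gamma_{s}(s_{0})$, $\ell:=\lim_{y\to x_{0},\,y\in E}|h(y)-h(x_{0})|/\operatorname{d}(x_{0},y)\in[0,+\infty]$, and use two facts at $s_{0}$. First, since $x_{0}\in\underline{\Gamma}$, setting one of the two variables in \eqref{eqq.4.18} equal to $x_{0}$ gives $\operatorname{d}(\gamma_{s}(s),x_{0})=(1+o(1))\,|s-s_{0}|$ as $s\to s_{0}$, while $\operatorname{d}(\gamma_{s}(s'),\gamma_{s}(s''))\le|s'-s''|$ always. Second, since $s_{0}$ is a density point of $E'$, for each fixed $\eta\in(0,1)$ one has $\mathcal{L}^{1}\bigl(([0,L]\setminus E')\cap(s_{0}-(1+\eta)r,\,s_{0}+(1+\eta)r)\bigr)=o(r)$, which is $<2\eta r$ for all small $r>0$; consequently, every $s$ with $0<|s-s_{0}|=r$ small enough has a point $s'\in E'$ with $|s-s'|<\eta r$. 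Now I take $y=\gamma_{s}(s)\in\Gamma$ with $r:=|s-s_{0}|$ small and positive, pick $s'\in E'$ as above, and set $y':=\gamma_{s}(s')\in E$; then $|s'-s_{0}|\in((1-\eta)r,(1+\eta)r)$, so $y'\neq x_{0}$. Writing $A:=|h(y)-h(x_{0})|$, $A':=|h(y')-h(x_{0})|$, $D:=\operatorname{d}(x_{0},y)$, $D':=\operatorname{d}(x_{0},y')$, the two facts give $|A-A'|\le L_{h}|s-s'|<L_{h}\eta r$, $|D-D'|\le\operatorname{d}(y,y')\le|s-s'|<\eta r$, $A'\le L_{h}D'$, $D=(1+o(1))r$, and $D'=(1+o(1))|s'-s_{0}|>0$. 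Splitting $|A/D-A'/D'|\le|A-A'|/D+(A'/D')\,|D-D'|/D$, I obtain
$$
\Bigl|\frac{A}{D}-\frac{A'}{D'}\Bigr|\ \le\ 3L_{h}\eta\qquad\text{for all sufficiently small }r.
$$

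To finish: because $\gamma_{s}$ is a homeomorphism, $y\to x_{0}$ along $\Gamma$ is the same as $s\to s_{0}$, and then $D'=\operatorname{d}(x_{0},y')\to 0$ with $D'>0$. If $\ell<+\infty$, then given $\varepsilon>0$ I fix $\eta$ with $3L_{h}\eta<\varepsilon/2$; for all small $r$ the displayed bound holds and, since $y'\to x_{0}$ along $E$, also $|A'/D'-\ell|<\varepsilon/2$, whence $|A/D-\ell|<\varepsilon$, which is exactly \eqref{eqq.local_lip_const}. If $\ell=+\infty$, then given $M>0$ I fix $\eta$ small (depending on $M$ and $L_{h}$); for all small $r$ one has $A'/D'>2M$, hence $A\ge A'-L_{h}\eta r>2M(1-\eta)(1-o(1))r-L_{h}\eta r$, and dividing by $D=(1+o(1))r$ gives $A/D>M$ eventually, so the limit along $\Gamma$ equals $+\infty$ as well.

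The step I expect to be the main obstacle is the second fact above together with the bookkeeping in the displayed estimate: one must extract from the Lebesgue density of $E'$ at $s_{0}$ a nearby point $s'\in E'$ with $|s-s'|$ small relative to $|s-s_{0}|$, and then, carrying along the $o(1)$ coming from Lemma~\ref{Lm.inner_and_outer_metric}, show that replacing $y$ by $y'$ perturbs the difference quotient of $h$ at $x_{0}$ by only $O(\eta)$ uniformly in $y$. Conceptually this is the whole content of the lemma: at $\mathcal{H}^{1}$-a.e.\ point, $E$ is ``parametrically dense at first order'' inside $\Gamma$, so difference quotients of $h$ along $\Gamma$ are governed by those along $E$. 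The case $\ell=+\infty$ then needs the separate but entirely parallel argument indicated above.
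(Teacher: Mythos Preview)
Your proof is correct. The essential idea---that at almost every point of $E$ one can replace a nearby point $y\in\Gamma$ by a point $y'\in E$ whose distance to $y$ is $o(\operatorname{d}(x_0,y))$, and then use the Lipschitz bound on $h$---is exactly what the paper does, but you implement it differently. The paper works directly in the metric space: it invokes Kirchheim's density theorem (Proposition~\ref{Prop_density}) to show that a certain ``porosity'' quantity $\operatorname{Por}_E(x,r)$ is $o(r)$ at $\mathcal{H}^1$-a.e.\ $x\in E$, and then metrically projects $y\in\Gamma$ onto $E\cap\overline{B}_{\operatorname{d}(x,y)}(x)$. You instead pull everything back to $[0,L]$ via the arc-length parametrization, use the classical Lebesgue density theorem on $E'=\gamma_s^{-1}(E)$ to find $s'$, and rely on Lemma~\ref{Lm.inner_and_outer_metric} to translate between parameter distances and metric distances. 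Your route is more elementary in that it avoids Proposition~\ref{Prop_density} entirely; the price is the extra bookkeeping with the $o(1)$ factors coming from Lemma~\ref{Lm.inner_and_outer_metric}, which the paper's intrinsic argument does not need. One small remark: your separate treatment of the case $\ell=+\infty$ is unnecessary, since $h\in\operatorname{LIP}(\Gamma)$ forces $|h(y)-h(x_0)|/\operatorname{d}(x_0,y)\le L_h$ and hence $\ell\le L_h<+\infty$; the statement's ``$[0,+\infty]$'' is just overly generous.
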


\begin{proof}
We fix $h \in \operatorname{LIP}_{L}(\Gamma)$ for some $L > 0$. Since $E \subset \Gamma$, it is clear that
$$
\lim\limits_{\substack{y \to x \\ y \in E}}\frac{|h(x)-h(y)|}{\operatorname{d}(x,y)} = \varliminf\limits_{\substack{y \to x \\ y \in E}}\frac{|h(x)-h(y)|}{\operatorname{d}(x,y)} \le \varliminf\limits_{\substack{y \to x \\ y \in \Gamma}}\frac{|h(x)-h(y)|}{\operatorname{d}(x,y)} \quad \hbox{for each $x \in E$}.
$$
In order to establish \eqref{eqq.local_lip_const} it is sufficient to show that
\begin{equation}
\label{eqq.local_lip''}
\lim\limits_{\substack{y \to x \\ y \in E}}\frac{|h(y)-h(x)|}{\operatorname{d}(x,y)} \geq \varlimsup\limits_{\substack{y \to x \\ y \in \Gamma}}\frac{|h(y)-h(x)|}{\operatorname{d}(x,y)} \quad \hbox{for $\mathcal{H}^{1}$-a.e. $x \in E$}.
\end{equation}

For each $x \in E$, we put
$$
\operatorname{Por}_{E}(x,r):=\sup\{\sigma > 0: B_{\sigma}(y) \cap E = \emptyset \text{ and } B_{\sigma}(y) \subset B_{r}(x) \text{ for some } y \in \Gamma \setminus E\}.
$$
We claim that
\begin{equation}
\label{eqq.porosity}
\operatorname{Por}_{E}(x,r) = o(r), r \to 0 \qquad \hbox{for $\mathcal{H}^{1}$-a.e. $x \in E$}.
\end{equation}
Indeed, since $\Gamma$ is a path connected set, it is easy to see that $\mathcal{H}^{1}(B_{\sigma}(y) \cap \Gamma) \geq \sigma$ for each $y \in \Gamma$ and for every small
enough $\sigma > 0$. On the other hand, by Remark \ref{Rem.rectifiable_curve_is_a_rectifiable_set}
the set $\Gamma=\gamma([a,b])$ is $1$-rectifiable. Hence, so is
the set $E$. An application of Proposition \ref{Prop_density} gives
$$
\lim\limits_{r \to 0}\frac{\mathcal{H}^{1}(B_{r}(x) \cap E)}{2r} = 1 \qquad \hbox{for $\mathcal{H}^{1}$-a.e. point $x \in E$}.
$$
This immediately leads to \eqref{eqq.porosity}.

We fix $x \in E$ for which
$\operatorname{Por}_{E}(x,r) = o(r)$, $r \to 0$. Now, given $y \in \Gamma$, let $\underline{y} \in E$ be an arbitrary metric projection of $y$ to $E \cap \overline{B}_{\operatorname{d}(x,y)}(x)$.
Let $\{y_{n}\} \subset \Gamma$ be such that $\lim_{n \to \infty}y_{n} = x$, $y_{n} \neq x$ for all $n \in \mathbb{N}$, and
$$
\varlimsup\limits_{\substack{y \to x \\ y \in \Gamma}}\frac{|h(x)-h(y)|}{\operatorname{d}(x,y)}=\lim\limits_{n \to \infty}\frac{|h(x)-h(y_{n})|}{\operatorname{d}(x,y_{n})}.
$$
Hence,
\begin{equation}
\notag
\begin{split}
&|h(x)-h(y_{n})| \le |h(x)-h(\underline{y_{n}})| + |h(y_{n})-h(\underline{y_{n}})| \le |h(x)-h(\underline{y_{n}})|+L \operatorname{d}(y_{n},\underline{y_{n}})\\
&\le |h(x)-h(\underline{y_{n}})|+2L\operatorname{Por}_{E}(x,\operatorname{d}(x,y_{n})) = |h(x)-h(\underline{y_{n}})|+o(\operatorname{d}(x,y_{n})).
\end{split}
\end{equation}
Combining this inequality with the fact that $\operatorname{d}(x,\underline{y_{n}})=\operatorname{d}(x,y_{n})+o(\operatorname{d}(x,y_{n})), n \to \infty$, we get
\begin{equation}
\notag
\frac{|h(x)-h(y_{n})|}{\operatorname{d}(x,y_{n})} \le \frac{|h(x)-h(\underline{y_{n}})|}{\operatorname{d}(x,\underline{y_{n}})}\frac{\operatorname{d}(x,\underline{y_{n}})}{\operatorname{d}(x,y_{n})}+o(1) \le
\frac{|h(x)-h(\underline{y_{n}})|}{\operatorname{d}(x,\underline{y_{n}})}\Bigl(1+o(1)\Bigr)+o(1).
\end{equation}
As a result, we derive
\begin{equation}
\varlimsup\limits_{n \to \infty}\frac{|h(x)-h(y_{n})|}{\operatorname{d}(x,y_{n})} \le \varlimsup\limits_{n \to \infty}\frac{|h(x)-h(\underline{y_{n}})|}{\operatorname{d}(x,\underline{y_{n}})}
\le \lim\limits_{\substack{y \to x \\ y \in E}}\frac{|h(x)-h(y)|}{\operatorname{d}(x,y)}.
\end{equation}
This establishes the opposite inequality in \eqref{eqq.local_lip_const} and completes the proof.
\end{proof}

The crucial for the sequel fact about curves in metric spaces is given by the following classical result (see Theorem 4.4.5 in \cite{AmbTil}, for instance).

\begin{Prop}
\label{Prop.rectifiability}
Let $\gamma \in C([a,b],\operatorname{X})$ be such that $\mathcal{H}^{1}(\Gamma) < +\infty$. Then there exists an at most countable family $\{\gamma^{i}\}_{i \in I} \subset C([0,1],\operatorname{X})$ such that
(we identify $I$ with either $\{1,...,N\}$ for some $N \in \mathbb{N}$ or with $\mathbb{N}$):

(1) For each $i \in I$ the map $\gamma^{i}$ belongs to $\operatorname{LIP}([0,1],\operatorname{X})$;

(2) For each $i \in I$ the map $\gamma^{i}$ is injective;

(3) $\mathcal{H}^{1}(\Gamma \setminus \bigcup\limits_{i \in I}\Gamma^{i}) = 0$, where $\Gamma^{i}:=\gamma^{i}([0,1])$, $i \in I$;

(4) For each $i \in I$ with $i \geq 2$ the intersection $\Gamma^{i} \bigcap \bigcup_{j=1}^{i-1}\Gamma^{j}$ consists of a single point $\underline{x}^{i}$.

\end{Prop}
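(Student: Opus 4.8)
The plan is to realise $\Gamma$ as a countable ``tree of arcs'' assembled greedily, and then to control the leftover by a bookkeeping argument on $\mathcal{H}^{1}$. We may assume $\#\Gamma\ge 2$, the case $\#\Gamma=1$ being trivial with the empty family. The starting point is the classical fact that a continuum of finite $\mathcal{H}^{1}$-measure is arcwise connected; since $\Gamma=\gamma([a,b])$ is a continuum with $\mathcal{H}^{1}(\Gamma)<+\infty$, any two distinct points of $\Gamma$ are joined by an injective arc $\sigma:[0,1]\to\Gamma$, and any such arc has finite length, because $l(\sigma)=\mathcal{H}^{1}(\sigma([0,1]))\le\mathcal{H}^{1}(\Gamma)<+\infty$ by Remark~\ref{Rem.1}. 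After a constant-speed reparametrisation $\sigma$ becomes an injective $l(\sigma)$-Lipschitz map of $[0,1]$ into $\operatorname{X}$; this is how each $\gamma^{i}$ below will be produced.

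I would construct the family $\{\gamma^{i}\}$ by induction. Choose $\gamma^{1}$ to be any injective Lipschitz arc in $\Gamma$ and set $K_{1}:=\Gamma^{1}$. Assume $\gamma^{1},\dots,\gamma^{i-1}$ have been chosen so that $K_{i-1}:=\bigcup_{j=1}^{i-1}\Gamma^{j}$ is compact and connected, each $\gamma^{j}$ is injective and Lipschitz, and $\Gamma^{j}\cap K_{j-1}$ is a single point for every $j$ with $2\le j\le i-1$. If $\mathcal{H}^{1}(\Gamma\setminus K_{i-1})=0$, stop and put $I:=\{1,\dots,i-1\}$. Otherwise set
\[
s_{i}:=\sup\bigl\{\,l(\alpha):\ \alpha\subset\Gamma\ \text{is an injective arc meeting}\ K_{i-1}\ \text{exactly at one of its endpoints}\,\bigr\}.
\]
Joining a point of $\Gamma\setminus K_{i-1}$ to $K_{i-1}$ by an arc and truncating that arc at the first moment it meets the closed set $K_{i-1}$ produces an admissible $\alpha$, so $0<s_{i}\le\mathcal{H}^{1}(\Gamma)$. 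Let $\gamma^{i}$ be a constant-speed reparametrisation of an admissible arc with $l(\gamma^{i})\ge s_{i}/2$, and put $K_{i}:=K_{i-1}\cup\Gamma^{i}$; it is again compact and connected, and $\Gamma^{i}\cap K_{i-1}=\{\underline{x}^{i}\}$ by construction. If the process never terminates, take $I:=\mathbb{N}$. Properties (1), (2) and (4) then hold by construction.

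The accounting step is routine. For $i<j$ we have $\Gamma^{i}\cap\Gamma^{j}\subset K_{j-1}\cap\Gamma^{j}=\{\underline{x}^{j}\}$, so the sets $\Gamma^{i}$ are pairwise disjoint up to $\mathcal{H}^{1}$-null sets; since each $\gamma^{i}$ is injective, $\mathcal{H}^{1}(\Gamma^{i})=l(\gamma^{i})$ by Remark~\ref{Rem.1}, and therefore
\[
\sum_{i\in I}l(\gamma^{i})=\sum_{i\in I}\mathcal{H}^{1}(\Gamma^{i})=\mathcal{H}^{1}\Bigl(\bigcup_{i\in I}\Gamma^{i}\Bigr)\le\mathcal{H}^{1}(\Gamma)<+\infty .
\]
Hence $l(\gamma^{i})\to 0$ and so $s_{i}\to 0$; since every $x\in\Gamma$ satisfies $\operatorname{dist}(x,K_{i-1})\le s_{i}$ (truncate, as above, an arc from $x$ to $K_{i-1}$), the union $\bigcup_{i\in I}\Gamma^{i}$ is dense in $\Gamma$.

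It remains to prove (3), that $\mathcal{H}^{1}\bigl(\Gamma\setminus\bigcup_{i\in I}\Gamma^{i}\bigr)=0$. If the construction terminated, this is precisely the stopping condition; otherwise this is, I expect, the main obstacle, since density of $\bigcup_{i}\Gamma^{i}$ by itself is not enough. The argument must be carried out on each connected component $C$ of $\Gamma\setminus K_{i-1}$ separately: $\overline{C}$ is again an arcwise connected continuum of finite $\mathcal{H}^{1}$-measure with $\overline{C}\setminus C\subset K_{i-1}$, and combining this with $s_{i}\to 0$ one has to deduce $\mathcal{H}^{1}(\Gamma\setminus K_{i-1})\to 0$. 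This is exactly the content of the classical structure theorem for continua of finite length, \cite[Theorem~4.4.5]{AmbTil}; an equally classical alternative is to first produce a $1$-Lipschitz surjection of $[0,2\mathcal{H}^{1}(\Gamma)]$ onto $\Gamma$ covering $\mathcal{H}^{1}$-a.e.\ point at most twice and to extract the arc decomposition from its maximal injectivity intervals. Accordingly, in this last step I would invoke that theorem (as the authors do) or reproduce its proof.
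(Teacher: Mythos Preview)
The paper does not prove this proposition at all: it is stated as a known result with the pointer ``see Theorem~4.4.5 in \cite{AmbTil}'' and is used as a black box thereafter. Your write-up is therefore not to be compared with a proof in the paper but rather with the classical argument behind that reference, and indeed your greedy ``tree of arcs'' construction is precisely the standard strategy used to prove Theorem~4.4.5 of \cite{AmbTil}. The parts you do carry out---arcwise connectedness of a continuum with finite $\mathcal{H}^{1}$-measure, the inductive choice of near-maximal arcs meeting $K_{i-1}$ at a single endpoint, the bookkeeping $\sum_i l(\gamma^{i})\le\mathcal{H}^{1}(\Gamma)$, the conclusion $s_i\to 0$, and density of $\bigcup_i\Gamma^{i}$ via $\operatorname{dist}(x,K_{i-1})\le s_i$---are all correct.

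The one genuine gap is the one you yourself flag: establishing $\mathcal{H}^{1}\bigl(\Gamma\setminus\bigcup_i\Gamma^{i}\bigr)=0$ when the process does not terminate. At that point you propose to ``invoke that theorem (as the authors do)'', but Theorem~4.4.5 of \cite{AmbTil} \emph{is} the proposition you are proving, so this is circular rather than a reduction. Your alternative---first parametrising $\Gamma$ by a $1$-Lipschitz surjection of an interval that covers $\mathcal{H}^{1}$-a.e.\ point at most twice---does work, but that parametrisation theorem is itself a substantial classical result (of Wa\.zewski/Eilenberg--Harrold type) whose proof is comparable in difficulty to the step you are trying to avoid. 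In short, your outline is an accurate road map of the classical proof and matches the paper's decision to cite the result; it is not yet a self-contained proof because the crucial measure estimate in (3) remains outsourced.
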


\subsection{Jacobians}
We fix a  metric space $\operatorname{X}=(\operatorname{X},\operatorname{d})$ and a curve $\gamma \in C([a,b],\operatorname{X})$.
We put $\Gamma:=\gamma([a,b])$, as usual.
Now, following the approach introduced in \cite{Bate}, we define the Jacobain of a given Lipschitz function with respect to $\Gamma$.

\begin{Def}
\label{Def.Jacobian}
Let $x \in \Gamma$ and suppose that there are sets $E \subset \Gamma$, $S \subset \mathbb{R}$ and a biLipschitz map $\overline{\gamma}: S \to E$ such that $x$ is a density point of $E$.
Suppose that $|\dot{\overline{\gamma}}|(\overline{\gamma}^{-1}(x)) > 0$. Suppose $h \in \operatorname{LIP}(\operatorname{X})$ is such that $(h \circ \overline{\gamma})'(\overline{\gamma}^{-1}(x))$
exists. Then we define the Jacobian of $h$ at $x$ with respect to $\Gamma$ as
\begin{equation}
\mathcal{J}_{\Gamma}h(x):= \frac{|(h \circ \overline{\gamma})'(\overline{\gamma}^{-1}(x))|}{|\dot{\overline{\gamma}}(\overline{\gamma}^{-1}(x))|}.
\end{equation}
\end{Def}

\begin{Remark}
\label{Rem.correct_Jacobian}
It is not difficult to show that Definition \ref{Def.Jacobian} is well posed, i.e. it does not depend on the choice of $\overline{\gamma}$.
Note that the Jacobian makes sense $\mathcal{H}^{1}$-a.e. on $\Gamma$.
Furthermore, it is mentioned in \cite{Bate} that this definition is consistent with an alternative definition of the Jacobian given in \cite{AmbKirk}.
\end{Remark}

The following simple assertion will be useful in \S 5.

\begin{Lm}
\label{Lm.Jacobian_Lip_constant}
For each $h \in \operatorname{LIP}(\operatorname{X})$, for $\mathcal{H}^{1}$-a.e. $x \in \Gamma$ there exists the limit
$$
\lim_{y \to x}\frac{|h(y)-h(x)|}{\operatorname{d}(x,y)} = \operatorname{lip}h(x) = \mathcal{J}_{\Gamma}h(x).
$$
\end{Lm}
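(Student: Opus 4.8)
The plan is to localize the claim to one-dimensional biLipschitz pieces of $\Gamma$, where after an arc-length reparametrization $h$ becomes an ordinary real Lipschitz function to which the Lebesgue differentiation theorem applies, and then to transport the resulting pointwise identity from these pieces to all of $\Gamma$ by a density/porosity argument.

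First I would reduce to the pieces. Since $\mathcal J_\Gamma h$ is defined $\mathcal H^1$-a.e.\ on $\Gamma$ (Remark \ref{Rem.correct_Jacobian}), we may assume $\mathcal H^1(\Gamma)<+\infty$; then Proposition \ref{Prop.rectifiability} supplies an at most countable family of injective maps $\gamma^i\in\operatorname{LIP}([0,1],\operatorname X)$ with $\mathcal H^1\bigl(\Gamma\setminus\bigcup_i\Gamma^i\bigr)=0$, $\Gamma^i:=\gamma^i([0,1])$. Fix $i$ and let $\gamma^i_s\colon[0,\ell_i]\to\Gamma^i$ be the arc-length parametrization of the simple rectifiable curve $\gamma^i$, a $1$-Lipschitz bijection. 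It suffices to prove the asserted chain of equalities for $\mathcal H^1$-a.e.\ $x\in\Gamma^i$, for every $i$. By Lemma \ref{Lm.inner_and_outer_metric} (together with Remark \ref{Rem.1} and the continuity \eqref{eqq.4.20} of $\bigl(\gamma^i_s\bigr)^{-1}$, which turn the length quotient there into a distance quotient) there is a full-$\mathcal H^1$-measure set of $x\in\Gamma^i$ such that, writing $t_0:=\bigl(\gamma^i_s\bigr)^{-1}(x)$,
\[
\frac{\operatorname d\bigl(\gamma^i_s(\tau_1),\gamma^i_s(\tau_2)\bigr)}{|\tau_1-\tau_2|}\longrightarrow 1\qquad\text{as}\qquad\tau_1,\tau_2\to t_0 ;
\]
in particular $\gamma^i_s|_S$ is biLipschitz for $S:=[t_0-\delta,t_0+\delta]$ with $\delta>0$ small, it has metric speed $1$ at $t_0$, and $x$ is a density point of $E:=\gamma^i_s(S)$. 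Enlarging the exceptional set by the $\mathcal L^1$-null (hence, via Remark \ref{Rem.1}, $\mathcal H^1$-null) set where $h\circ\gamma^i_s$ fails to be differentiable, we may also assume $(h\circ\gamma^i_s)'(t_0)$ exists. For such $x$, Definition \ref{Def.Jacobian} applied with $\overline\gamma=\gamma^i_s|_S$ (legitimate by the above, and well posed by Remark \ref{Rem.correct_Jacobian}) gives $\mathcal J_\Gamma h(x)=\bigl|(h\circ\gamma^i_s)'(t_0)\bigr|$, while, since $\gamma^i_s$ is a homeomorphism onto $\Gamma^i$,
\[
\lim_{\substack{y\to x\\ y\in\Gamma^i}}\frac{|h(y)-h(x)|}{\operatorname d(x,y)}=\lim_{\tau\to t_0}\frac{\bigl|(h\circ\gamma^i_s)(\tau)-(h\circ\gamma^i_s)(t_0)\bigr|}{|\tau-t_0|}\cdot\frac{|\tau-t_0|}{\operatorname d\bigl(\gamma^i_s(\tau),\gamma^i_s(t_0)\bigr)}=\bigl|(h\circ\gamma^i_s)'(t_0)\bigr| .
\]
Hence the difference quotients of $h$ converge along $\Gamma^i$ to $\mathcal J_\Gamma h(x)$ for $\mathcal H^1$-a.e.\ $x\in\Gamma^i$.

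Finally I would upgrade convergence along $\Gamma^i$ to convergence along $\Gamma$. Both $\Gamma$ and each $\Gamma^i$ are $1$-rectifiable (Remark \ref{Rem.rectifiable_curve_is_a_rectifiable_set} and Proposition \ref{Prop.rectifiability}), and $\Gamma$, being a continuous image of $[a,b]$, is a compact — hence complete — path-connected metric space; so by Proposition \ref{Prop_density}, for $\mathcal H^1$-a.e.\ $x\in\Gamma^i$ one has $\mathcal H^1\bigl((\Gamma\setminus\Gamma^i)\cap B_r(x)\bigr)=o(r)$ as $r\to0$, while every sufficiently small ball centred on $\Gamma$ has $\mathcal H^1$-measure at least its radius. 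These are exactly the two inputs used in the proof of Lemma \ref{Lm.local_lip_const}, and that proof — now with $\Gamma^i$ in the role of $E$ and $\Gamma$ in the role of the curve's image, simplicity of the curve playing no part — then yields, for $\mathcal H^1$-a.e.\ $x\in\Gamma^i$,
\[
\lim_{\substack{y\to x\\ y\in\Gamma}}\frac{|h(y)-h(x)|}{\operatorname d(x,y)}=\lim_{\substack{y\to x\\ y\in\Gamma^i}}\frac{|h(y)-h(x)|}{\operatorname d(x,y)}=\mathcal J_\Gamma h(x) .
\]
Ranging over $i$, the limit along $\Gamma$ exists for $\mathcal H^1$-a.e.\ $x\in\Gamma$ and equals $\mathcal J_\Gamma h(x)$; since this limit exists it coincides with the corresponding upper limit $\operatorname{lip}(h|_\Gamma)(x)$, which is the meaning of $\operatorname{lip}h(x)$ in the statement. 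The step I expect to be the main obstacle is exactly this last transfer from the ``good'' biLipschitz arcs — where the matter is essentially one-variable calculus — to all of $\Gamma$: it is there that the density theory of rectifiable sets in metric spaces (Proposition \ref{Prop_density}, and hence completeness of $\Gamma$) together with the porosity mechanism of Lemma \ref{Lm.local_lip_const} cannot be dispensed with, whereas the decomposition of $\Gamma$ into simple rectifiable arcs and the ordinary Lebesgue/Rademacher differentiation theorem are routine.
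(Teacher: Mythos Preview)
Your proof is correct and follows essentially the same approach as the paper's: decompose $\Gamma$ into simple Lipschitz arcs via Proposition~\ref{Prop.rectifiability}, pass to the arc-length parametrization, apply Rademacher to $h\circ\gamma^i_s$, and combine with Lemma~\ref{Lm.inner_and_outer_metric} to identify the limit with the Jacobian. The one place you are more careful than the paper is the final transfer from limits along $\Gamma^i$ to limits along $\Gamma$, which the paper dispatches with the sentence ``it is sufficient to prove the lemma in the particular case when $\gamma$ is simple'' after observing $\mathcal H^1(\{\underline{x}^i\})=0$, whereas you explicitly invoke the density/porosity mechanism of Lemma~\ref{Lm.local_lip_const}; this is an elaboration of the same idea rather than a different route.
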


\begin{proof} We recall Proposition \ref{Prop.rectifiability}. Since $I$ is an at most countable index set, we have
$$
\mathcal{H}^{1}(\{\underline{x}^{i}:i \in I\})=0.
$$
Hence, it is sufficient to prove the lemma in the particular case when $\gamma$ is simple.

Taking into account Lemma \ref{Lm.inner_and_outer_metric}, we conclude that if $\gamma_{s}$ is the arc length
parametrization of the curve $\gamma$, then for $\mathcal{H}^{1}$-a.e. point $x \in \Gamma$ there is a small enough $\delta(x) > 0$ such that $\gamma_{s}|_{[\gamma_{s}^{-1}(x)-\delta(x),\gamma_{s}^{-1}(x)+\delta(x)]}$ is
a biLipschitz map. Since the composition $h \circ \gamma_{s}$ is Lipschitz, it is differentiable $\mathcal{L}^{1}$-a.e. on $[0,l(\gamma)]$ by the Rademacher theorem. Hence,
by Remark \ref{Rem.1} for $\mathcal{H}^{1}$-a.e. $x \in \Gamma$, there exists
$$
\lim\limits_{y \to x}\frac{|h(y)-h(x)|}{|\gamma_{s}^{-1}(x)-\gamma_{s}^{1}(y)|} = |(h\circ\gamma_{s})'(\gamma_{s}^{-1}(x))|.
$$
It remains to combine this observation with Definition \ref{Def.Jacobian}, Lemma \ref{Lm.inner_and_outer_metric} and Remark \ref{Rem.correct_Jacobian}.
\end{proof}

\subsection{Area-type formulas}
We recall the general area formula (see Theorem 2.8 in \cite{Bate}).
\begin{Prop}
\label{Prop.area_formula}
Let $\operatorname{Y}=(\operatorname{Y},\rho)$ be  a nonempty metric space and let $\gamma \in C([a,b],\operatorname{Y})$. For every function
$h \in \operatorname{LIP}(\operatorname{Y})$, and for every Borel function $\theta:\Gamma \to [0,+\infty]$,
\begin{equation}
\int\limits_{\Gamma}\theta(y)\mathcal{J}_{\Gamma}h(y)\,d\mathcal{H}^{1}(y) = \int\limits_{h(\Gamma)}\sum\limits_{y \in h^{-1}(t)}\theta(y)\,dt.
\end{equation}
\end{Prop}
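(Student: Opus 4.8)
The plan is to reduce the identity, by passing to an arc-length parametrization, to the classical one-dimensional area formula for Lipschitz functions on an interval. The first observation I would record is that both sides are countably additive under Borel partitions of $\Gamma$ and monotone in $\theta$ — the left side by additivity of the integral, the right side because $\sum_{y\in h^{-1}(t)\cap\Gamma}\theta(y)$ splits along a partition of $\Gamma$ and by monotone convergence — and that the Jacobian is \emph{local}: by Definition \ref{Def.Jacobian}, $\mathcal{J}_{\Gamma}h(x)$ depends only on the biLipschitz subsets of $\Gamma$ that accumulate at $x$ with positive metric speed, so $\mathcal{J}_{\Gamma}h=\mathcal{J}_{\Gamma'}h$ $\mathcal{H}^{1}$-a.e.\ on $\Gamma'$ whenever $\Gamma'\subseteq\Gamma$. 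Together these reduce the formula to each piece of a convenient decomposition of $\Gamma$. I would then assume $\mathcal{H}^{1}(\Gamma)<+\infty$ — the case $\mathcal{H}^{1}(\Gamma)=+\infty$ I would leave to the general measure-theoretic reduction of \cite{Bate} — and apply Proposition \ref{Prop.rectifiability} to obtain injective curves $\gamma^{i}\in\operatorname{LIP}([0,1],\operatorname{Y})$, $i\in I$, whose images $\Gamma^{i}$ cover $\Gamma$ up to an $\mathcal{H}^{1}$-null set and meet pairwise in at most a single point. The sets $A_{i}:=\Gamma^{i}\setminus\bigcup_{j<i}\Gamma^{j}$ are then Borel and partition $\Gamma$ up to the union of a countable set and an $\mathcal{H}^{1}$-null set $N$; I would check that $N$ contributes nothing to the right-hand side — for the $\mathcal{H}^{1}$-null part because $h$ Lipschitz forces $\mathcal{L}^{1}(h(N))=0$, for the countable part because each level set of $h$ there reduces to a single point. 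This leaves the case of a \emph{simple rectifiable} curve $\gamma$ of positive length (with $\theta$ restricted to the relevant piece and $h^{-1}(t)$ read as $h^{-1}(t)\cap\Gamma$).

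For such $\gamma$ I would pass to the arc-length parametrization $\gamma_{s}\colon[0,l]\to\Gamma$, $l=l(\gamma)$, a $1$-Lipschitz bijection whose inverse is continuous by compactness. Being $1$-Lipschitz, $\gamma_{s}$ satisfies $\mathcal{H}^{1}(B)\le\mathcal{L}^{1}(\gamma_{s}^{-1}(B))$ for every Borel $B\subseteq\Gamma$, so the measure $\mathcal{H}^{1}$ on $\Gamma$ is dominated by the push-forward $(\gamma_{s})_{\#}\mathcal{L}^{1}$; since both have total mass $l(\gamma)=\mathcal{H}^{1}(\Gamma)$ by Remark \ref{Rem.1}, they coincide. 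Next I would show that for $\mathcal{L}^{1}$-a.e.\ $s$ the derivative $(h\circ\gamma_{s})'(s)$ exists (Rademacher, since $h\circ\gamma_{s}$ is Lipschitz) and
\begin{align*}
\bigl|(h\circ\gamma_{s})'(s)\bigr|
&=\lim_{s'\to s}\frac{|h(\gamma_{s}(s'))-h(\gamma_{s}(s))|}{\rho(\gamma_{s}(s'),\gamma_{s}(s))}\cdot\frac{\rho(\gamma_{s}(s'),\gamma_{s}(s))}{|s'-s|}\\
&=\mathcal{J}_{\Gamma}h(\gamma_{s}(s)),
\end{align*}
where the first factor converges to $\mathcal{J}_{\Gamma}h(\gamma_{s}(s))$ by Lemma \ref{Lm.Jacobian_Lip_constant} — the unrestricted limit over $y\to x$ exists there, and I restrict it along $y=\gamma_{s}(s')$, which is legitimate since $\gamma_{s}$ is continuous and injective — and the second factor converges to $1$ by Lemma \ref{Lm.inner_and_outer_metric}, because $l(\gamma_{\gamma_{s}(s),\gamma_{s}(s')})=|s'-s|$. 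Finally, bijectivity of $\gamma_{s}$ gives $h(\Gamma)=(h\circ\gamma_{s})([0,l])$ and $h^{-1}(t)\cap\Gamma=\gamma_{s}\bigl((h\circ\gamma_{s})^{-1}(t)\bigr)$, hence $\sum_{y\in h^{-1}(t)\cap\Gamma}\theta(y)=\sum_{s'\in(h\circ\gamma_{s})^{-1}(t)}\theta(\gamma_{s}(s'))$.

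With $u:=h\circ\gamma_{s}\in\operatorname{LIP}([0,l])$ and $w:=\theta\circ\gamma_{s}\colon[0,l]\to[0,+\infty]$ Borel, the change of variables $\int_{\Gamma}f\,d\mathcal{H}^{1}=\int_{0}^{l}(f\circ\gamma_{s})\,ds$ together with the identities above turn the left-hand side of the asserted formula into $\int_{0}^{l}w(s)\,|u'(s)|\,ds$ and the right-hand side into $\int_{\mathbb{R}}\sum_{s\in u^{-1}(t)}w(s)\,dt$; their equality is precisely the classical one-dimensional area formula for the Lipschitz function $u$ with nonnegative Borel weight $w$ (the case $w\equiv 1$ being the Banach indicatrix theorem), and summing over $i\in I$ recovers the general statement. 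I expect the main obstacle to be the identity $|(h\circ\gamma_{s})'|=\mathcal{J}_{\Gamma}h\circ\gamma_{s}$ almost everywhere: this is the only genuinely analytic input, and it rests on the infinitesimal coincidence of the intrinsic and ambient metrics along a rectifiable curve (Lemma \ref{Lm.inner_and_outer_metric}). After that, the remaining difficulties are organizational — the reduction to simple curves and, above all, the case $\mathcal{H}^{1}(\Gamma)=+\infty$, which is where one has to be most careful.
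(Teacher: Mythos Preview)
The paper does not prove this proposition: it is simply quoted as Theorem 2.8 in \cite{Bate}, with no argument given. So there is no ``paper's own proof'' to compare against.

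That said, your sketch is a sound way to obtain the result in the case $\mathcal{H}^{1}(\Gamma)<+\infty$, which is in fact the only regime in which the paper ever invokes the formula. The reduction via Proposition~\ref{Prop.rectifiability} to simple Lipschitz arcs, the identification $(\gamma_{s})_{\#}\mathcal{L}^{1}=\mathcal{H}^{1}\lfloor\Gamma$, and the computation $|(h\circ\gamma_{s})'|=\mathcal{J}_{\Gamma}h\circ\gamma_{s}$ a.e.\ via Lemmas~\ref{Lm.inner_and_outer_metric} and~\ref{Lm.Jacobian_Lip_constant} are all legitimate and non-circular (both lemmas precede the proposition and do not use it). The final appeal to the one-dimensional area/Banach indicatrix formula for $u=h\circ\gamma_{s}$ is exactly the right endpoint. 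One small comment: when you split $\Gamma$ into the pieces $A_{i}$, you should also note that on the right-hand side the sum $\sum_{y\in h^{-1}(t)\cap\Gamma}\theta(y)$ need not be finite, so the countable additivity you invoke is really just rearrangement of a sum of nonnegative terms --- harmless, but worth saying. The case $\mathcal{H}^{1}(\Gamma)=+\infty$ you explicitly defer; since the paper never needs it, this is not a defect for the present purposes.
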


We also need an integral representation for the variation (see Theorem 2.10.13 in \cite{Federer}).
\begin{Prop}
\label{Prop.integral_repres_for_variation}
Given a metric space $\operatorname{Y}=(\operatorname{Y},\rho)$, for each map $g \in C([a,b],\operatorname{Y})$,
\begin{equation}
V_{g}([a,b])=\int\limits_{g([a,b])}\#g^{-1}(y)\,d\mathcal{H}^{1}(y).
\end{equation}
\end{Prop}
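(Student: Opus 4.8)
The plan is to establish this metric-valued Banach indicatrix formula by a dyadic exhaustion combined with the monotone convergence theorem. For each $n \in \mathbb{N}$ let $\operatorname{T}^{n}=\{t^{n}_{k}\}_{k=0}^{2^{n}}$ be the uniform dyadic partition of $[a,b]$, write $I^{n}_{k}:=[t^{n}_{k-1},t^{n}_{k}]$, and set $u_{n}(y):=\sum_{k=1}^{2^{n}}\mathbbm{1}_{g(I^{n}_{k})}(y)$ for $y\in g([a,b])$. Since each $g(I^{n}_{k})$ is compact, hence Borel and $\mathcal{H}^{1}$-measurable, one has $\int_{g([a,b])}u_{n}\,d\mathcal{H}^{1}=\sum_{k=1}^{2^{n}}\mathcal{H}^{1}(g(I^{n}_{k}))$. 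Passing from level $n$ to $n+1$ splits each $I^{n}_{k}$ into two subintervals whose images cover $g(I^{n}_{k})$, so $\mathbbm{1}_{g(I^{n}_{k})}\le\mathbbm{1}_{g(I^{n+1}_{2k-1})}+\mathbbm{1}_{g(I^{n+1}_{2k})}$, and therefore $u_{n}\le u_{n+1}$ pointwise.

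Next I would identify the pointwise limit of $u_{n}$. Let $D\subset[a,b]$ be the countable set of dyadic rationals; then $g(D)$ is countable, hence $\mathcal{H}^{1}(g(D))=0$. For $y\notin g(D)$ no preimage of $y$ is an endpoint of any $I^{n}_{k}$, so $u_{n}(y)$ counts exactly the number of distinct dyadic intervals whose interior meets $g^{-1}(y)$; as $n\to\infty$ these intervals eventually separate the preimages, giving $u_{n}(y)\uparrow\#g^{-1}(y)$ in $[0,+\infty]$. By the monotone convergence theorem, $\int_{g([a,b])}\#g^{-1}(y)\,d\mathcal{H}^{1}(y)=\lim_{n\to\infty}\sum_{k=1}^{2^{n}}\mathcal{H}^{1}(g(I^{n}_{k}))$, so it remains only to show that the right-hand side equals $V_{g}([a,b])$.

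I would obtain this last equality by squeezing each $\mathcal{H}^{1}(g(I^{n}_{k}))$ between a chord and a length. For the lower bound, $g(I^{n}_{k})$ is connected and contains $g(t^{n}_{k-1})$ and $g(t^{n}_{k})$; applying the $1$-Lipschitz function $\rho(g(t^{n}_{k-1}),\cdot)$ and using the intermediate value property shows $\mathcal{H}^{1}(g(I^{n}_{k}))\ge\rho(g(t^{n}_{k-1}),g(t^{n}_{k}))$, whence $\sum_{k}\mathcal{H}^{1}(g(I^{n}_{k}))\ge V_{g}(\operatorname{T}^{n})$. Since $\operatorname{T}^{n}\preceq\operatorname{T}^{n+1}$, Remark \ref{Rem.monotonicity_variation} makes $V_{g}(\operatorname{T}^{n})$ nondecreasing, and by continuity of $g$ (a standard mesh-refinement argument) $V_{g}(\operatorname{T}^{n})\uparrow V_{g}([a,b])$; this already forces the limit to be $+\infty$ when $g$ has infinite variation. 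For the upper bound, assuming $V_{g}([a,b])<+\infty$, the arc-length parametrization of $g|_{I^{n}_{k}}$ is $1$-Lipschitz and hence does not increase $\mathcal{H}^{1}$, so $\mathcal{H}^{1}(g(I^{n}_{k}))\le l(g|_{I^{n}_{k}})$; additivity of length then gives $\sum_{k}\mathcal{H}^{1}(g(I^{n}_{k}))\le l(g)=V_{g}([a,b])$. Combining the two bounds yields $\lim_{n}\sum_{k}\mathcal{H}^{1}(g(I^{n}_{k}))=V_{g}([a,b])$, completing the proof.

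I expect the main obstacle to be this squeeze step: at a fixed level $n$ neither the chord sum nor the length sum equals $\sum_{k}\mathcal{H}^{1}(g(I^{n}_{k}))$, and the argument hinges on the fact that, as the mesh shrinks, the lower (chord) bound converges \emph{up} to $V_{g}([a,b])$ while the upper (length) bound stays dominated by it. The two classical facts underpinning the pincer, namely that $V_{g}(\operatorname{T}^{n})\to V_{g}([a,b])$ for continuous $g$ and that a $1$-Lipschitz map does not increase $\mathcal{H}^{1}$, together with the careful treatment of the exceptional null set $g(D)$ in the monotone convergence step, are the delicate points on which the whole argument rests.
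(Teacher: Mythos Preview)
Your argument is correct. The paper does not supply its own proof of this proposition; it simply records the result as Theorem 2.10.13 in Federer's \emph{Geometric Measure Theory} and uses it as a black box. What you have written is essentially a clean, self-contained reconstruction of the classical Banach indicatrix argument: approximate the multiplicity function $\#g^{-1}(\cdot)$ from below by the step functions $u_{n}=\sum_{k}\mathbbm{1}_{g(I^{n}_{k})}$, identify the monotone limit off the $\mathcal{H}^{1}$-null set $g(D)$, and squeeze $\sum_{k}\mathcal{H}^{1}(g(I^{n}_{k}))$ between the chord sum $V_{g}(\operatorname{T}^{n})$ and the length $V_{g}([a,b])$. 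Each of the ingredients you flag as delicate (convergence $V_{g}(\operatorname{T}^{n})\to V_{g}([a,b])$ via uniform continuity, the $1$-Lipschitz estimate $\mathcal{H}^{1}(g(I^{n}_{k}))\le l(g|_{I^{n}_{k}})$, and the treatment of the exceptional set $g(D)$) is handled correctly, and the infinite-variation case is covered by your lower bound alone. This is in the same spirit as Federer's proof, so there is no substantive divergence from the literature the paper relies on.
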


\subsection{Absolutely continuous and Sobolev curves}
We fix a  metric space $\operatorname{X}=(\operatorname{X},\operatorname{d})$.
We start with a characterization of continuity via Lipschitz post-compositions.
\begin{Lm}
\label{Lm.continuity}
Let $\gamma: [a,b] \to \operatorname{X}$ be such that $h \circ \gamma \in C([a,b])$ for every $h \in \operatorname{LIP}_{1}(\operatorname{X})$.
Then the map $\gamma$ is continuous.
\end{Lm}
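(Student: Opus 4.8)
The plan is to exploit the single most basic family of $1$-Lipschitz functions available on any metric space: the distance functions to fixed points. These are admissible test functions in the hypothesis, and they are precisely what is needed to detect convergence.

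\textbf{Step 1 (choice of test function).} Fix an arbitrary $t_{0} \in [a,b]$ and put $x_{0}:=\gamma(t_{0})$. Define $h_{0}:\operatorname{X} \to \mathbb{R}$ by $h_{0}(y):=\operatorname{d}(y,x_{0})$. By the triangle inequality, $h_{0} \in \operatorname{LIP}_{1}(\operatorname{X})$, so the hypothesis applies to $h_{0}$ and yields $h_{0}\circ\gamma \in C([a,b])$; that is, the real-valued function
$$
t \longmapsto \operatorname{d}(\gamma(t),\gamma(t_{0}))
$$
is continuous on $[a,b]$.

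\textbf{Step 2 (conclusion).} Since $(h_{0}\circ\gamma)(t_{0})=\operatorname{d}(\gamma(t_{0}),\gamma(t_{0}))=0$, continuity of $h_{0}\circ\gamma$ at $t_{0}$ gives $\lim_{t \to t_{0}}\operatorname{d}(\gamma(t),\gamma(t_{0}))=0$, which is exactly the assertion that $\gamma(t) \to \gamma(t_{0})$ in $\operatorname{X}$ as $t \to t_{0}$. As $t_{0} \in [a,b]$ was arbitrary, $\gamma$ is continuous on $[a,b]$.

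There is essentially no obstacle here: the whole argument rests on the elementary facts that $\operatorname{d}(\cdot,x_{0})$ is $1$-Lipschitz and that convergence in a metric space is equivalent to convergence of the distance to a fixed point when that point is the prospective limit. In fact the hypothesis is used only for the countably-many-irrelevant, pointwise-chosen functions $h_{0}=\operatorname{d}(\cdot,\gamma(t_{0}))$, so the statement holds under a considerably weaker assumption than the one stated.
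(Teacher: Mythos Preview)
Your proof is correct and follows essentially the same approach as the paper: both fix $t_{0}\in[a,b]$, use the distance function $h_{x_{0}}(\cdot)=\operatorname{d}(\cdot,\gamma(t_{0}))\in\operatorname{LIP}_{1}(\operatorname{X})$, and observe that continuity of $h_{x_{0}}\circ\gamma$ at $t_{0}$ is equivalent to continuity of $\gamma$ at $t_{0}$.
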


\begin{proof}
Given $x \in \operatorname{X}$, we set $h_{x}(y):=\operatorname{d}(x,y)$, $y \in \operatorname{X}$.
Note that, given a point $t_{0} \in [a,b]$, the map $\gamma$ is continuous at $t_{0}$ if and only if $h_{x_{0}} \circ \gamma$ is continuous at $t_{0}$, where $x_{0}:=\gamma(t_{0})$.
Since $h_{x} \in \operatorname{LIP}_{1}(\operatorname{X})$ for every $x \in \operatorname{X}$, the lemma follows.
\end{proof}

Now we establish a similar property in the context of the Borel regularity.
\begin{Lm}
\label{Lm.Borel_regularity}
Assume that $\operatorname{X}$ is separable. Let $\gamma: [a,b] \to \operatorname{X}$ be such that $h \circ \gamma \in \mathfrak{B}([a,b])$ for every $h \in \operatorname{LIP}(\operatorname{X})$.
Then the map $\gamma$ is Borel.
\end{Lm}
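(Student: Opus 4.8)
The idea is that the only Lipschitz functions we actually need are the distance functions based at points of a countable dense set, and that the balls centered at such points with rational radii form a countable base for the topology of $\operatorname{X}$. Since $\operatorname{X}$ is separable, fix a countable dense subset $\{x_{n}\}_{n \in \mathbb{N}} \subset \operatorname{X}$, and for each $n$ put $h_{n}(y):=\operatorname{d}(x_{n},y)$, so that $h_{n} \in \operatorname{LIP}_{1}(\operatorname{X})$. By the hypothesis, $h_{n} \circ \gamma \in \mathfrak{B}([a,b])$ for every $n \in \mathbb{N}$.

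First I would observe that, for every $n \in \mathbb{N}$ and every rational $q > 0$,
$$
\gamma^{-1}(B_{q}(x_{n})) = \{t \in [a,b]: \operatorname{d}(x_{n},\gamma(t)) < q\} = (h_{n} \circ \gamma)^{-1}\bigl([0,q)\bigr),
$$
which is a Borel subset of $[a,b]$ since $h_{n} \circ \gamma$ is Borel. Next, because $\operatorname{X}$ is separable, the countable family $\mathcal{U}:=\{B_{q}(x_{n}): n \in \mathbb{N},\ q \in \mathbb{Q} \cap (0,\infty)\}$ is a base for the topology of $\operatorname{X}$: indeed, any nonempty open set $U \subset \operatorname{X}$ and any point $x \in U$ admit $r > 0$ with $B_{r}(x) \subset U$, and then choosing $n$ with $\operatorname{d}(x_{n},x) < r/3$ and a rational $q$ with $\operatorname{d}(x_{n},x) < q < r/2$ yields $x \in B_{q}(x_{n}) \subset U$. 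Consequently every open set $U \subset \operatorname{X}$ is a countable union of members of $\mathcal{U}$, and therefore
$$
\gamma^{-1}(U) = \bigcup_{\substack{B_{q}(x_{n}) \in \mathcal{U} \\ B_{q}(x_{n}) \subset U}} \gamma^{-1}(B_{q}(x_{n}))
$$
is a countable union of Borel subsets of $[a,b]$, hence Borel.

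Finally, since the preimage of every open subset of $\operatorname{X}$ under $\gamma$ is Borel, the map $\gamma$ is Borel measurable, which is the assertion. I do not expect any genuine obstacle here; the only point that must be handled with a little care is the verification that $\mathcal{U}$ generates the Borel $\sigma$-algebra of $\operatorname{X}$, and this is precisely where separability is used (the statement can fail for non-separable $\operatorname{X}$, where the distance functions to a dense set need not detect all open sets).
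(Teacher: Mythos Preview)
Your proof is correct and is genuinely different from the paper's argument. The paper proceeds via functional analysis: it uses the Fr\'echet isometric embedding of the separable space $\operatorname{X}$ into $l_{\infty}$, observes that every bounded linear functional on $l_{\infty}$ restricts to a Lipschitz function on $\operatorname{X}$ (so the hypothesis forces $\gamma$ to be \emph{weakly} Borel), and then invokes the Pettis measurability theorem to upgrade weak Borel measurability to strong Borel measurability, using separability. Your approach stays entirely within metric-space territory: you only use the distance functions $h_{n}=\operatorname{d}(x_{n},\cdot)$ to a countable dense set, and you exploit separability through the fact that the rational-radius balls about the $x_{n}$ form a countable base for the topology, so that Borel measurability of $\gamma$ reduces to checking it on this base. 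Your argument is shorter, self-contained, and avoids both the embedding theorem and the Pettis theorem; the paper's route, on the other hand, highlights the link with the classical Banach-space measurability framework and would generalize immediately to any situation covered by Pettis-type results. Either way, separability is the crux: in your proof it produces a countable base, in the paper's proof it is the hypothesis of the Pettis theorem.
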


\begin{proof}
Using the Fr\'echet imbedding theorem (see \S 4.1 in \cite{HKST}) we may assume that $\operatorname{X}$ is a subset of $l_{\infty}$.
Clearly, the restriction of every functional $h \in l_{\infty}^{\ast}$ to $\operatorname{X}$ is a Lipschitz function. Hence, $\gamma$ is weakly Borel.
Since $\operatorname{X}$ is separable, by the Pettis measurability theorem (see Corollary 3.1.2 in \cite{HKST}) the map $\gamma$ is Borel.
\end{proof}

Following \cite{AGS} we introduce the concept of $p$-absolutely continuous curves.
\begin{Def}
\label{Def.4}
Given a metric space $\operatorname{Y}=(\operatorname{Y},\rho)$, we say that a map $\gamma:[a,b] \to \operatorname{Y}$ is \textit{absolutely continuous} if there is a nonnegative $g \in \widetilde{L}_{1}([a,b])$ such that
\begin{equation}
\label{eqq.absolute_continuity}
\operatorname{d}(\gamma(t_{1}),\gamma(t_{2})) \le \int\limits_{t_{1}}^{t_{2}}g(t)\,dt \quad \hbox{for all} \quad a \le t_{1} \le t_{2} \le b.
\end{equation}
For $p \in [1,\infty]$, by $AC_{p}([a,b],\operatorname{Y})$ we denote the space of all $p$-absolutely continuous curves, i.e.
those absolutely continuous curves for which we
can find $g$ as above in the space $\widetilde{L}_{p}([a,b])$.
\end{Def}

In the case $\operatorname{Y}=(\mathbb{R},|\cdot|)$, $AC_{p}([a,b])$ denotes the corresponding space of all $p$-absolutely continuous real-valued functions.

\begin{Remark}
\label{Rem.Lipschitz_case}
Note that  the class $AC_{\infty}([a,b],\operatorname{Y})$ can be identified with the class $\operatorname{LIP}([a,b],\operatorname{Y})$.
\end{Remark}

We present the following elementary characterization of $AC_{p}$-curves.
The sufficiency part of the proof is essentially based on the ideas from the proof of  Theorem 1.1.2 in \cite{AGS}.
\begin{Lm}
\label{Lm.elementary_characterization}
Given $p \in [1,\infty]$, a map $\gamma: [a,b] \to \operatorname{X}$ belongs to the space  $AC_{p}([a,b],\operatorname{X})$ if and only if there exists a nonnegative Borel function $g \in \widetilde{L}_{p}([a,b])$ such that for every $h \in \operatorname{LIP}(\operatorname{X})$ the postcomposition $h \circ \gamma$
belongs to $AC_{p}([a,b])$ and
\begin{equation}
\label{eqq.AC_ineq'}
\Bigl|\frac{d(h \circ \gamma)}{dt}(t)\Bigr| \le \operatorname{lip}h(\gamma(t)) g(t) \quad \hbox{for $\mathcal{L}^{1}$-a.e. $t \in [a,b]$}.
\end{equation}

Furthermore, if $\gamma \in AC_{p}([a,b],\operatorname{X})$, then for $\mathcal{L}^{1}$-a.e.\ $t \in [a,b]$ there
exists the limit
\begin{equation}
\label{eqq.metric_speed}
\lim\limits_{h \to 0}\frac{\operatorname{d}(\gamma(t),\gamma(t+h))}{|h|}=|\dot{\gamma}(t)|=\operatorname{lip}\gamma(t),
\end{equation}
it defines a nonnegative function  $|\dot{\gamma}| \in \widetilde{L}_{p}([a,b])$ called the metric speed of $\gamma$, which is the least - in the $\mathcal{L}^{1}$-a.e.\ sense - function $g$ for which \eqref{eqq.absolute_continuity}
holds. Moreover, there exists a countable family $\{h_{n}\} \subset \operatorname{LIP}_{1}(\operatorname{X})$ such that
\begin{equation}
\label{eqq.metric_speed_representation}
|\dot{\gamma}(t)|=\sup\limits_{n \in \mathbb{N}}|(h_{n} \circ \gamma)'(t)| \quad \hbox{for $\mathcal{L}^{1}$-a.e. $t \in [a,b]$}.
\end{equation}
\end{Lm}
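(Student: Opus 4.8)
The plan is to run the classical scheme of \cite{AGS} (Theorem 1.1.2), with the mild extra care needed because $\operatorname{X}$ is not assumed separable and because $\operatorname{lip}h$ is defined to vanish at isolated points. \emph{Setup and the ``if'' direction.} Suppose a nonnegative Borel $g\in\widetilde{L}_p([a,b])$ as in the statement exists. Since $\operatorname{LIP}_1(\operatorname{X})\subset\operatorname{LIP}(\operatorname{X})$ and $AC_p([a,b])\subset C([a,b])$, the hypothesis gives $h\circ\gamma\in C([a,b])$ for every $h\in\operatorname{LIP}_1(\operatorname{X})$, hence $\gamma\in C([a,b],\operatorname{X})$ by Lemma \ref{Lm.continuity}; in particular $\Gamma=\gamma([a,b])$ is compact, so separable. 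I would fix a countable dense set $\{x_n\}_{n\in\mathbb{N}}\subset\Gamma$ and put $h_n(x):=\operatorname{d}(x,x_n)$, so $h_n\in\operatorname{LIP}_1(\operatorname{X})$ and $\operatorname{lip}h_n\le 1$ everywhere. Then $\varphi_n:=h_n\circ\gamma\in AC_p([a,b])$ and $|\varphi_n'|\le g$ a.e.\ by \eqref{eqq.AC_ineq'}. For $a\le s\le t\le b$ the triangle inequality gives $|\varphi_n(s)-\varphi_n(t)|\le\operatorname{d}(\gamma(s),\gamma(t))$, with equality in the supremum over $n$ by density (choose $x_{n_k}\to\gamma(s)$); therefore $\operatorname{d}(\gamma(s),\gamma(t))=\sup_n|\varphi_n(s)-\varphi_n(t)|\le\sup_n\int_s^t|\varphi_n'(\tau)|\,d\tau\le\int_s^t g(\tau)\,d\tau$, i.e.\ $\gamma\in AC_p([a,b],\operatorname{X})$.

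\emph{Metric speed and minimality (the ``only if'' direction).} Now let $\gamma\in AC_p([a,b],\operatorname{X})$. Again $\gamma$ is continuous, $\Gamma$ is separable, and I keep $\{x_n\}$, $h_n$, $\varphi_n$ as above. For every admissible majorant $g$ (nonnegative, in $\widetilde{L}_p$, satisfying \eqref{eqq.absolute_continuity}), the estimate $|\varphi_n(s)-\varphi_n(t)|\le\int_s^t g$ forces $\varphi_n$ to be absolutely continuous with $|\varphi_n'|\le g$ a.e.; hence the Borel function $m:=\sup_n|\varphi_n'|$ satisfies $m\le g$ a.e., so $m\in\widetilde{L}_p([a,b])$. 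Repeating the density argument of the previous paragraph with $m$ in place of $g$ shows $\operatorname{d}(\gamma(s),\gamma(t))\le\int_s^t m$, so $m$ is itself admissible; and since $m\le g$ a.e.\ for every admissible $g$, it is the $\mathcal{L}^1$-a.e.\ least one. I set $|\dot\gamma|:=m$; then \eqref{eqq.metric_speed_representation} holds with these $h_n\in\operatorname{LIP}_1(\operatorname{X})$. For \eqref{eqq.metric_speed}, at each Lebesgue point $t$ of $|\dot\gamma|$ the admissibility of $|\dot\gamma|$ together with the Lebesgue differentiation theorem gives $\varlimsup_{h\to 0}\operatorname{d}(\gamma(t),\gamma(t+h))/|h|\le|\dot\gamma(t)|$, while $\operatorname{d}(\gamma(t),\gamma(t+h))\ge|\varphi_n(t+h)-\varphi_n(t)|$ gives $\varliminf_{h\to 0}\operatorname{d}(\gamma(t),\gamma(t+h))/|h|\ge|\varphi_n'(t)|$ wherever $\varphi_n'(t)$ exists; taking $\sup_n$ yields the reverse inequality a.e., so the limit exists and equals $|\dot\gamma(t)|$, and consequently $\operatorname{lip}\gamma(t)=|\dot\gamma(t)|$ a.e.

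\emph{The pointwise bound \eqref{eqq.AC_ineq'}.} Fix $h\in\operatorname{LIP}_L(\operatorname{X})$; then $|h(\gamma(s))-h(\gamma(t))|\le L\operatorname{d}(\gamma(s),\gamma(t))\le L\int_s^t|\dot\gamma|$, so $h\circ\gamma\in AC_p([a,b])$ and $(h\circ\gamma)'(t)$ exists for a.e.\ $t$. Take such a $t$ which is moreover a point where \eqref{eqq.metric_speed} holds. If $\gamma$ is constant on a neighbourhood of $t$ — which happens in particular when $\gamma(t)$ is an isolated point of $\operatorname{X}$, by continuity of $\gamma$ — then $(h\circ\gamma)'(t)=0$ and \eqref{eqq.AC_ineq'} is trivial. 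Otherwise $\gamma(t)$ is an accumulation point of $\operatorname{X}$, and restricting to the small $s$ with $\gamma(t+s)\ne\gamma(t)$ I would write
\[
\frac{|h(\gamma(t+s))-h(\gamma(t))|}{|s|}=\frac{|h(\gamma(t+s))-h(\gamma(t))|}{\operatorname{d}(\gamma(t+s),\gamma(t))}\cdot\frac{\operatorname{d}(\gamma(t+s),\gamma(t))}{|s|}.
\]
Letting $s\to0$, continuity of $\gamma$ gives $\gamma(t+s)\to\gamma(t)$, so the $\varlimsup$ of the first factor is at most $\operatorname{lip}h(\gamma(t))$, while the second factor tends to $|\dot\gamma(t)|$ by \eqref{eqq.metric_speed}; hence $|(h\circ\gamma)'(t)|\le\operatorname{lip}h(\gamma(t))\,|\dot\gamma(t)|$, which is \eqref{eqq.AC_ineq'} with $g=|\dot\gamma|$. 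This closes the equivalence, and the case $p=\infty$ requires no change once one uses $AC_\infty=\operatorname{LIP}$ (Remark \ref{Rem.Lipschitz_case}).

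\emph{Main obstacle.} None of the steps is deep; the two points that really need attention are harvesting separability of $\Gamma$ for free from the continuity obtained via Lemma \ref{Lm.continuity} (so that the countable family $\{h_n\}$ of $1$-Lipschitz functions is at our disposal), and the factorisation argument for \eqref{eqq.AC_ineq'}, where one must separately dispose of the degenerate situation in which $\gamma(t)$ is isolated in $\operatorname{X}$ and $\operatorname{lip}h(\gamma(t))=0$ holds merely by convention, so that the difference quotient of $h\circ\gamma$ must be shown to vanish directly.
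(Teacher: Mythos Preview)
Your proof is correct and follows essentially the same route as the paper's: continuity of $\gamma$ via Lemma~\ref{Lm.continuity}, the countable family $h_n=\operatorname{d}(\cdot,x_n)$ built from a dense subset of the compact image $\Gamma$, the identity $\operatorname{d}(\gamma(s),\gamma(t))=\sup_n|\varphi_n(s)-\varphi_n(t)|$, and the sandwich $\varlimsup\le m\le\varliminf$ for the metric speed. Your treatment of \eqref{eqq.AC_ineq'} via the factorisation of the difference quotient, together with the explicit disposal of the isolated-point case, is in fact more detailed than the paper's one-line appeal to Lebesgue differentiation and the definition of $\operatorname{lip}h$.
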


\begin{proof}
Let $\gamma \in AC_{p}([a,b],\operatorname{X})$. By Definitions \ref{Def.3} and \ref{Def.4}, it follows that for each $h \in \operatorname{LIP}_{L}(\operatorname{X})$,
and all $a \le t_{1} \le t_{2} \le b$,
$$
|h\circ\gamma(t_{1}) - h\circ \gamma(t_{2})| \le L \operatorname{d}(\gamma(t_{1}),\gamma(t_{2})) \le L\int\limits_{t_{1}}^{t_{2}}g(\tau)\,d\tau.
$$
Hence, $h \circ \gamma \in AC_{p}([a,b])$. Since $AC_{p}([a,b]) \subset AC_{1}([a,b])$, given $h \in \operatorname{LIP}(\operatorname{X})$, the function $h \circ \gamma$
is differentiable $\mathcal{L}^{1}$-a.e. in the classical sense. Furthermore, using the Lebesgue differentiation theorem and keeping in mind \eqref{eqq.local_lip} we deduce
\eqref{eqq.AC_ineq'}.

Conversely, assume that there exists a nonnegative Borel function $g \in L_{p}([a,b])$ such that for every $h \in \operatorname{LIP}(\operatorname{X})$ the postcomposition $h \circ \gamma$
belongs to $AC_{p}([a,b])$ and \eqref{eqq.AC_ineq'} holds. By Lemma \ref{Lm.continuity} the map $\gamma$ is in fact belongs to $C([a,b],\operatorname{X})$.
Hence, the metric space $(\Gamma,\operatorname{d}|_{\Gamma})$ is complete and separable. Let $\{x_{n}\} \subset \Gamma$ be an arbitrary countable dense subset of $\Gamma$.
Given $n \in \mathbb{N}$, we put $h_{n}(x):=\operatorname{d}(x_{n},x)$, $x \in \operatorname{X}$. Clearly, $\{h_{n}\} \subset \operatorname{LIP}_{1}(\operatorname{X})$ and, consequently,
$\operatorname{lip}h_{n}(x) \le 1$ for all $x \in \operatorname{X}$. As a result,
\begin{equation}
\label{eqq.AGS_estimate}
\operatorname{d}(\gamma(t_{1}),\gamma(t_{2})) = \sup\limits_{n \in \mathbb{N}} |h_{n}\circ\gamma(t_{1}) - h_{n}\circ \gamma(t_{2})| \le \int\limits_{t_{1}}^{t_{2}}\sup\limits_{n \in \mathbb{N}}|(h_{n} \circ \gamma)'(t)|\,dt \le
\int\limits_{t_{1}}^{t_{2}}g(t)\,dt.
\end{equation}
This chain of estimates clearly implies that $m(t):=\sup_{n \in \mathbb{N}}|(h_{n} \circ \gamma)'(t)| \le g(t)$ for $\mathcal{L}^{1}$-a.e. $t \in [a,b]$.
In particular, the curve $\gamma$ belongs to the class $AC_{p}([a,b],\operatorname{X})$.

Furthermore, by \eqref{eqq.AGS_estimate} we have $m \in \widetilde{L}_{p}([a,b])$ and $m$ is the least - in the $\mathcal{L}^{1}$-a.e. sense - function $g$ for which \eqref{eqq.absolute_continuity}
holds. Using \eqref{eqq.AGS_estimate} again, we deduce that
\begin{equation}
\label{eqq.right}
\varlimsup\limits_{t' \to t}\frac{\operatorname{d}(\gamma(t),\gamma(t'))}{|t-t'|} \le m(t) \quad \hbox{for $\mathcal{L}^{1}$-a.e. $t \in [a,b]$}.
\end{equation}
At the same time, for every $n \in \mathbb{N}$,
\begin{equation}
\label{eqq.left}
\varliminf\limits_{t' \to t}\frac{\operatorname{d}(\gamma(t),\gamma(t'))}{|t-t'|} \geq \varliminf\limits_{t' \to t}\frac{|h_{n}\circ\gamma(t)-h_{n}\circ\gamma(t')|}{|t-t'|} = |(h_{n} \circ \gamma)'(t)|  \quad \hbox{for $\mathcal{L}^{1}$-a.e. $t \in [a,b]$}.
\end{equation}
Taking the supremum over $n \in \mathbb{N}$ in \eqref{eqq.left}, we obtain the inequality opposite to \eqref{eqq.right}.
As a result, we arrive at \eqref{eqq.metric_speed} and \eqref{eqq.metric_speed_representation}.

The lemma is proved.
\end{proof}

There is an equivalent definition of absolutely continuous metric-valued curves which is closer in spirit to the classical definition of absolutely continuous functions \cite{Duda}.
Furthermore, there exists the Bahach--Zarecki-type characterization of absolutely continuous curves given in \cite{Duda} (see also \cite{DZ} for a similar result).
We summarize these facts in the following assertion.

\begin{Prop}
\label{Prop.BZ}
Let $\gamma:[a,b] \to \operatorname{X}$ be a map. The following properties are equivalent:

\begin{itemize}

\item[\((AC1)\)] $\gamma$ belongs to the class $AC_{1}([a,b],\operatorname{X})$;

\item[\((AC2)\)] for each $\varepsilon > 0$ there is $\delta(\varepsilon) > 0$ such that $\sum_{i=1}^{N}\operatorname{d}(\gamma(a_{i}),\gamma(b_{i})) < \varepsilon$ for any finite family $\{[a_{i},b_{i}]\}_{i=1}^{N}$ of noneoverlapping intervals in $[a,b]$ with $\sum_{i=1}^{N}|a_{i}-b_{i}| < \delta(\varepsilon)$;

\item[\((AC3)\)] $\gamma$ belongs to the intersection $C([a,b],\operatorname{X}) \cap BV([a,b],\operatorname{X})$ and
$\gamma$ satisfies the $N$-Luzin property, i.e., given $E \subset [a,b]$, $\mathcal{H}^{1}(\gamma(E))=0$ whenever $\mathcal{L}^{1}(E)=0$.

\end{itemize}

\end{Prop}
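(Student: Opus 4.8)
The plan is to establish the equivalences in the cyclic order $(AC1)\Rightarrow(AC2)\Rightarrow(AC3)\Rightarrow(AC1)$, the last implication being the only substantial one. \emph{For $(AC1)\Rightarrow(AC2)$}: fix a nonnegative $g\in\widetilde{L}_{1}([a,b])$ satisfying \eqref{eqq.absolute_continuity}; for any finite family $\{[a_{i},b_{i}]\}_{i=1}^{N}$ of nonoverlapping intervals one has $\sum_{i}\operatorname{d}(\gamma(a_{i}),\gamma(b_{i}))\le\sum_{i}\int_{a_{i}}^{b_{i}}g=\int_{\bigcup_{i}[a_{i},b_{i}]}g$, and since $E\mapsto\int_{E}g$ is absolutely continuous with respect to $\mathcal{L}^{1}$, given $\varepsilon>0$ there is $\delta(\varepsilon)>0$ making the right-hand side $<\varepsilon$ as soon as $\sum_{i}|a_{i}-b_{i}|<\delta(\varepsilon)$.

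\emph{For $(AC2)\Rightarrow(AC3)$}: taking $N=1$ in $(AC2)$ shows $\gamma$ is uniformly continuous, so $\gamma\in C([a,b],\operatorname{X})$. Chopping $[a,b]$ into finitely many closed subintervals of length $<\delta(1)$ and applying $(AC2)$ to arbitrary partitions of each of them gives variation $\le1$ on each, whence $V_{\gamma}([a,b])<+\infty$ and $\gamma\in BV([a,b],\operatorname{X})$. For the $N$-Luzin property, let $\mathcal{L}^{1}(E)=0$ and $\varepsilon>0$; pick an open $U\supset E$ with $\mathcal{L}^{1}(U)<\delta(\varepsilon)$ and let $\{J_{j}\}$ be the nonempty intervals obtained by intersecting the components of $U$ with $[a,b]$, so $E\subset\bigcup_{j}J_{j}$ and $\sum_{j}|J_{j}|<\delta(\varepsilon)$. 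Given a scale $\sigma>0$, uniform continuity of $\gamma$ lets us partition each $J_{j}$ into closed subintervals $J_{j,k}$ with $\operatorname{diam}\gamma(J_{j,k})<\sigma$; the sets $\gamma(J_{j,k})$ cover $\gamma(E)$ by pieces of diameter $<\sigma$, and applying $(AC2)$ to every finite subcollection of the nonoverlapping subintervals of the $J_{j,k}$ realizing $\operatorname{diam}\gamma(J_{j,k})$ (attained by compactness) yields $\sum_{j,k}\operatorname{diam}\gamma(J_{j,k})\le\varepsilon$. Hence $\mathcal{H}^{1}_{\sigma}(\gamma(E))\le\varepsilon$ for every $\sigma>0$, so $\mathcal{H}^{1}(\gamma(E))\le\varepsilon$, and letting $\varepsilon\to0$ gives $\mathcal{H}^{1}(\gamma(E))=0$.

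\emph{For $(AC3)\Rightarrow(AC1)$} (the heart of the matter): set $v(t):=V_{\gamma}([a,t])$, a nondecreasing function with $v(a)=0$, $v(b)<+\infty$, and $\operatorname{d}(\gamma(t_{1}),\gamma(t_{2}))\le v(t_{2})-v(t_{1})$ for $t_{1}\le t_{2}$; the continuity of $\gamma$ forces $v$ to be continuous (a jump of $v$ at $t_{0}$ would contradict $\operatorname{d}(\gamma(t_{0}),\gamma(t))\to0$ via near-optimal partitions of a one-sided neighbourhood of $t_{0}$). Let $\mu_{v}$ be the nonatomic finite Borel measure on $[a,b]$ with $\mu_{v}([s,t])=v(t)-v(s)$. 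Applying Proposition \ref{Prop.integral_repres_for_variation} to $\gamma|_{[s,t]}$ gives $\mu_{v}([s,t])=\int_{\Gamma}\#(\gamma^{-1}(y)\cap[s,t])\,d\mathcal{H}^{1}(y)$; both sides being finite Borel measures on $[a,b]$ that agree on intervals, the identity $\mu_{v}(F)=\int_{\Gamma}\#(\gamma^{-1}(y)\cap F)\,d\mathcal{H}^{1}(y)$ holds for every Borel $F\subset[a,b]$. Now if $\mathcal{L}^{1}(E)=0$, then $\mathcal{H}^{1}(\gamma(E))=0$ by $(AC3)$, so the integrand (supported on $\gamma(E)$) vanishes $\mathcal{H}^{1}$-a.e.\ on $\Gamma$ for $F=E$; thus $\mu_{v}(E)=0$, and since covering $E$ by intervals nearly realizing $\mu_{v}(E)$ and using the monotonicity of $v$ gives $\mathcal{L}^{1}(v(E))\le\mu_{v}(E)$, we obtain $\mathcal{L}^{1}(v(E))=0$. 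So $v$ is a continuous nondecreasing function with the $N$-Luzin property, hence absolutely continuous by the classical scalar Banach--Zarecki theorem; writing $v(t)=\int_{a}^{t}v'(\tau)\,d\tau$ with $0\le v'\in L_{1}([a,b])$ and using $\operatorname{d}(\gamma(t_{1}),\gamma(t_{2}))\le v(t_{2})-v(t_{1})=\int_{t_{1}}^{t_{2}}v'$, we conclude $\gamma\in AC_{1}([a,b],\operatorname{X})$. One can instead bypass the indicatrix identity by invoking Proposition \ref{Prop.rectifiability} to reduce to a countable family of injective Lipschitz arcs and arguing through arc-length parametrizations together with Remark \ref{Rem.1}.

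The main obstacle is precisely this last implication: establishing the continuity of the variation function $v$, upgrading the global length-indicatrix formula of Proposition \ref{Prop.integral_repres_for_variation} to a genuine measure identity on all Borel subsets of $[a,b]$ — which tacitly uses the $\mathcal{H}^{1}$-measurability of $y\mapsto\#(\gamma^{-1}(y)\cap F)$, a standard but not wholly trivial point — and then quoting the scalar Banach--Zarecki theorem. The implications $(AC1)\Rightarrow(AC2)\Rightarrow(AC3)$ are routine.
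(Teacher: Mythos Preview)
Your proof is correct. The paper itself does not supply a proof of Proposition~\ref{Prop.BZ}; it is stated as a known fact with references to \cite{Duda} and \cite{DZ}, so there is no in-paper argument to compare against. Your self-contained treatment---in particular the route for $(AC3)\Rightarrow(AC1)$ through the variation function $v(t)=V_{\gamma}([a,t])$, the indicatrix identity of Proposition~\ref{Prop.integral_repres_for_variation}, and the reduction to the scalar Banach--Zarecki theorem---is essentially the argument found in those cited references. One small simplification: once you have shown $\mu_{v}(E)=0$ for every Borel $E$ with $\mathcal{L}^{1}(E)=0$, you already have $\mu_{v}\ll\mathcal{L}^{1}$, and Radon--Nikodym immediately yields $v(t)-v(a)=\int_{a}^{t}g$ with $g\in\widetilde{L}_{1}([a,b])$; the subsequent passage through $\mathcal{L}^{1}(v(E))\le\mu_{v}(E)$ and the scalar theorem is correct but redundant. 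The $\mathcal{H}^{1}$-measurability of $y\mapsto\#(\gamma^{-1}(y)\cap F)$ for general Borel $F$ that you flag is indeed the only genuinely delicate point; it is covered by Federer's framework (the material surrounding Theorem~2.10.13 in \cite{Federer}) or by standard analytic-set arguments.
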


There are several equivalent approaches to the concept of Sobolev curves. We refer the reader to the recent
papers \cite{GT1,GT2}. We prefer the approach closer to Definition \ref{Def.4}.

\begin{Def}
\label{Def.Sobolev_curve}
Assume that $(\operatorname{Y},\rho)$ is a complete separable metric space. Given $p \in (1,\infty)$, we say that the equivalence class $[\gamma]$
of a Borel map $\gamma:[a,b] \to \operatorname{Y}$ belongs
to the Sobolev $W_{p}^{1}([a,b],\operatorname{Y})$-space if $[\gamma] \in L_{p}([a,b],\operatorname{Y})$ and there exists a nonnegative function $G \in \widetilde{L}_{p}([a,b])$ with a Borel
set $E \subset [a,b]$ with $\mathcal{L}^{1}([a,b]\setminus E)=0$ such that
\begin{equation}
\label{eqq.Sobolev_definition}
\operatorname{d}(\gamma(t_{1}),\gamma(t_{2})) \le \int\limits_{t_{1}}^{t_{2}}G(t)\,dt \qquad \hbox{for all   $t_{1}, t_{2} \in E$ with $t_{1} \le t_{2}$}.
\end{equation}
\end{Def}

\begin{Remark}
In the case $(\operatorname{Y},\rho)=(\mathbb{R},|\cdot|)$, we write $W_{p}^{1}([a,b])$ instead of $W_{p}^{1}([a,b],\mathbb{R})$. One can easily show that,
in this case, $W_{p}^{1}([a,b])$ can be naturally identified with the classical Sobolev space $W_{p}^{1}((a,b))$ on the open interval $(a,b)$.
\end{Remark}

Keeping in mind \eqref{eqq.equivalence_class} and the equivalence of different approaches to Sobolev classes proved in Corollary 3.10 of \cite{GT2} and Theorem 2.11 from \cite{GT1}, we
obtain
the following characterization.
\begin{Prop}
\label{Prop.Sobolev_characterization}
Let $(\operatorname{X},\operatorname{d})$ be a complete separable metric space and $p \in (1,\infty)$. For each $\gamma \in \mathfrak{B}([a,b],\operatorname{X})$,
the following conditions are equivalent:

\begin{itemize}

\item[\((W1)\)] the class $[\gamma]$ belongs to $W_{p}^{1}([a,b],\operatorname{X})$;

\item[\((W2)\)] the intersection $[\gamma]  \cap AC_{p}([a,b],\operatorname{X})$ is not empty;

\item[\((W3)\)] for every $h \in \operatorname{LIP}(\operatorname{X})$, the class $[h \circ \gamma]$ belongs to $W_{p}^{1}([a,b])$ and there is
a nonnegative Borel function $G \in \widetilde{L}_{p}([a,b])$, whose equivalence class $[G]$ is called a $p$-weak upper gradient of $\gamma$, such that
\begin{equation}
\label{eqq.Sobolev_ineq'}
\bigl|\partial [h \circ \gamma](t)\bigr| \le [\operatorname{lip}(h \circ \gamma)](t) [G](t) \quad \hbox{for $\mathcal{L}^{1}$-a.e. $t \in [a,b]$},
\end{equation}
where $\partial [h \circ \gamma]$ is the usual Sobolev distributional derivative of the class $[h \circ \gamma]$.

\end{itemize}

Furthermore, if $[\gamma] \in W_{p}^{1}([a,b],\operatorname{X})$, then the family $\{[\frac{\operatorname{d}(\gamma(\cdot),\gamma(\cdot+h))}{h}]\}_{h \in \mathbb{R} \setminus \{0\}}$ (we formally put $\frac{\operatorname{d}(\gamma(t),\gamma(t+h))}{h}=0$ if $t+h \notin [a,b]$)
has a strong limit $|\partial [\gamma]|$ - called distributional derivative of
$[\gamma]$ - in $L_{p}([a,b])$ as $h \to 0$. Moreover, if $\overline{\gamma}$ is a unique continuous representative of $[\gamma]$, then
$|\partial [\gamma] (t)| = |\dot{\overline{\gamma}}(t)|$ for $\mathcal{L}^{1}$-a.e. $t \in [a,b]$.

\end{Prop}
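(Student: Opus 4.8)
The plan is to prove the three equivalences by the cycle (W2)$\Rightarrow$(W1)$\Rightarrow$(W2) together with (W2)$\Leftrightarrow$(W3), reducing everything to Lemma \ref{Lm.elementary_characterization} and the classical identification of real-valued Sobolev functions on an interval with absolutely continuous ones, and then to establish the ``furthermore'' part by a maximal-function domination argument, which is the only place where $p>1$ is genuinely used.

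First I would treat (W2)$\Leftrightarrow$(W1). The implication (W2)$\Rightarrow$(W1) is immediate: any representative $\widetilde{\gamma}\in[\gamma]\cap AC_{p}([a,b],\operatorname{X})$ satisfies Definition \ref{Def.Sobolev_curve} with $E=[a,b]$ and $G=|\dot{\widetilde{\gamma}}|$. The content lies in (W1)$\Rightarrow$(W2). Given the full-measure Borel set $E$ and the majorant $G$ from Definition \ref{Def.Sobolev_curve}, I set $F(t):=\int_{a}^{t}G$, so that the defining inequality reads $\operatorname{d}(\gamma(t_{1}),\gamma(t_{2}))\le F(t_{2})-F(t_{1})$ for $t_{1}\le t_{2}$ in $E$. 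Hence $\gamma|_{E}$ is uniformly continuous with a modulus governed by the continuous function $F$. Since $E$ is dense in $[a,b]$ and $\operatorname{X}$ is complete, $\gamma|_{E}$ extends to a continuous map $\overline{\gamma}:[a,b]\to\operatorname{X}$; passing to the limit in the inequality yields $\operatorname{d}(\overline{\gamma}(t_{1}),\overline{\gamma}(t_{2}))\le\int_{t_{1}}^{t_{2}}G$ for all $t_{1}\le t_{2}$, i.e.\ $\overline{\gamma}\in AC_{p}([a,b],\operatorname{X})$, and $\overline{\gamma}=\gamma$ on $E$ forces $\overline{\gamma}\in[\gamma]$. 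This is precisely where completeness of $\operatorname{X}$ enters.

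Next I would treat (W2)$\Leftrightarrow$(W3). For (W2)$\Rightarrow$(W3), fix $\widetilde{\gamma}\in[\gamma]\cap AC_{p}([a,b],\operatorname{X})$ and apply Lemma \ref{Lm.elementary_characterization} with $G:=|\dot{\widetilde{\gamma}}|$: for each $h\in\operatorname{LIP}(\operatorname{X})$ we get $h\circ\widetilde{\gamma}\in AC_{p}([a,b])$ with $|\tfrac{d(h\circ\widetilde{\gamma})}{dt}|\le\operatorname{lip}h(\widetilde{\gamma})\,G$ a.e.; since $h\circ\widetilde{\gamma}$ is a representative of $[h\circ\gamma]$, the classical identification $AC_{p}([a,b])=W_{p}^{1}([a,b])$ gives $[h\circ\gamma]\in W_{p}^{1}([a,b])$ with $\partial[h\circ\gamma]=(h\circ\widetilde{\gamma})'$ a.e., which is \eqref{eqq.Sobolev_ineq'}. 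For the converse (W3)$\Rightarrow$(W1) I would repeat the argument from the proof of Lemma \ref{Lm.elementary_characterization}: by separability of $\operatorname{X}$ pick a countable dense $\{x_{n}\}$ and set $h_{n}:=\operatorname{d}(x_{n},\cdot)\in\operatorname{LIP}_{1}(\operatorname{X})$. Each $[h_{n}\circ\gamma]\in W_{p}^{1}([a,b])$ admits an $AC_{p}$ representative $f_{n}$ with $|f_{n}'|\le\operatorname{lip}h_{n}(\gamma)\,G\le G$ a.e., so on the common full-measure set $E:=[a,b]\setminus\bigcup_{n}\{f_{n}\neq h_{n}\circ\gamma\}$ we have, for $t_{1}\le t_{2}$ in $E$,
\[
|h_{n}(\gamma(t_{1}))-h_{n}(\gamma(t_{2}))|=|f_{n}(t_{1})-f_{n}(t_{2})|\le\int_{t_{1}}^{t_{2}}G.
\]
Taking the supremum over $n$ and using density to identify $\sup_{n}|h_{n}(\gamma(t_{1}))-h_{n}(\gamma(t_{2}))|=\operatorname{d}(\gamma(t_{1}),\gamma(t_{2}))$ recovers \eqref{eqq.Sobolev_definition}; boundedness of $\gamma|_{E}$ then gives $[\gamma]\in L_{p}([a,b],\operatorname{X})$, so (W1) holds, and the previous step upgrades this to (W2). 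The minimality of $G$ and the name ``$p$-weak upper gradient'' are read off from \cite{GT1,GT2}.

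Finally, for the ``furthermore'' part I would pass to the continuous representative $\overline{\gamma}$ produced above and invoke Lemma \ref{Lm.elementary_characterization} to obtain the pointwise a.e.\ limit $\operatorname{d}(\overline{\gamma}(t),\overline{\gamma}(t+h))/|h|\to|\dot{\overline{\gamma}}(t)|$ with $|\dot{\overline{\gamma}}|\in\widetilde{L}_{p}([a,b])$, and set $|\partial[\gamma]|:=|\dot{\overline{\gamma}}|$. To upgrade a.e.\ convergence to a strong $L_{p}$ limit I would dominate the difference quotient by the average $\frac{1}{|h|}\int_{t}^{t+h}|\dot{\overline{\gamma}}|$, hence by the Hardy--Littlewood maximal function of $|\dot{\overline{\gamma}}|$; since $p>1$, this maximal function lies in $L_{p}$, so Vitali's theorem (uniform integrability of the $p$-th powers) promotes the a.e.\ limit to a strong $L_{p}$ limit, and the identity $|\partial[\gamma](t)|=|\dot{\overline{\gamma}}(t)|$ a.e.\ holds by construction. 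I expect the main obstacle to be twofold: the completeness-based extension in (W1)$\Rightarrow$(W2), where one must verify that the continuous extension of $\gamma|_{E}$ is genuinely $AC_{p}$ and $\mathcal{L}^{1}$-a.e.\ equal to $\gamma$; and the strong-$L_{p}$ step of the ``furthermore'' part, which is exactly where the restriction $p>1$ is indispensable, since the maximal operator is unbounded on $L_{1}$.
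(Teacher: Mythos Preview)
The paper does not supply a proof of this proposition at all: it is stated as a direct consequence of results quoted from the literature, namely Theorem~2.11 in \cite{GT1} and Corollary~3.10 in \cite{GT2}. Your proposal, by contrast, gives a self-contained argument, and the argument is correct. The cycle (W1)$\Leftrightarrow$(W2) via the completeness-based extension of $\gamma|_{E}$, the implication (W2)$\Rightarrow$(W3) via Lemma~\ref{Lm.elementary_characterization}, and (W3)$\Rightarrow$(W1) via the countable family of distance functions $h_{n}=\operatorname{d}(x_{n},\cdot)$ are all sound; the only point worth flagging is that in the paper's statement of (W3) the expression $\operatorname{lip}(h\circ\gamma)$ should be read as $(\operatorname{lip}h)\circ\gamma$ (compare \eqref{eqq.AC_ineq'}), which is how you have interpreted it. The maximal-function domination for the strong $L_{p}$ convergence is the standard device and correctly isolates the role of $p>1$.

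What your approach buys is independence from the external references and a transparent account of where completeness, separability, and the restriction $p>1$ each enter. What the paper's citation approach buys is brevity and consistency with the general framework of \cite{GT1,GT2}, where these equivalences are developed for metric-valued Sobolev maps in broader generality; the present proposition is really a specialization of that machinery to the one-dimensional domain $[a,b]$, where the arguments simplify exactly as you have written them.
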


\section{Tracking $\mathcal{H}^{1}$-measure via post-compositions with Lipschitz functions.}

First of all, we start with a rather simple observation from the classical infinite dimensional analysis, which will be useful below.
We prefer to present a direct and explicit proof instead of using Baire-type arguments. Our proof has the same flavor as the construction
of a continuous function whose Fourier series diverges at some point.

Given a Banach space $(\operatorname{E},\|\cdot\|)$, we say that a nonnegative functional $p:\operatorname{E} \to [0,+\infty)$ is \textit{countably subadditive} if
$$
p(\sum\limits_{i=1}^{\infty}z_{i}) \le \sum_{i=1}^{\infty}p(z_{i})
$$
for any unconditionally convergent series $\sum_{i=1}^{\infty}z_{i}$ in $\operatorname{E}$.

\begin{Prop}
\label{Prop.Banach_Steinhaus_substitution}
Let $\operatorname{E}:=(\operatorname{E},\|\cdot\|)$ be a Banach space. Let $\{p_{m}\}_{m=1}^{\infty}$ be a sequence of countably subadditive nonnegative positively homogeneous functionals on $\operatorname{E}$ such that
%for some $L > 0$
$$
\operatorname{M}:=\sup\limits_{m \in \mathbb{N}}\sup\limits_{\|z\| \le 1}p_{m}(z) = +\infty.
$$
Then there exists $\underline{z} \in \operatorname{E}$, $\|\underline{z}\| \le 1$, such that $\sup_{m \in \mathbb{N}}p_{m}(\underline{z}) = +\infty$.
\end{Prop}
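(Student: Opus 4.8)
The plan is to mimic the classical ``gliding hump'' construction of a function whose Fourier series diverges at a point, adapting it to the abstract setting of countably subadditive positively homogeneous functionals. The unboundedness hypothesis $\operatorname{M} = +\infty$ provides, for each $k \in \mathbb{N}$, an index $m_{k} \in \mathbb{N}$ and an element $z_{k} \in \operatorname{E}$ with $\|z_{k}\| \le 1$ such that $p_{m_{k}}(z_{k})$ is as large as we please; the goal is to assemble a single element $\underline{z}$ out of suitably scaled copies of the $z_{k}$'s so that $p_{m_{k}}(\underline{z})$ remains large for infinitely many $k$.

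First I would fix a rapidly converging sequence of weights, say $\lambda_{k} := 2^{-k}$, so that $\underline{z} := \sum_{k=1}^{\infty}\lambda_{k} z_{k}$ is an (unconditionally, even absolutely) convergent series in $\operatorname{E}$ with $\|\underline{z}\| \le \sum_{k}\lambda_{k} \le 1$ (one may rescale to get $\le 1$ exactly, or start the sum at a large index). The key point is the countable subadditivity applied to the splitting
\begin{equation}
\notag
\underline{z} = \lambda_{k} z_{k} + \sum\limits_{j \neq k}\lambda_{j} z_{j},
\end{equation}
which gives, using positive homogeneity,
\begin{equation}
\notag
\lambda_{k} p_{m_{k}}(z_{k}) = p_{m_{k}}(\lambda_{k} z_{k}) \le p_{m_{k}}(\underline{z}) + \sum\limits_{j \neq k}\lambda_{j} p_{m_{k}}(z_{j}).
\end{equation}
The subtlety is that the tail term $\sum_{j \neq k}\lambda_{j} p_{m_{k}}(z_{j})$ need not be small: we have no a priori bound on $p_{m_{k}}(z_{j})$ for $j \neq k$. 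This is the main obstacle, and the standard way around it is to choose the data inductively. Having chosen $z_{1},\dots,z_{k-1}$ and $m_{1},\dots,m_{k-1}$, one uses $\operatorname{M} = +\infty$ to pick $z_{k}$ (with $\|z_{k}\| \le 1$) and then $m_{k}$ so that $p_{m_{k}}(z_{k})$ exceeds a target of order $\lambda_{k}^{-1}(C + k)$ for a suitable constant $C$; crucially, the ``past'' contributions $\sum_{j < k}\lambda_{j} p_{m_{k}}(z_{j})$ involve only finitely many already-fixed vectors $z_{j}$, but the index $m_{k}$ is chosen \emph{after} the $z_{j}$, $j<k$, so one must instead bound these by first choosing $m_{k}$ large — no, more carefully: one chooses $z_{k}$ and $m_{k}$ together so that $\lambda_k p_{m_k}(z_k)$ is huge, and separately controls the ``future'' tail $\sum_{j>k}\lambda_j p_{m_k}(z_j)$ by noting this is not yet determined.

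The clean fix is to reverse the roles: build the sequence so that the later vectors $z_{j}$, $j > k$, are chosen with weights $\lambda_{j}$ decaying fast enough relative to bounds on $p_{m_{k}}$. Concretely, after choosing $z_{1},\dots,z_{k}$ and $m_{1},\dots,m_{k}$, the quantities $A_{k} := \max_{1 \le i \le k} p_{m_{i}}(z_{\le k})$-type partial data are finite; then choose $\lambda_{k+1}$ small enough (in addition to $\le 2^{-(k+1)}$) that $\lambda_{k+1}(p_{m_{1}}(z_{k+1}) + \dots + p_{m_{k}}(z_{k+1})) \le 2^{-(k+1)}$ — but this requires knowing $z_{k+1}$ first. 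So the correct induction order is: choose $m_{k}$, then $z_{k}$ with $\|z_{k}\|\le 1$ and $p_{m_k}(z_k)$ enormous, then choose $\lambda_{k} > 0$ small (depending on $z_{1},\dots,z_{k}$ and $m_{1},\dots,m_{k-1}$) so that $\lambda_{k}\sum_{i<k}p_{m_{i}}(z_{k}) \le 2^{-k}$, and simultaneously so that $\lambda_{k} p_{m_{k}}(z_{k}) \ge k + \sum_{i<k}2^{-i} + 1$ and $\sum_k \lambda_k \le 1$ — all three are compatible by taking $p_{m_k}(z_k)$ large enough at the previous step and $\lambda_k$ in a suitable window. Then for each $k$,
\begin{equation}
\notag
p_{m_{k}}(\underline{z}) \ge \lambda_{k} p_{m_{k}}(z_{k}) - \sum\limits_{i < k}\lambda_{i} p_{m_{k}}(z_{i}) - \sum\limits_{i > k}\lambda_{i} p_{m_{k}}(z_{i}) \ge (k+1) - \sum\limits_{i < k}\lambda_i p_{m_k}(z_i) - 1 \ge k - 1,
\end{equation}
where the first subtracted sum is controlled because $m_{k}$ was fixed before $z_{i}$, $i>k$... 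Since this ordering tension is exactly the delicate bookkeeping, I would present it as: run the induction choosing at stage $k$ the triple $(m_{k}, z_{k}, \lambda_{k})$ in that order, with $\lambda_{k}$ chosen last and small enough to kill the finitely many cross terms $\lambda_{k} p_{m_{i}}(z_{k})$ for $i < k$, and with $p_{m_k}(z_k)$ — chosen at stage $k$ to be so large that even after multiplying by the yet-to-be-chosen small $\lambda_k$ we can still guarantee $\lambda_k p_{m_k}(z_k) \ge k+2$ while keeping $\lambda_k \le 2^{-k}$; this is possible since $p_{m_k}$ ranges over arbitrarily large values on the unit ball. The terms $\lambda_i p_{m_k}(z_i)$ with $i > k$ are each $\le 2^{-i}$ by the stage-$i$ choice of $\lambda_i$ (applied with the fixed earlier index $m_k$), so their sum over $i>k$ is $\le 2^{-k} \le 1$, and the terms with $i<k$ sum to $\le \sum_{i<k}2^{-k} < 1$. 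Hence $p_{m_{k}}(\underline{z}) \ge (k+2) - 1 - 1 = k \to \infty$, so $\sup_{m} p_{m}(\underline{z}) = +\infty$, which proves the proposition. I expect the genuine difficulty to be purely the careful sequencing of the three choices at each stage so that every cross term is controlled; once the induction is set up correctly, the estimate above is immediate.
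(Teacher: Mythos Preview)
Your approach is the same ``gliding hump'' construction as the paper's, and the overall plan is sound. However, there is a genuine gap in your handling of the \emph{past} cross terms $\sum_{i<k}\lambda_{i}\,p_{m_{k}}(z_{i})$. Your inductive choice at stage $k$ makes $\lambda_{k}\,p_{m_{i}}(z_{k})$ small for all \emph{earlier} indices $i<k$; this correctly controls the \emph{future} tail $\sum_{i>k}\lambda_{i}\,p_{m_{k}}(z_{i})$ (apply the stage-$i$ condition with the earlier index $m_{k}$). But it says nothing about $\lambda_{i}\,p_{m_{k}}(z_{i})$ for $i<k$: at stage $i$ the index $m_{k}$ had not yet been chosen, so your condition (b) cannot apply. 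Your claim that ``the terms with $i<k$ sum to $\le\sum_{i<k}2^{-k}<1$'' is therefore unjustified, and the sentence ``the first subtracted sum is controlled because $m_{k}$ was fixed before $z_{i}$, $i>k$'' confuses the two tails.

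The paper closes exactly this gap with an extra requirement: at stage $j+1$ one demands not only that the main term $\alpha_{j+1}p_{m_{j+1}}(z_{m_{j+1}})$ exceed a fixed target, but that it exceed a constant multiple of the \emph{already computed} past sum $\sum_{i\le j}\alpha_{i}\,p_{m_{j+1}}(z_{m_{i}})$ (which is a finite number once $m_{j+1}$ is fixed, since $\alpha_{1},\dots,\alpha_{j}$ and $z_{m_{1}},\dots,z_{m_{j}}$ are already chosen). In other words, the past tail is not made small; instead the main term is made large enough to absorb it. The paper also first replaces $p_{m}$ by $\widetilde{p}_{m}:=\max_{1\le k\le m}p_{k}$ to make the sequence monotone, which is what allows the future tail $\sum_{i\ge j+2}\alpha_{i}\,p_{m_{j+1}}(z_{m_{i}})$ to be bounded by $\sum_{i\ge j+2}\alpha_{i}\,p_{m_{i-1}}(z_{m_{i}})\le\sum_{i\ge j+2}2^{-(i-1)}$. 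Your scheme can be repaired by adding precisely this ``domination'' condition on the main term; without it, the final inequality $p_{m_{k}}(\underline{z})\ge k$ does not follow.
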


\begin{proof}
Without loss of generality we may assume that there is a sequence $\{z_{m}\} \subset \operatorname{E}$ such that $\|z_{m}\| \le 1$  and
$p_{m}(z_{m}) > m$ for all $m \in \mathbb{N}$. Furthermore, replacing $p_{m}$ by $\widetilde{p}_{m}:=\max_{1 \le k \le m}p_{k}$ if necessary,
we may assume that $p_{j+1} \geq p_{j}$ (in the pointwise sense) for all $j \in \mathbb{N}$.

We put $\alpha_{1}=1/2$ and $m_{1}=1$. Arguing by induction it is easy to
built a strictly decreasing sequence $\{\alpha_{j}\} \subset (0,+\infty)$ and a strictly increasing sequence $\{m_{j}\} \subset \mathbb{N}$
such that
$$
\max\{\alpha_{j+1},\alpha_{j+1}p_{m_{j}}(z_{m_{j+1}})\} \le 2^{-j} \quad \hbox{for every} \quad j \in \mathbb{N},
$$
and, furthermore,
\begin{equation}
\notag
\alpha_{j+1}p_{m_{j+1}}(z_{m_{j+1}}) \geq 3\max\Bigl\{j,\sum\limits_{i=1}^{j}\alpha_{i}p_{m_{j}}(z_{m_{i}})\Bigr\} \quad \hbox{for every} \quad j \in \mathbb{N}.
\end{equation}

We put $\underline{z}:=\sum_{i=1}^{\infty}\alpha_{i}z_{m_{i}}.$
Using the subadditivity and positive homogeneity of the functionals $p_{m}$, $m \in \mathbb{N}$, we obtain, for each $j \in \mathbb{N}$,
\begin{equation}
\notag
\begin{split}
&\sup_{m \in \mathbb{N}}p_{m}(\underline{z}) \geq p_{m_{j+1}}(\sum_{i=1}^{\infty}\alpha_{i}z_{m_{i}}) \geq \alpha_{j+1}p_{m_{j+1}}(z_{m_{j+1}}) - \sum\limits_{i=1}^{j}\alpha_{i}p_{m_{j+1}}(z_{m_{i}}) -
\sum\limits_{i=j+2}^{\infty}\alpha_{i}p_{m_{j+1}}(z_{m_{i}})\\
&\geq 2j - \sum\limits_{i=j+2}^{\infty}\alpha_{i}p_{m_{i-1}}(z_{m_{i}}) \geq j.
\end{split}
\end{equation}
Since $j$ can be chosen arbitrarily large, the claim follows.
\end{proof}

\begin{Lm}
\label{Lm.Banah_Steinhauz}
Let $\gamma:[a,b] \to \operatorname{X}$ and $\Gamma:=\gamma([a,b])$. Let $\{h_{m}\}_{m=1}^{\infty} \subset \operatorname{LIP}^{\rm str}(\Gamma)$ be such that
$\sup_{m \in \mathbb{N}}\|h_{m}\|_{\operatorname{LIP}^{\rm str}(\Gamma)} \le L$ for some $L \in (0,+\infty)$ and $\sup_{m \in \mathbb{N}} V_{h_{m} \circ \gamma}([a,b]) = +\infty.$
Then there is $\underline{h} \in \operatorname{LIP}^{\rm str}(\Gamma) \cap \operatorname{LIP}(\operatorname{X})$ with $\|\underline{h}\|_{\operatorname{LIP}^{\rm str}(\Gamma)} \le L$ such that $\underline{h} \circ \gamma \notin BV([a,b])$.
\end{Lm}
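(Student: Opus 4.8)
The plan is to reduce the assertion to the abstract Banach--Steinhaus-type Proposition \ref{Prop.Banach_Steinhaus_substitution} applied to the Banach space $\operatorname{E}:=\operatorname{LIP}^{\rm str}(\Gamma)$ (see Remark \ref{Rem.Lip_Banach}). The naive choice of functionals, $h\mapsto V_{h\circ\gamma}([a,b])$, cannot be fed into that proposition because it need not be finite-valued on $\operatorname{E}$; so instead I would work with variations over \emph{fixed finite partitions}. Since, by definition, $V_{h_{m}\circ\gamma}([a,b])=\sup_{\operatorname{T}\in\mathcal{T}([a,b])}V_{h_{m}\circ\gamma}(\operatorname{T})$ and $\sup_{m}V_{h_{m}\circ\gamma}([a,b])=+\infty$, for every $j\in\mathbb{N}$ one may select an index $n_{j}\in\mathbb{N}$ and a partition $\operatorname{T}_{j}\in\mathcal{T}([a,b])$ with $V_{h_{n_{j}}\circ\gamma}(\operatorname{T}_{j})>j$. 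I then set $p_{j}(h):=V_{h\circ\gamma}(\operatorname{T}_{j})$ for $h\in\operatorname{E}$.

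Next I would verify that the functionals $p_{j}$ satisfy the hypotheses of Proposition \ref{Prop.Banach_Steinhaus_substitution}. Finiteness and positive homogeneity are immediate, since $p_{j}(h)$ is a finite sum of terms $|h(\gamma(t_{i}))-h(\gamma(t_{i-1}))|$ and every $h\in\operatorname{LIP}^{\rm str}(\Gamma)$ is bounded on $\Gamma$. Countable subadditivity is the only point needing an argument: if $\sum_{i}z_{i}$ converges unconditionally in $\operatorname{E}$, then it converges in the $\sup$-norm, hence pointwise on $\Gamma$, so $\bigl(\sum_{i}z_{i}\bigr)(\gamma(t_{k}))=\sum_{i}z_{i}(\gamma(t_{k}))$ at each node $t_{k}$ of $\operatorname{T}_{j}$; applying the triangle inequality on each link of $\operatorname{T}_{j}$ and interchanging the (nonnegative) summations yields $p_{j}\bigl(\sum_{i}z_{i}\bigr)\le\sum_{i}p_{j}(z_{i})$. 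Because $\|h_{n_{j}}/L\|_{\operatorname{LIP}^{\rm str}(\Gamma)}\le 1$, we get $\sup_{\|z\|_{\operatorname{LIP}^{\rm str}(\Gamma)}\le 1}p_{j}(z)\ge p_{j}(h_{n_{j}}/L)=L^{-1}V_{h_{n_{j}}\circ\gamma}(\operatorname{T}_{j})>j/L$, so the quantity $\operatorname{M}$ of Proposition \ref{Prop.Banach_Steinhaus_substitution} is $+\infty$. That proposition then produces $\underline{h}_{0}\in\operatorname{LIP}^{\rm str}(\Gamma)$ with $\|\underline{h}_{0}\|_{\operatorname{LIP}^{\rm str}(\Gamma)}\le 1$ and $\sup_{j}V_{\underline{h}_{0}\circ\gamma}(\operatorname{T}_{j})=+\infty$; by Remark \ref{Rem.monotonicity_variation} this forces $V_{\underline{h}_{0}\circ\gamma}([a,b])=+\infty$, i.e.\ $\underline{h}_{0}\circ\gamma\notin BV([a,b])$.

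It remains to turn $\underline h_0$ into a function on all of $\operatorname{X}$: put $\underline{h}':=L\,\underline{h}_{0}$, which still lies in $\operatorname{LIP}^{\rm str}(\Gamma)$ with $\|\underline{h}'\|_{\operatorname{LIP}^{\rm str}(\Gamma)}\le L$, and extend it by the McShane--Whitney Lemma \ref{Lm.McShane_Whitney} to $\underline{h}\in\operatorname{LIP}_{L_{\underline{h}'}}(\operatorname{X})\subset\operatorname{LIP}(\operatorname{X})$ with $\underline{h}|_{\Gamma}=\underline{h}'$. As $\gamma$ takes values in $\Gamma$, we have $\underline{h}\circ\gamma=L(\underline{h}_{0}\circ\gamma)$, which is still not of bounded variation (a nonzero scalar multiple of a non-$BV$ function), while $\|\underline{h}\|_{\operatorname{LIP}^{\rm str}(\Gamma)}=\|\underline{h}'\|_{\operatorname{LIP}^{\rm str}(\Gamma)}\le L$; this completes the argument. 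The only genuinely delicate step is the verification of countable subadditivity of the $p_{j}$, together with the observation that one must pass from the full variation $V_{h\circ\gamma}([a,b])$ to variations over fixed partitions $\operatorname{T}_{j}$ in order to remain inside the finite-valued framework of Proposition \ref{Prop.Banach_Steinhaus_substitution}; the remaining steps are routine.
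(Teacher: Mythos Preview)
Your proof is correct and follows essentially the same route as the paper's: both reduce to Proposition \ref{Prop.Banach_Steinhaus_substitution} by replacing the full variation with variations over fixed finite partitions, then invoke Lemma \ref{Lm.McShane_Whitney} to extend. The only cosmetic differences are that the paper normalizes by $L$ at the outset and refines the partitions to be nested (so that the $p_m$ are increasing, cf.\ Remark \ref{Rem.monotonicity_variation}), whereas you scale at the end and skip the nesting; neither change affects the argument, and your explicit verification of countable subadditivity fills in a detail the paper leaves implicit.
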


\begin{proof}
Replacing $h_{m}$ by $h_{m}/L$ for every $m \in \mathbb{N}$, we may assume without loss of generality that $L=1$.
Passing to a subsequence if necessary, we may assume that $V_{h_{m} \circ \gamma}([a,b]) > m$ for each $m \in \mathbb{N}$.
Given $m \in \mathbb{N}$, we fix $\operatorname{T}_{m}:=\{t_{m,k}\}_{k=1}^{N_{m}} \subset \mathcal{T}([a,b])$ such that $V_{h_{m} \circ \gamma}(\operatorname{T}_{m}) > m$.
For each $m \in \mathbb{N}$, we put
$$
p_{m}(h):=V_{h \circ \gamma}(\operatorname{T}_{m}), \quad h \in \operatorname{LIP}^{\rm str}(\Gamma).
$$
Furthermore, replacing $\operatorname{T}_{m}$ by $\widetilde{\operatorname{T}}_{m}:=\cup_{k=1}^{m}\operatorname{T}_{k}$ if necessary, we may assume that $\operatorname{T}_{m+1} \succeq \operatorname{T}_{m}$
for all $m \in \mathbb{N}$. Combining Remarks \ref{Rem.monotonicity_variation}, \ref{Rem.Lip_Banach} with Lemma \ref{Lm.McShane_Whitney} and Proposition \ref{Prop.Banach_Steinhaus_substitution} we conclude.
\end{proof}

\begin{Remark}
\label{Rem.Banach_Steinhaus}
If we are not interested in a concrete construction of $h$ in Lemma \ref{Lm.Banah_Steinhauz} and merely care about its existence, we can essentially shrink the proof.
Indeed, it is sufficient to use Banach--Steinhaus-type arguments.
\end{Remark}

The main result of this section reads as follows.

\begin{Th}
\label{Th.1}
Let $\gamma \in C([a,b],\operatorname{X})$ be such that $h \circ \gamma \in BV([a,b])$ for every $h \in \operatorname{LIP}(\operatorname{X})$. Then $\mathcal{H}^{1}(\Gamma) < +\infty$.
\end{Th}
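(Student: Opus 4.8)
The plan is to argue by contradiction: assuming $\mathcal{H}^{1}(\Gamma) = +\infty$, I would build a single $1$-Lipschitz function $\underline h$ on $\operatorname{X}$ for which $\underline h \circ \gamma \notin BV([a,b])$. Since the hypotheses of Lemma \ref{Lm.Banah_Steinhauz} only require a \emph{sequence} $\{h_m\} \subset \operatorname{LIP}^{\mathrm{str}}(\Gamma)$ with uniformly bounded strict norms and unbounded variations of the $h_m \circ \gamma$, the whole task reduces to producing such a sequence. Concretely, it suffices to show: if $\mathcal{H}^{1}(\Gamma) = +\infty$, then
\[
\sup_{h \in \operatorname{LIP}_{1}(\operatorname{X})} V_{h \circ \gamma}([a,b]) = +\infty .
\]

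The key quantitative input is the integral representation for variation, Proposition \ref{Prop.integral_repres_for_variation}, applied to $g = h \circ \gamma$: it gives $V_{h\circ\gamma}([a,b]) = \int_{h(\Gamma)} \#(h\circ\gamma)^{-1}(t)\,d\mathcal{H}^{1}(t) \ge \mathcal{H}^{1}(h(\Gamma))$. So a lower bound on $\mathcal{H}^{1}(h(\Gamma))$ for a suitable $h$ would suffice. But a single $1$-Lipschitz $h$ can collapse $\mathcal{H}^{1}(\Gamma)$ badly (e.g. a space with large $\mathcal{H}^1$ but small $1$-dimensional ``spread''), so one cannot expect one function to recover all of $\mathcal{H}^{1}(\Gamma)$; instead I would use a countable family of $1$-Lipschitz functions that \emph{together} separate points well enough. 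The natural choice: fix a countable dense set $\{x_n\} \subset \Gamma$ (available since, by Lemma \ref{Lm.continuity}, $\gamma$ is continuous and $\Gamma$ is compact, hence separable) and put $h_n(x) := \operatorname{d}(x_n, x)$, truncated to a bounded function so that it lies in $\operatorname{LIP}^{\mathrm{str}}(\Gamma)$. For finitely many of these, the map $x \mapsto (h_1(x),\dots,h_k(x))$ into $\mathbb{R}^k$ with the $\ell^\infty$ metric is $1$-Lipschitz and becomes ``almost isometric'' on larger and larger portions of $\Gamma$ as $k \to \infty$; via the area/coarea-type estimate on $\mathbb{R}^k$ (each coordinate is $1$-Lipschitz into $\mathbb{R}$), one should get $\sup_n V_{h_n\circ\gamma}([a,b]) = +\infty$. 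If some finite collection already yields unbounded variation we are done; otherwise the $h_n$ have uniformly bounded variations $V_{h_n\circ\gamma}([a,b])$ but, since $\mathcal{H}^{1}(\Gamma)=+\infty$, a Vitali-type covering argument on $\Gamma$ using the denseness of $\{x_n\}$ produces, for each $M$, an $n=n(M)$ with $\mathcal{H}^1(h_n(\Gamma)) > M$ — a contradiction with uniform boundedness of $V_{h_n\circ\gamma}([a,b]) \ge \mathcal{H}^1(h_n(\Gamma))$.

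The main obstacle, as the authors themselves flag in step $(\mathbf{ST}2)$, is the geometric-measure-theoretic heart: showing that no countable family of $1$-Lipschitz functions can uniformly bound the variations of the post-compositions when $\mathcal{H}^{1}(\Gamma)=+\infty$. The delicacy is that $\mathcal{H}^1$ in an abstract metric space need not behave like length, and $1$-Lipschitz functions only give \emph{lower} bounds $\mathcal{H}^1(h(\Gamma)) \le \mathcal{H}^1(\Gamma)$ with possibly huge loss. The right mechanism is a Frostman/Eilenberg-inequality-type argument: for a point $x \in \Gamma$ with many ``well-spread'' nearby points, one of the distance functions $h_n = \operatorname{d}(x_n,\cdot)$ (with $x_n$ chosen from the dense set near $x$) has image of nearly full length near $x$; summing these local contributions over a disjointified cover of a subset of $\Gamma$ of large-but-finite $\mathcal{H}^1$-measure forces some $V_{h_n\circ\gamma}([a,b])$ to exceed any prescribed bound. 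Once the sequence $\{h_n\}$ with $\sup_n V_{h_n\circ\gamma}([a,b]) = +\infty$ is in hand, one invokes Lemma \ref{Lm.Banah_Steinhauz} to extract the single bad $\underline h \in \operatorname{LIP}(\operatorname{X})$, contradicting the hypothesis that $h\circ\gamma \in BV([a,b])$ for all $h \in \operatorname{LIP}(\operatorname{X})$, and concluding $\mathcal{H}^{1}(\Gamma) < +\infty$.
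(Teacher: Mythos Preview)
Your proposal has a genuine gap at the key geometric step. You plan to use distance functions $h_n = \operatorname{d}(x_n,\cdot)$ and to derive a contradiction from the lower bound $V_{h_n\circ\gamma}([a,b]) \ge \mathcal{H}^{1}(h_n(\Gamma))$ together with the claim that ``a Vitali-type covering argument \dots\ produces, for each $M$, an $n=n(M)$ with $\mathcal{H}^{1}(h_n(\Gamma))>M$''. But $\Gamma$ is compact, so $h_n(\Gamma)\subset[0,\operatorname{diam}\Gamma]$ and hence $\mathcal{H}^{1}(h_n(\Gamma))\le \operatorname{diam}\Gamma$ for \emph{every} $n$; the inequality $\mathcal{H}^{1}(h_n(\Gamma))>M$ is simply impossible for $M>\operatorname{diam}\Gamma$. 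The only way distance functions could yield large variation is through the multiplicity factor $\#(h_n\circ\gamma)^{-1}(t)$ in Proposition~\ref{Prop.integral_repres_for_variation}, not through the size of the image, and your sketch does not provide any mechanism for controlling that multiplicity. The ``Frostman/Eilenberg-type'' paragraph gestures at a localization, but the local contributions near different points $x$ would be credited to \emph{different} indices $n$, and you give no argument explaining why a \emph{single} $h_n$ must collect unbounded total variation.

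The paper's proof avoids distance functions altogether and is much more direct. From $\mathcal{H}^{1}(\Gamma)=+\infty$ one has $\mathcal{H}^{1}_{10\delta_m}(\Gamma)>20m$ for suitable $\delta_m\downarrow 0$; a fine covering of $\Gamma$ by small balls combined with the Vitali $5B$-lemma yields, for each $m$, a \emph{finite disjoint} family of balls $B_{2\epsilon_{m,k}}(x_{m,k})$, $k=1,\dots,N_m$, centered on $\Gamma$ with $\sum_k \epsilon_{m,k}>m$. Disjointness gives $\operatorname{d}(x_{m,i},x_{m,j})\ge \epsilon_{m,i}+\epsilon_{m,j}$, so the alternating assignment $\widetilde h_m(x_{m,k}):=(-1)^{k}\epsilon_{m,k}$ (with the $x_{m,k}$ ordered along $\gamma$) is $1$-Lipschitz on the finite set of centers and extends by McShane--Whitney to $h_m\in\operatorname{LIP}_1(\operatorname{X})$. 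Then directly $V_{h_m\circ\gamma}([a,b])\ge \sum_{k}(\epsilon_{m,k}+\epsilon_{m,k+1})\ge m$, and Lemma~\ref{Lm.Banah_Steinhauz} produces the single bad $\underline h$. The missing idea in your attempt is precisely this tailor-made alternating construction on a Vitali-disjoint family; it forces \emph{one} function to pick up all the local contributions, something a fixed countable list of distance functions need not do.
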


\begin{proof} Assume on the contrary  that $\mathcal{H}^{1}(\Gamma)=+\infty$.
We split the proof into several steps.

\textit{Step 1.}
Given $m \in \mathbb{N}$, there is $\delta_{m} \in (0,\frac{1}{m})$ and a family $\mathcal{U}_{m}:=\{U_{m,i}\}_{i=1}^{\infty}$ of sets with $\epsilon_{m,i}:=\operatorname{diam}U_{m,i}$
such that:

(1) $\mathcal{H}^{1}_{10\delta_{m}}(\Gamma) > 20m$;

(2) $\epsilon_{m,i} < \delta_{m}$ for all $i \in \mathbb{N}$;

(3) $U_{m,i} \cap \Gamma \neq \emptyset$ for all $i \in \mathbb{N}$;

(4) $\Gamma \subset \cup_{i=1}^{\infty}U_{m,i}$.

For each $i \in \mathbb{N}$, we take a point $z_{m,i} \in U_{m,i} \cap \Gamma$ and consider the family $\{B_{\epsilon_{m,i}}(z_{m,i})\}$.
Since $B_{2\epsilon_{m,i}}(z_{m,i}) \supset U_{m,i}$ for every $i \in \mathbb{N}$, the family is a covering of $\Gamma$. By the Vitali $5B$-covering lemma, there is a subfamily $\{\widetilde{B}_{k}\}:=\{B_{2\epsilon_{m,i_{k}}}(z_{m,i_{k}})\}_{k=1}^{\widetilde{N}_{m}}$, $\widetilde{N}_{m} \in \mathbb{N} \cup \{+\infty\}$, such that  the family $\{\widetilde{B}_{k}\}$ is disjoint and  $\Gamma \subset \cup_{k=1}^{\widetilde{N}_{m}}B_{10\epsilon_{m,i_{k}}}(z_{m,i_{k}})$.
Hence, by \eqref{eqq.1} we have
$$
\sum_{k=1}^{\widetilde{N}_{m}}\epsilon_{m,i_{k}} = \frac{1}{10}\sum_{k=1}^{\widetilde{N}_{m}}10\epsilon_{m,i_{k}} \geq \frac{1}{20}\mathcal{H}^{1}_{10\delta_{m}} > m.
$$
Let $N_{m} \in \mathbb{N}$ be the minimal $N \in \mathbb{N}$ for which $\sum_{k=1}^{N}\epsilon_{m,i_{k}} > m.$
In what follows, by $x_{m,k}$, $k \in \{1,...,N_{m}\}$, we denote the points $z_{m,i_{k}}$, $k \in \{1,...,N_{m}\}$, which are ordered in accordance with the orientation of $\gamma$.
More precisely, if $t_{m,k}:=\min\{t:t \in \gamma^{-1}(x_{m,k})\}$, $k \in \{1,...,N_{m}\}$, then $t_{m,k} < t_{m,k+1}$ for every $k \in \{1,...,N_{m}-1\}$.

\textit{Step 2.} Throughout this step, we fix $m \in \mathbb{N}$ and put
$$
\widetilde{h}(x_{m,k}):=(-1)^{k}\epsilon_{m,k}, \quad k \in \{1,...,N_{m}\}.
$$
By the construction, $B_{2\epsilon_{m,i}}(x_{m,i}) \cap B_{2\epsilon_{m,j}}(x_{m,j})$ for any different $i,j \in \{1,...,N_{m}\}$. This gives the crucial inequality
$$
|\widetilde{h}_{m}(x_{m,i})-\widetilde{h}_{m}(x_{m,j})| \le \epsilon_{m,i}+\epsilon_{m,j} \le \operatorname{d}(x_{m,i},x_{m,j}).
$$
As a result, we have $\widetilde{h}_{m} \in \operatorname{LIP}_{1}(E_{m})$ with $E_{m}:=\{x_{m,k}\}_{k=1}^{N_{m}}$. Using Lemma \ref{Lm.McShane_Whitney} we find a function
$h_{m} \in \operatorname{LIP}_{1}(\operatorname{X})$ such that $h_{m}|_{E_{m}}=\widetilde{h}_{m}$.
By the construction, we clearly have
\begin{equation}
\label{eqq.17''}
\operatorname{V}_{h_{m} \circ \gamma}([a,b]) \geq \sum\limits_{k=1}^{N_{m}-1}|h_{m}(x_{m,k})-h_{m}(x_{m,k+1})| \geq \sum\limits_{k=1}^{N_{m}-1}r_{m,k}+r_{m,k+1} \geq m.
\end{equation}

\textit{Step 3.} Since $\gamma \in C([a,b],\operatorname{X})$, the set $\Gamma$ is compact. In particular, $\Gamma$ is a bounded set.
According to our construction $\{h_{m}(x_{m,1})\} \subset \mathbb{R}$ is a bounded sequence.
Hence, taking into account Lemma \ref{Lm.1} we combine Lemma \ref{Lm.Banah_Steinhauz} with \eqref{eqq.17''}. This gives existence of a function $\underline{h} \in \operatorname{LIP}(\operatorname{X})$
such that $\underline{h} \circ \gamma \notin BV([a,b],\operatorname{X})$. But this contradicts the assumptions of the theorem.

The proof is complete.
\end{proof}

\section{Good Lipschitz functions on $1$-rectifiable sets}
In fact, the main result of this section can be seen as a weaker version of Theorem 7.6 from \cite{Bate}. However, we present
an alternative elementary proof. In contrast to \cite{Bate} this leads to explicit constructions of Lipschitz functions
satisfying some nice infinitesimal properties.

Throughout the section, we fix an arbitrary (nonempty) metric space $\operatorname{X}=(\operatorname{X},\operatorname{d})$ and
a curve $\gamma \in C([a,b],\operatorname{X})$ such that $\mathcal{H}^{1}(\Gamma) \in (0,+\infty)$ (where $\Gamma = \gamma([a,b])$, as usual).
Keeping in mind Proposition \ref{Prop.rectifiability}, we fix a decomposition $\operatorname{\Gamma} = E \bigcup \cup_{i \in I}\Gamma^{i}$ such that $\mathcal{H}^{1}(E)=0$
and $\{\Gamma^{i}\}_{i \in I}$ is an at most countable sequence of images of curves with the corresponding properties. In this section,
we will assume that $I$ is countable and identify it with $\mathbb{N}$. The case of finite $I$ is ideologically similar but simpler in technical details.

We fix an arbitrary sequence $\{\rho_{m}\} \subset (0,+\infty)$ such that $\rho_{m} \downarrow 0$, $m \to \infty$, and
\begin{equation}
\notag
\rho_{m} < \frac{1}{2}\min\{\operatorname{diam}\Gamma^{i}:1 \le i \le m \hbox{ and } \Gamma^{i} \neq \emptyset\}, \quad m \in \mathbb{N}.
\end{equation}
We also set
\begin{equation}
\label{eqq.set_gamma_m}
\Gamma_{m}:=\Bigl(\bigcup_{i=1}^{m} \Gamma^{i}\Bigr) \setminus \Bigl(\bigcup_{i=1}^{m-1} B_{\rho_{m}}(x_{i})\Bigr), \quad m \in \mathbb{N}.
\end{equation}
Finally, we fix an arbitrary sequence $\{\epsilon_{m}\} \subset (0,+\infty)$ such that $\epsilon_{m} \downarrow 0$, $m \to \infty$, and
\begin{equation}
\label{eqq.41'}
\epsilon_{m} < \min\limits_{\substack{1 \le i,j \le m \\ i \neq j}}\inf\{\operatorname{d}(x',x''):x' \in \Gamma_{m} \cap \Gamma^{i}, x'' \in \Gamma_{m} \cap \Gamma^{j}\}=:r_{m}, \quad m \in \mathbb{N}.
\end{equation}

\subsection{Keystone family of sawtooth functions}

Given $i \in \mathbb{N}$, let $\gamma^{i}_{s}$ be the arc length parametrization of the curve $\gamma^{i}$.
Given $m \in \mathbb{N}$, if $\epsilon_{m} \geq \operatorname{diam}\Gamma^{i}$, we put
\begin{equation}
\label{eqq.3.1'}
g^{i}_{m}(x):=(\gamma^{i}_{s})^{-1}(x), \quad x \in \Gamma^{i}.
\end{equation}
If $\epsilon_{m} < \operatorname{diam}\Gamma^{i}$, we put
\begin{equation}
\label{eqq.3.2'}
g^{i}_{m}(x):=(\widetilde{g}^{i}_{m} \circ (\gamma^{i}_{s})^{-1})(x), \quad x \in \Gamma^{i}.
\end{equation}
In this formula, for each $l \in \{0,...,[l(\gamma^{i})/\epsilon_{m}]\}$
\begin{equation}
\label{eqq.3.3'}
\widetilde{g}^{i}_{m}(t):=
\begin{cases}
t-l\epsilon_{m} \quad \text{if $l$ is an odd number and $t \in [l \epsilon_{m},\min\{(l+1)\epsilon_{m},l(\gamma^{i})\}]$};\\
l\epsilon_{m}-t \quad \text{if $l$ is an even number and $t \in [l\epsilon_{m},\min\{(l+1)\epsilon_{m},l(\gamma^{i})\}]$}.
\end{cases}
\end{equation}
In the forthcoming subsection we show that the family $\{g_{i,m}\}$ satisfies some nice properties.

\subsection{Infinitesimal properties of sawtooth functions}
The first crucial observation is an immediate consequence of Lemma \ref{Lm.inner_and_outer_metric}.
\begin{Lm}
\label{Lm.derivative_keystone_sequence}
For each $i \in \mathbb{N}$ and every $m \in \mathbb{N}$,
\begin{equation}
\label{eqq.4.18'}
\lim\limits_{\substack{y \to x \\ y \in \Gamma^{i}}}\frac{|g_{m}^{i}(x)-g_{m}^{i}(y)|}{\operatorname{d}(x,y)} = 1  \quad \hbox{for each} \quad x \in \underline{\Gamma}^{i} \setminus
\Bigl\{\gamma^{i}_{s}(l\epsilon_{m}):l=0,...,\Bigl[\frac{l(\gamma^{i})}{\epsilon_{m}}\Bigr]\Bigr\}.
\end{equation}
\end{Lm}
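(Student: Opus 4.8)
The plan is to reduce \eqref{eqq.4.18'} directly to Lemma \ref{Lm.inner_and_outer_metric}, where $\underline{\Gamma}^{i}$ is understood to be the set provided by that lemma applied to the curve $\gamma^{i}$. By Proposition \ref{Prop.rectifiability} each $\gamma^{i}$ is injective and Lipschitz, hence a simple rectifiable curve, so Lemma \ref{Lm.inner_and_outer_metric} gives $\mathcal{H}^{1}(\Gamma^{i}\setminus\underline{\Gamma}^{i})=0$ together with
\[
\lim_{\substack{x_{1},x_{2} \to x \\ x_{1},x_{2} \in \Gamma^{i}}}\frac{l(\gamma^{i}_{x_{1},x_{2}})}{\operatorname{d}(x_{1},x_{2})}=1 \quad \text{for every } x \in \underline{\Gamma}^{i}.
\]
Taking $x_{1}=x$, this yields in particular $\lim_{y\to x,\, y\in\Gamma^{i}} l(\gamma^{i}_{x,y})/\operatorname{d}(x,y)=1$ for every $x\in\underline{\Gamma}^{i}$. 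Since $\gamma^{i}_{s}$ is the arc length parametrization of $\gamma^{i}$, one has $l(\gamma^{i}_{x,y})=|(\gamma^{i}_{s})^{-1}(x)-(\gamma^{i}_{s})^{-1}(y)|$, so it remains to show that, for $x$ as in the statement and for $y\in\Gamma^{i}$ sufficiently close to $x$, the increment of $g^{i}_{m}$ agrees in absolute value with the increment of the arc length parameter, i.e.
\[
|g^{i}_{m}(x)-g^{i}_{m}(y)|=|(\gamma^{i}_{s})^{-1}(x)-(\gamma^{i}_{s})^{-1}(y)|.
\]

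To prove this, fix $i,m$ and $x\in\underline{\Gamma}^{i}$ with $x\neq\gamma^{i}_{s}(l\epsilon_{m})$ for all $l\in\{0,\dots,[l(\gamma^{i})/\epsilon_{m}]\}$, and set $t:=(\gamma^{i}_{s})^{-1}(x)$. If $\epsilon_{m}\geq\operatorname{diam}\Gamma^{i}$, then $g^{i}_{m}=(\gamma^{i}_{s})^{-1}$ by \eqref{eqq.3.1'} and the displayed identity holds trivially. If $\epsilon_{m}<\operatorname{diam}\Gamma^{i}$, then $g^{i}_{m}=\widetilde{g}^{i}_{m}\circ(\gamma^{i}_{s})^{-1}$ by \eqref{eqq.3.2'}, and by \eqref{eqq.3.3'} the function $\widetilde{g}^{i}_{m}$ is affine with slope $\pm 1$ on each interval $[l\epsilon_{m},\min\{(l+1)\epsilon_{m},l(\gamma^{i})\}]$. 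Since $t\in[0,l(\gamma^{i})]$ is not one of the breakpoints $l\epsilon_{m}$, there is $\delta>0$ such that $\widetilde{g}^{i}_{m}$ is affine with slope $\pm 1$ on $[0,l(\gamma^{i})]\cap(t-\delta,t+\delta)$, whence $|\widetilde{g}^{i}_{m}(t)-\widetilde{g}^{i}_{m}(s)|=|t-s|$ for all such $s$. By the continuity of the inverse of the arc length parametrization of $\gamma^{i}$ (see formula \eqref{eqq.4.20} in the proof of Lemma \ref{Lm.inner_and_outer_metric}), there is a neighborhood $V$ of $x$ in $\Gamma^{i}$ with $(\gamma^{i}_{s})^{-1}(V)\subset(t-\delta,t+\delta)$. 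Then, for $y\in V$ with $s:=(\gamma^{i}_{s})^{-1}(y)$, we obtain $|g^{i}_{m}(x)-g^{i}_{m}(y)|=|\widetilde{g}^{i}_{m}(t)-\widetilde{g}^{i}_{m}(s)|=|t-s|$, which is exactly the displayed identity. Dividing by $\operatorname{d}(x,y)$ and letting $y\to x$ in $\Gamma^{i}$ gives \eqref{eqq.4.18'}.

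I do not expect a genuine obstacle here, which matches the fact that the lemma is described as an immediate consequence of Lemma \ref{Lm.inner_and_outer_metric}: the entire content is that, locally away from the corners $\gamma^{i}_{s}(l\epsilon_{m})$, increments of $g^{i}_{m}$ coincide with arc length increments, and arc length increments are asymptotically equal to $\operatorname{d}$ at points of $\underline{\Gamma}^{i}$. The only points deserving a line of care are checking that $\gamma^{i}$ falls under the hypotheses of Lemma \ref{Lm.inner_and_outer_metric} (immediate from Proposition \ref{Prop.rectifiability}) and the harmless endpoint bookkeeping at $t=0$ and $t=l(\gamma^{i})$, both handled by excluding the finitely many points $\gamma^{i}_{s}(l\epsilon_{m})$.
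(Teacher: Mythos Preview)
Your proof is correct and is precisely the intended unpacking of the paper's one-line justification: the paper states that the lemma is ``an immediate consequence of Lemma \ref{Lm.inner_and_outer_metric}'' and gives no further argument, and your write-up supplies exactly the two ingredients this entails, namely that arc length increments and metric increments are asymptotically equal on $\underline{\Gamma}^{i}$, and that away from the breakpoints $\gamma^{i}_{s}(l\epsilon_{m})$ the sawtooth function $\widetilde{g}^{i}_{m}$ has slope $\pm 1$ so that $|g^{i}_{m}(x)-g^{i}_{m}(y)|$ coincides with the arc length increment for $y$ near $x$.
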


The following lemma will be crucial for us.
\begin{Lm}
\label{Lm.keystone_function}
For each $i \in \mathbb{N}$, for every $\delta > 0$
there exists a sequence $\{\Gamma^{i}_{m}(\delta)\}_{m=1}^{\infty}$ of compact subsets of $\Gamma^{i}$ such that
the following properties hold:

(1) $\Gamma^{i}_{m+1}(\delta) \subset \Gamma^{i}_{m}(\delta)$ for every $m \in \mathbb{N}$;

(2) $\mathcal{H}^{1}(\Gamma^{i} \setminus \Gamma^{i}_{m}(\delta)) \to 0$, $m \to \infty$;

(3)  $g^{i}_{m} \in \operatorname{LIP}_{1+\delta}(\Gamma^{i}_{m}(\delta))$ for all $m \in \mathbb{N}$.

\end{Lm}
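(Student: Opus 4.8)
The plan is to fix $i \in \mathbb{N}$ and $\delta > 0$ and build the sets $\Gamma^{i}_{m}(\delta)$ directly from the infinitesimal identity in Lemma \ref{Lm.derivative_keystone_sequence}. Write $g^{i}_{m}$ via the sawtooth $\widetilde{g}^{i}_{m}$ composed with the inverse arc-length parametrization $(\gamma^{i}_{s})^{-1}$, and let $Z^{i}_{m} := \{\gamma^{i}_{s}(l\epsilon_{m}) : l = 0,\dots,[l(\gamma^{i})/\epsilon_{m}]\}$ be the (finite) set of ``teeth vertices'' on $\Gamma^{i}$, which is $\mathcal{H}^{1}$-null. By Lemma \ref{Lm.inner_and_outer_metric} applied to $\gamma^{i}$ there is a full-measure set $\underline{\Gamma}^{i} \subset \Gamma^{i}$ on which \eqref{eqq.4.18} holds, and by Lemma \ref{Lm.derivative_keystone_sequence} we have, for every $x \in \underline{\Gamma}^{i} \setminus Z^{i}_{m}$, the pointwise derivative relation $\lim_{y \to x,\, y \in \Gamma^{i}} |g^{i}_{m}(x) - g^{i}_{m}(y)|/\operatorname{d}(x,y) = 1$. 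The idea is then to pass from this pointwise limit, which only controls $g^{i}_{m}$ infinitesimally near each good point, to a genuine Lipschitz bound $1+\delta$ on a large compact subset, by the standard ``level set'' device: sets where the difference quotient is uniformly controlled at a fixed scale.

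Concretely, for each $m$ and each $k \in \mathbb{N}$ let
$$
F^{i}_{m,k} := \Bigl\{ x \in \underline{\Gamma}^{i} \setminus Z^{i}_{m} : |g^{i}_{m}(x) - g^{i}_{m}(y)| \le (1+\delta)\operatorname{d}(x,y) \ \text{ for all } y \in \Gamma^{i} \text{ with } \operatorname{d}(x,y) < 1/k \Bigr\}.
$$
These sets increase in $k$, and by the pointwise limit above their union over $k$ is all of $\underline{\Gamma}^{i} \setminus Z^{i}_{m}$, hence has full $\mathcal{H}^{1}$-measure in $\Gamma^{i}$. So for each $m$ we may pick $k = k(m)$ with $\mathcal{H}^{1}(\Gamma^{i} \setminus F^{i}_{m,k(m)}) < 2^{-m}$; then $g^{i}_{m}$ restricted to $F^{i}_{m,k(m)}$ is ``$(1+\delta)$-Lipschitz at scale $1/k(m)$''. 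To upgrade this to a global Lipschitz bound on a subset we intersect with a small-diameter piece: cover $\Gamma^{i}$ by finitely many relatively open pieces of diameter $< 1/k(m)$ (possible by compactness) and throw away all but the largest-measure piece, or — cleaner — observe that on $F^{i}_{m,k(m)} \cap \gamma^{i}_{s}([c,d])$ with $|(\gamma^{i}_{s})^{-1}|$-image an interval short enough that any two of its points lie within $1/k(m)$ (using $\gamma^{i}_{s}$ is $1$-Lipschitz), the bound $|g^{i}_{m}(x)-g^{i}_{m}(x')| \le (1+\delta)\operatorname{d}(x,x')$ holds for all pairs. Since these short-interval pieces cover $\Gamma^{i}$ up to the $\mathcal{H}^{1}$-null set $Z^{i}_{m} \cup (\Gamma^{i}\setminus\underline{\Gamma}^{i})$, we can after discarding a set of measure $< 2^{-m}$ arrange that on the remaining set $\widehat{\Gamma}^{i}_{m}$ the function $g^{i}_{m}$ is genuinely $(1+\delta)$-Lipschitz. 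Finally set $\Gamma^{i}_{m}(\delta) := \operatorname{cl}\bigl(\bigcap_{j \ge m} \widehat{\Gamma}^{i}_{j}\bigr)$ (closure is harmless since $g^{i}_{m}$ is continuous, so the Lipschitz bound persists to the closure, and these sets are subsets of the compact $\Gamma^{i}$, hence compact); this gives (1) by construction and (2) since $\mathcal{H}^{1}(\Gamma^{i} \setminus \bigcap_{j\ge m}\widehat{\Gamma}^{i}_{j}) \le \sum_{j \ge m} 2^{-j} = 2^{-m+1} \to 0$.

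The main obstacle I expect is the passage from the \emph{pointwise} infinitesimal bound of Lemma \ref{Lm.derivative_keystone_sequence} to a \emph{uniform} scale $1/k(m)$ on a large set, and then the bookkeeping needed to turn the ``Lipschitz at a fixed small scale'' bound into a genuine Lipschitz bound on a subset — this is where one must use that $\Gamma^{i}$ is the image of a compact interval under the $1$-Lipschitz map $\gamma^{i}_{s}$, so that short sub-intervals of the parameter yield small-diameter sub-arcs on which all pairwise distances are below the chosen scale. One should also double-check that taking closures in the definition of $\Gamma^{i}_{m}(\delta)$ does not destroy property (3): since each $g^{i}_{m}$ is continuous on $\Gamma^{i}$, an inequality $|g^{i}_{m}(x)-g^{i}_{m}(x')| \le (1+\delta)\operatorname{d}(x,x')$ valid on a dense subset of a set extends to its closure, so (3) is preserved. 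Everything else is routine measure-theoretic bookkeeping.
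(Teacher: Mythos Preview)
Your plan has a genuine gap at exactly the step you flagged as ``the main obstacle'': the passage from \emph{Lipschitz at scale $1/k(m)$} to \emph{genuinely $(1+\delta)$-Lipschitz on a large subset} does not go through the way you describe. You observe correctly that on each short sub-arc $F^{i}_{m,k(m)} \cap \gamma^{i}_{s}([c,d])$ of diameter $<1/k(m)$ the bound holds for all pairs, and that such sub-arcs cover $\Gamma^{i}$ up to a null set. But the Lipschitz property is \emph{not} preserved under unions: if $x$ lies in one short piece and $x'$ in another, $\operatorname{d}(x,x')$ may well exceed $1/k(m)$, and nothing you have said controls $|g^{i}_{m}(x)-g^{i}_{m}(x')|/\operatorname{d}(x,x')$. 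So the alternative you suggest --- keeping only one piece --- loses almost all the measure (its diameter, hence its $\mathcal{H}^{1}$-measure, is at most $1/k(m)$), while keeping the union fails to give (3). There is no way to get both large measure and global $(1+\delta)$-Lipschitz control out of a purely local estimate on a general compact curve, because the ratio $l(\gamma^{i}_{x,y})/\operatorname{d}(x,y)$ can be arbitrarily large for distant pairs.

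The missing idea is that the sawtooth has \emph{small amplitude}: from \eqref{eqq.3.2'}--\eqref{eqq.3.3'} one has $|g^{i}_{m}(x)-g^{i}_{m}(y)| \le 2\epsilon_{m}$ for \emph{all} $x,y \in \Gamma^{i}$, regardless of their distance. The paper exploits this to control large scales directly. For $\mathcal{H}^{1}$-a.e.\ $\underline{x}\in\Gamma^{i}$ there is a radius $\operatorname{m}^{i}_{\underline{x}}>0$ below which $l(\gamma^{i}_{\underline{x},x})/\operatorname{d}(\underline{x},x)\le 1+\delta$ (this is \eqref{eqq.4.18}); for $\operatorname{d}(\underline{x},x)\ge \operatorname{m}^{i}_{\underline{x}}$ one simply bounds the difference quotient by $2\epsilon_{m}/\operatorname{m}^{i}_{\underline{x}}\cdot \operatorname{M}^{i}_{\underline{x}}\to 0$. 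Thus $\varlimsup_{m}\operatorname{Lip}g^{i}_{m}(\underline{x})\le 1+\delta$ at almost every $\underline{x}$, where $\operatorname{Lip}$ is the \emph{global} pointwise Lipschitz constant \eqref{eqq.pointwise_Lipschitz}. One then defines $\Gamma^{i}_{m}(\delta):=\{x:\operatorname{Lip}g^{i}_{k}(x)\le 1+\delta\text{ for all }k\ge m\}$; since $\operatorname{Lip}g^{i}_{k}(x)\le 1+\delta$ bounds the ratio against \emph{every} $y\in\Gamma^{i}$, property (3) is immediate with no further cutting. (Incidentally, both this construction and yours produce an \emph{increasing} sequence $\Gamma^{i}_{m}(\delta)\subset\Gamma^{i}_{m+1}(\delta)$, which is what is actually used downstream in Theorem~\ref{Th.key_function}; the inclusion in the lemma statement appears to be misprinted.)
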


\begin{proof}
We fix $i \in \mathbb{N}$ and an arbitrary point $\underline{x} \in \underline{\Gamma}^{i}$.
Using \eqref{eqq.21}, we take $\sigma^{i}_{\underline{x}} > 0$ such that
\begin{equation}
\notag
l(\gamma^{i}_{\underline{x},x}) \le (1+\delta)\operatorname{d}(\underline{x},x) \quad \hbox{for all $x \in B_{\sigma^{i}_{\underline{x}}}(\underline{x}) \cap \Gamma^{i}$}.
\end{equation}
From \eqref{eqq.4.20} it is easy to see that the function
$\Gamma^{i}\setminus\{\underline{x}\} \ni x \to \psi^{i}_{\underline{x}}(x):=l(\gamma^{i}_{\underline{x},x})/\operatorname{d}(\underline{x},x)$ is continuous
on the set $\Gamma^{i}\setminus\{\underline{x}\}$. Since $\Gamma^{i} \setminus B_{\sigma^{i}_{\underline{x}}}(\underline{x})$ is a compact set, the function
$\psi^{i}_{\underline{x}}$ attains its maximum $\operatorname{M}^{i}_{\underline{x}} > 0$ on this set.

We put $\underline{\Gamma}^{i}_{\rm adm}:=\underline{\Gamma}^{i} \setminus \bigcup_{m \in \mathbb{N}}
\{\gamma^{i}_{s}(l\epsilon):l=0,...,[l(\gamma^{i})/\epsilon_{m})]\}.$
We claim that
\begin{equation}
\label{eqq.4.317}
\varlimsup\limits_{m \to \infty}\operatorname{Lip}g^{i}_{m}(\underline{x}) \le (1+\delta) \quad \hbox{for each} \quad \underline{x} \in \underline{\Gamma}^{i}_{\rm adm}.
\end{equation}
Indeed, we fix $\underline{x} \in \underline{\Gamma}^{i}_{\rm adm}$ and note that since $|g^{i}_{m}(\underline{x})-g^{i}_{m}(x)| \le l(\gamma^{i}_{\underline{x},x})$ for all $x \in \Gamma^{i}$ we have
\begin{equation}
\label{eqq.4.318}
\operatorname{Lip}g^{i}_{m}(\underline{x}) \le \max\{1+\delta,\operatorname{M}^{i}_{\underline{x}}\} \quad \hbox{for all $m \in \mathbb{N}$}.
\end{equation}
Hence, there are two cases to be considered. In the first case, $\operatorname{M}^{i}_{\underline{x}} \le (1+\delta)$ and \eqref{eqq.4.317} immediately follows from
\eqref{eqq.4.318}. In the second case, $\operatorname{M}^{i}_{\underline{x}} > (1+\delta)$ and by \eqref{eqq.4.18'} we get
\begin{equation}
\label{eqq.4.319}
\operatorname{m}^{i}_{\underline{x}}:=\inf\Bigl\{\operatorname{d}(\underline{x},x): x \in \Gamma^{i} \hbox{ and } \frac{l(\gamma^{i}_{\underline{x},x})}{\operatorname{d}(\underline{x},x) } > 1+\delta\Bigr\} > 0.
\end{equation}
As a result, since $l(\gamma^{i}_{\underline{x},x}) \geq \operatorname{d}(\underline{x},x)$ and $|g^{i}_{m}(x)-g^{i}_{m}(\underline{x})| \le 2\epsilon_{m}$, for each $m \in \mathbb{N}$ and
for every $x \in \Gamma^{i}$, we have the following estimate
\begin{equation}
\label{eqq.4.320}
\sup\limits_{\operatorname{d}(\underline{x},x) \geq \operatorname{m}^{i}_{\underline{x}}} \frac{|g^{i}_{m}(\underline{x})-g^{i}_{m}(x)|}{\operatorname{d}(\underline{x},x)} =
\sup\limits_{\operatorname{d}(\underline{x},x) \geq \operatorname{m}^{i}_{\underline{x}}} \frac{|g^{i}_{m}(\underline{x})-g^{i}_{m}(x)|}{l(\gamma_{\underline{x},x})} \frac{l(\gamma^{i}_{\underline{x},x})}{\operatorname{d}(\underline{x},x)} \le \frac{2\epsilon_{m}}{\operatorname{m}^{i}_{\underline{x}}}\operatorname{M}^{i}_{\underline{x}}.
\end{equation}
Passing to the limit in the right hand side of \eqref{eqq.4.320} as $m \to \infty$, keeping in mind that $\operatorname{M}^{i}_{\underline{x}} < +\infty$ and taking into account
\eqref{eqq.4.319}, we deduce \eqref{eqq.4.317}.

\textit{Step 5.} Given $m \in \mathbb{N}$, we put
$$
\Gamma^{i}_{m}(\delta):=\{x \in \Gamma^{i}:\operatorname{Lip}g^{i}_{k}(x) \le (1+\delta) \hbox{ for all } k \geq m\}.
$$
It is easy to see that $\Gamma^{i}_{m}(\delta)$ is closed for every $m \in \mathbb{N}$. Since $\Gamma^{i}$ is the image of a Lipschitz map $\gamma^{i}$, the sets
$\Gamma^{i}$ and thus $\Gamma^{i}_{m}(\delta)$ are in fact compact.
Furthermore, the sequence $\{\Gamma^{i}_{m}(\delta)\}_{m=1}^{\infty}$ satisfies properties (1) and (3). Finally, using Egorov's theorem and \eqref{eqq.4.317}
we deduce that $\mathcal{H}^{1}(\Gamma^{i} \setminus \Gamma^{i}_{m}(\delta)) \to 0$, $m \to \infty$.

The lemma is proved.
\end{proof}

Now we are ready to establish the \textit{main result of this section.}

\begin{Th}
\label{Th.key_function}
For each $\delta > 0$ there exist a sequence of compact subsets $\{\Gamma_{m}(\delta)\}$ of $\Gamma$, a sequence of functions
$\{h_{m}\} \subset \operatorname{LIP}_{1+\delta}(\operatorname{X})$,
and a sequence $\{\operatorname{N}_{m}\} \subset \mathbb{N}$ such that:

(1) $\Gamma_{m}(\delta) \subset \bigcup\limits_{i=1}^{m}\Gamma^{i}$ and $\Gamma_{m}(\delta) \subset \Gamma_{m+1}(\delta)$  for every $m \in \mathbb{N}$;

(2) $\mathcal{H}^{1}(\Gamma \setminus \Gamma_{m}(\delta)) \to 0$, $m \to \infty$;

(3) $\#\Bigl(h^{-1}_{m}(y) \cap \Gamma_{m}(\delta)\Bigr) \le \operatorname{N}_{m}$ for every $y \in \mathbb{R}$;

(4) for every $x \in \Gamma_{m}(\delta)$
\begin{equation}
\label{eqq.3.9'}
\lim\limits_{\substack{y \to x \\ y \in \Gamma}}\frac{|h_{m}(y)-h_{m}(x)|}{\operatorname{d}(x,y)}  = 1.
\end{equation}
\end{Th}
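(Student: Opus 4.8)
The plan is to build $h_{m}$ by gluing together, over the first $m$ pieces of the rectifiability decomposition, the sawtooth functions $g^{i}_{m}$ introduced above, each cut down to the ``good'' compact set $\Gamma^{i}_{m}(\delta)$ furnished by Lemma~\ref{Lm.keystone_function}, and then extending the resulting function from a compact subset of $\Gamma$ to all of $\operatorname{X}$ by the McShane--Whitney Lemma~\ref{Lm.McShane_Whitney}. The role of the separation \eqref{eqq.41'} is to keep the sawtooths on distinct pieces from interfering, so that the glued map stays $(1+\delta)$-Lipschitz; the number of teeth of a sawtooth yields the uniform bound $\operatorname{N}_{m}$ in (3); and the infinitesimal slope-one property \eqref{eqq.3.9'} is extracted from Lemma~\ref{Lm.derivative_keystone_sequence} by the porosity argument of Lemma~\ref{Lm.local_lip_const}.

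In more detail, fix $\delta>0$. For each $i$, Lemma~\ref{Lm.keystone_function} gives compact sets $\Gamma^{i}_{m}(\delta)\subseteq\Gamma^{i}$, nested so that $g^{i}_{m}\in\operatorname{LIP}_{1+\delta}(\Gamma^{i}_{m'}(\delta))$ for every $m'\le m$, with $\mathcal{H}^{1}(\Gamma^{i}\setminus\Gamma^{i}_{m}(\delta))\to0$. Put
\[
\Gamma_{m}(\delta):=\bigcup_{k\le m}\ \bigcup_{\substack{i\le k,\ \operatorname{diam}\Gamma^{i}>\epsilon_{k}}}\bigl(\Gamma^{i}_{k}(\delta)\cap\Gamma_{k}\bigr),
\]
a compact set, increasing in $m$ and contained in $\bigcup_{i\le m}\Gamma^{i}$, which gives (1); since $\operatorname{diam}\Gamma^{i}>0$ forces $\operatorname{diam}\Gamma^{i}>\epsilon_{k}$ for $k$ large, and $\mathcal{H}^{1}(\Gamma^{i}\cap\Gamma_{k})\to\mathcal{H}^{1}(\Gamma^{i})$, $\mathcal{H}^{1}(\Gamma^{i}\setminus\Gamma^{i}_{k}(\delta))\to0$, $\sum_{i}\mathcal{H}^{1}(\Gamma^{i})<\infty$, one gets (2). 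On $\Gamma^{i}_{k}(\delta)\cap\Gamma_{k}$ set $h_{m}:=g^{i}_{m}$ (after deleting a negligible neighbourhood of the finitely many junction points $x_{1},\dots,x_{m}$ these sets are pairwise disjoint, so this is unambiguous, and $g^{i}_{m}$ is $(1+\delta)$-Lipschitz there by the nesting), and extend $h_{m}$ to $\operatorname{X}$ by Lemma~\ref{Lm.McShane_Whitney}. Only sawtooth-type $g^{i}_{m}$ enter, so $|h_{m}|\le\epsilon_{m}$ on $\Gamma_{m}(\delta)$, and on $\Gamma^{i}$ each level of $g^{i}_{m}$ is attained at most $\lceil l(\gamma^{i})/\epsilon_{m}\rceil+1$ times; hence (3) holds with $\operatorname{N}_{m}:=\sum_{i\le m}(\lceil l(\gamma^{i})/\epsilon_{m}\rceil+1)$.

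For the Lipschitz bound on $\Gamma_{m}(\delta)$: if $x,y$ lie on a common $\Gamma^{i}$ then $|h_{m}(x)-h_{m}(y)|=|g^{i}_{m}(x)-g^{i}_{m}(y)|\le(1+\delta)\operatorname{d}(x,y)$ by Lemma~\ref{Lm.keystone_function}; if they lie on distinct pieces of $\Gamma_{k}$ then $\operatorname{d}(x,y)\ge r_{m}$ by \eqref{eqq.41'}, while $|h_{m}(x)-h_{m}(y)|\le2\epsilon_{m}$, and $2\epsilon_{m}\le(1+\delta)r_{m}$ once $2\epsilon_{m}<r_{m}$, a harmless strengthening of the normalization \eqref{eqq.41'} that affects neither Lemma~\ref{Lm.derivative_keystone_sequence} nor Lemma~\ref{Lm.keystone_function}. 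Thus $h_{m}\in\operatorname{LIP}_{1+\delta}(\operatorname{X})$. To obtain \eqref{eqq.3.9'}, fix $m$ and $x\in A:=\Gamma^{i}_{k}(\delta)\cap\Gamma_{k}\subseteq\Gamma^{i}$ with $k\le m$, so that $h_{m}=g^{i}_{m}$ on $A$. By Lemma~\ref{Lm.derivative_keystone_sequence}, $\lim_{y\to x,\,y\in\Gamma^{i}}|g^{i}_{m}(y)-g^{i}_{m}(x)|/\operatorname{d}(x,y)=1$ for $\mathcal{H}^{1}$-a.e.\ $x$, so in particular the limit over $y\in A$ exists and equals $1$ for a.e.\ $x\in A$. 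Apply the argument of Lemma~\ref{Lm.local_lip_const} with $E=A$, $h=h_{m}$, and the (not necessarily simple) curve $\gamma$ — its proof uses only that $\Gamma$ is path-connected and $1$-rectifiable — to conclude that $\lim_{y\to x,\,y\in\Gamma}|h_{m}(y)-h_{m}(x)|/\operatorname{d}(x,y)=\lim_{y\to x,\,y\in A}|h_{m}(y)-h_{m}(x)|/\operatorname{d}(x,y)=1$ for a.e.\ $x\in A$. Discarding from $\Gamma_{m}(\delta)$ the union over $m'\ge m$ of the resulting $\mathcal{H}^{1}$-null exceptional sets, passing to a compact subset by inner regularity, and then to cumulative unions in $m$, makes \eqref{eqq.3.9'} hold at every point of $\Gamma_{m}(\delta)$ while preserving (1)--(3).

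The two points that carry the real content — and where I expect the difficulty — are the cross-piece Lipschitz estimate, which works only because \eqref{eqq.41'} keeps the sawtooth oscillation $2\epsilon_{m}$ small compared with the inter-piece separation $r_{m}$ (and would break down if one tried to separate the pieces by large additive constants), and the passage from ``slope one along $\Gamma^{i}$'' for $g^{i}_{m}$ to ``slope one along $\Gamma$'' for $h_{m}$: one must run the porosity/projection scheme of Lemma~\ref{Lm.local_lip_const} so as to move, at the cost of errors $o(\operatorname{d}(x,y))$, from an arbitrary $y\in\Gamma$ near $x$ back into $\Gamma^{i}$ and then into $A$, checking that the McShane extension — about which nothing is known off $\Gamma_{m}(\delta)$ beyond its Lipschitz constant, which is precisely what absorbs those errors — does not spoil the limit.
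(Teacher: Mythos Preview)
Your approach is the paper's: glue the sawtooths $g^{i}_{m}$ over the good compacts $\Gamma^{i}_{m}(\delta)$ from Lemma~\ref{Lm.keystone_function}, control the cross-piece Lipschitz constant via the oscillation bound $|g^{i}_{m}|\le\epsilon_{m}$ together with the separation \eqref{eqq.41'}, McShane--extend, and then upgrade slope-one from the compact to all of $\Gamma$ by Lemma~\ref{Lm.local_lip_const} (whose proof, as you correctly note, needs only that $\Gamma$ is path-connected and $1$-rectifiable, not that $\gamma$ is simple). Your handling of the ``a.e.\ versus every'' issue in (4) and of the normalization $2\epsilon_{m}<r_{m}$ is in fact more explicit than the paper's.

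One technical slip: the paper works with the single-level set $\Gamma_{m}(\delta)=\Gamma_{m}\cap\bigcup_{i\le m}\Gamma^{i}_{m}(\delta)$, for which the cross-piece bound $\operatorname{d}(x,y)\ge r_{m}$ is immediate from \eqref{eqq.41'} because both points lie in $\Gamma_{m}$. Your cumulative union over $k\le m$ makes the nesting (1) transparent, but then the claim ``if they lie on distinct pieces of $\Gamma_{k}$ then $\operatorname{d}(x,y)\ge r_{m}$'' is not justified when the two points come from \emph{different} levels: if $x\in\Gamma^{i}\cap\Gamma_{k_{1}}$ and $y\in\Gamma^{j}\cap\Gamma_{k_{2}}$ with $k_{1}<k_{2}$, the point $x$ may sit inside one of the balls $B_{\rho_{k_{2}}}(x_{l})$ with $k_{1}\le l<k_{2}$ that were not yet removed at level $k_{1}$, so \eqref{eqq.41'} at level $k_{2}$ (or $m$) does not apply. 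This is a bookkeeping issue, not a missing idea --- the finitely many pairwise disjoint compacta $\Gamma_{m}(\delta)\cap\Gamma^{i}$, $i\le m$, have a positive minimal mutual distance $r'_{m}$, and choosing $\epsilon_{m}<r'_{m}/2$ (a further harmless strengthening of \eqref{eqq.41'}) repairs the estimate --- but as written that line needs adjustment.
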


\begin{proof}
We fix an arbitrary $\delta > 0$.
Using Proposition \ref{Prop.rectifiability}, given $i \in \mathbb{N}$ with $i \geq 2$, we put $x_{i}:=\Gamma^{i} \bigcap \bigcup_{j=1}^{i-1}\Gamma^{j}$.
Now we recall \eqref{eqq.set_gamma_m} and, for each $m \in \mathbb{N}$, we put
\begin{equation}
\label{eqq.3.10'}
\Gamma_{m}(\delta):=\Gamma_{m} \bigcap \bigcup_{i=1}^{m} \Gamma^{i}_{m}(\delta),
\end{equation}
where  $\Gamma^{i}_{m}(\delta)$ is the same as in Lemma \ref{Lm.keystone_function}. Since $\Gamma_{m}$ and $\Gamma^{i}_{m}(\delta)$, $i \in \{1,...,m\}$ are compact sets,
the set $\Gamma_{m}(\delta)$ is compact.

For each $m \in \mathbb{N}$, we set
\begin{equation}
\label{eqq.3.11'}
\widetilde{h}_{m}:=\sum\limits_{i=1}^{m}\chi_{\Gamma_{m}(\delta)} g^{i}_{m},
\end{equation}
where for each $i \in \mathbb{N}$, the functions $g^{i}_{m}$, $m \in \mathbb{N}$, are the same as in  Lemma \ref{Lm.keystone_function}.

At the beginning of this section we assumed that $\mathcal{H}^{1}(\Gamma) \in (0,+\infty)$. Hence, taking into account that $\Gamma = \cup_{i=1}^{\infty}\Gamma^{i}$ and
$\mathcal{H}^{1}(\Gamma^{j} \cap \cup_{i=1}^{j-1}\Gamma^{i}) = 0$ for all $j \geq 2$, we have
$\mathcal{H}^{1}(\cup_{i=m}^{\infty}\Gamma^{i}) \to 0$, $m \to \infty$. Since the sets $\Gamma^{i}$, $i \in \mathbb{N}$, are connected, this implies that
$\operatorname{diam}\Gamma^{i} \to 0$, $i \to \infty$. The above facts together with \eqref{eqq.set_gamma_m} and \eqref{eqq.3.10'} implies that properties (1) and (2)
of the sets $\Gamma_{m}(\delta)$, $m \in \mathbb{N}$ hold. By \eqref{eqq.3.2'}, \eqref{eqq.3.3'} and \eqref{eqq.3.11'} we conclude that
property (3) holds with
$$
\operatorname{N}_{m}:=\sum\limits_{i=1}^{m}\Bigl(1+\Bigl[\frac{l(\gamma^{i})}{\epsilon_{m}}\Bigr]\Bigr).
$$

By \eqref{eqq.3.11'} and Lemma \ref{Lm.keystone_function} we have $\widetilde{h}_{m} \in \operatorname{LIP}_{1+\delta}(\Gamma^{i}_{m}(\delta))$ for each $i \in \{1,...,m\}$.
At the same time, by \eqref{eqq.3.2'} and \eqref{eqq.3.3'} we have $\sup_{x \in \operatorname{X}}|\widetilde{h}_{m}(x)| \le \epsilon_{m}$.
Combining these observations with \eqref{eqq.41'}, \eqref{eqq.3.10'} and \eqref{eqq.4.18'} it is easy to see that $\widetilde{h}_{m} \in \operatorname{LIP}_{1+\delta}(\Gamma_{m}(\delta))$ and
\begin{equation}
\label{eqq.4.314}
\lim\limits_{\substack{y \to x \\ y \in \Gamma_{m}(\delta)}}\frac{|\widetilde{h}_{m}(y)-\widetilde{h}(x)|}{\operatorname{d}(x,y)}  = 1 \quad \hbox{for all} \quad x \in \Gamma_{m}(\delta).
\end{equation}
Now using Lemma \ref{Lm.McShane_Whitney} we deduce that there exists $h_{m} \in \operatorname{LIP}_{1+\delta}(\operatorname{X})$ such that $h_{m}|_{\Gamma_{m}(\delta)}=\widetilde{h}_{m}$.
As a result, combining \eqref{eqq.4.314} with Lemma \ref{Lm.local_lip_const} we conclude.
\end{proof}

\section{Tracking $AC_{p}$-regularity via post-compositions with Lipschitz functions}

Throughout this section, we fix a metric space $\operatorname{X}=(\operatorname{X},\operatorname{d})$ and real numbers $a < b$.
We divide this section into two subsections. In the first one, we present a proof of Theorem \ref{Th.main_1} based on
a powerful but not fully constructive result from \cite{Bate}.
In the second one, we present our alternative elementary approach to the proof of Theorem \ref{Th.main_1}
based on the construction given in \S 4.

\subsection{Indirect proof of Theorem \ref{Th.main_1}}
In this subsection, by $\operatorname{BLIP}_{1}(\operatorname{X})$ we denote the set of all bounded $1$-Lipschitz functions on $\operatorname{X}$.
Recall that a set $S \subset \operatorname{BLIP}_{1}(\operatorname{X})$ is  \textit{residual} (with respect to the $\sup$-norm) if it contains a countable
intersection of open dense subsets.

Combining  Theorem 7.6 from \cite{Bate} with Proposition \ref{Prop.rectifiability}, we immediately get the following crucial result.
\begin{Prop}
\label{Prop.Bate}
Let $\gamma \in C([a,b],\operatorname{X})$ be such that $\mathcal{H}^{1}(\Gamma) < +\infty$. Then the set
$$
S:=\{h \in \operatorname{BLIP}_{1}(\operatorname{X}): \mathcal{J}_{\Gamma}h(x) = 1 \hbox{ for $\mathcal{H}^{1}$-a.e. $x \in \Gamma$}\}
$$
is residual in $\operatorname{BLIP}_{1}(\operatorname{X})$.
\end{Prop}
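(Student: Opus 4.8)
The plan is to decompose $\Gamma$ via Proposition~\ref{Prop.rectifiability} into countably many injective Lipschitz curves, apply Theorem~7.6 of \cite{Bate} to each piece, and then intersect the resulting residual sets, using that $\operatorname{BLIP}_{1}(\operatorname{X})$ is a Baire space.

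First I would apply Proposition~\ref{Prop.rectifiability} (legitimate because $\mathcal{H}^{1}(\Gamma)<+\infty$) to obtain an at most countable family $\{\gamma^{i}\}_{i\in I}\subset C([0,1],\operatorname{X})$ of \emph{injective} Lipschitz curves with $\Gamma^{i}:=\gamma^{i}([0,1])$ and $\mathcal{H}^{1}(\Gamma\setminus\bigcup_{i\in I}\Gamma^{i})=0$; in particular $\Gamma$ is $1$-rectifiable. Since the exceptional set is $\mathcal{H}^{1}$-null, a function $h\in\operatorname{BLIP}_{1}(\operatorname{X})$ satisfies $\mathcal{J}_{\Gamma}h(x)=1$ for $\mathcal{H}^{1}$-a.e.\ $x\in\Gamma$ if and only if, for every $i\in I$ simultaneously, $\mathcal{J}_{\Gamma}h(x)=1$ for $\mathcal{H}^{1}$-a.e.\ $x\in\Gamma^{i}$. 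Moreover, $\mathcal{H}^{1}$-a.e.\ point of $\Gamma^{i}$ has density one in $\Gamma$ as well (Proposition~\ref{Prop_density} applied to the $1$-rectifiable set $\Gamma$), so using the biLipschitz charts on $\Gamma^{i}$ furnished by Lemma~\ref{Lm.inner_and_outer_metric} together with the well-posedness of Definition~\ref{Def.Jacobian} recorded in Remark~\ref{Rem.correct_Jacobian}, one gets $\mathcal{J}_{\Gamma}h(x)=\mathcal{J}_{\Gamma^{i}}h(x)$ for $\mathcal{H}^{1}$-a.e.\ $x\in\Gamma^{i}$ and every $h\in\operatorname{LIP}(\operatorname{X})$. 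Consequently $S=\bigcap_{i\in I}S_{i}$, where $S_{i}:=\{h\in\operatorname{BLIP}_{1}(\operatorname{X}):\mathcal{J}_{\Gamma^{i}}h(x)=1\ \text{for}\ \mathcal{H}^{1}\text{-a.e.}\ x\in\Gamma^{i}\}$.

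Next I would invoke Theorem~7.6 of \cite{Bate} applied to the injective Lipschitz curve $\gamma^{i}$ (whose image, off an $\mathcal{H}^{1}$-null set, is a biLipschitz image of a subset of $\mathbb{R}$ by Lemma~\ref{Lm.inner_and_outer_metric}): it asserts precisely that each $S_{i}$ is residual in $\operatorname{BLIP}_{1}(\operatorname{X})$. Finally, $\operatorname{BLIP}_{1}(\operatorname{X})$ equipped with the $\sup$-distance is a complete metric space, since a uniform limit of bounded $1$-Lipschitz functions is again bounded and $1$-Lipschitz; hence it is a Baire space, and any countable intersection of residual subsets is residual. As $I$ is at most countable, $S=\bigcap_{i\in I}S_{i}$ is residual, which is the assertion.

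The only step requiring genuine care — everything else being an immediate transcription of the cited results — is the identity $\mathcal{J}_{\Gamma}h=\mathcal{J}_{\Gamma^{i}}h$ holding $\mathcal{H}^{1}$-a.e.\ on $\Gamma^{i}$, i.e.\ that the Jacobian computed relative to the ambient rectifiable set $\Gamma$ coincides with the one computed relative to the chart $\Gamma^{i}$. This hinges on the fact that $\mathcal{H}^{1}$-a.e.\ point of $\Gamma^{i}$ is simultaneously a density point of $\Gamma^{i}$ and of $\Gamma$, so that in Definition~\ref{Def.Jacobian} one may take the biLipschitz set inside $\Gamma^{i}$, after which the claim follows from the invariance in Remark~\ref{Rem.correct_Jacobian}. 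One should also note that Theorem~7.6 of \cite{Bate} is applied with the image $\Gamma$ compact (being the continuous image of $[a,b]$), so no completeness assumption on $\operatorname{X}$ itself is needed.
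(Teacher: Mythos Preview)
Your proposal is correct and follows essentially the same route as the paper, which simply records that the result is obtained by combining Theorem~7.6 of \cite{Bate} with Proposition~\ref{Prop.rectifiability}. Your explicit decomposition into the pieces $\Gamma^{i}$ and the countable intersection of residual sets is more detailed than the paper's one-line derivation (Theorem~7.6 of \cite{Bate} applies directly to the $1$-rectifiable set $\Gamma$, so the piecewise argument is not strictly needed), but the underlying idea is identical.
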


\textit{Proof of Theorem \ref{Th.main_Bate}.} By Lemma \ref{Lm.continuity} we deduce that $\gamma \in C([a,b],\operatorname{X}$).
By Theorem \ref{Th.1} we have $\mathcal{H}^{1}(\Gamma) < +\infty$. An application of Proposition \ref{Prop.Bate} gives the existence
of a function $h \in \operatorname{BLIP}_{1}(\operatorname{X})$ such that $\mathcal{J}_{\Gamma}h(x) = 1$  for $\mathcal{H}^{1}$-a.e. $x \in \Gamma$.
By Propositions \ref{Prop.area_formula} and \ref{Prop.integral_repres_for_variation},
\begin{equation}
\begin{split}
&V_{h \circ \gamma}([a,b])=\int\limits_{h(\Gamma)}\#(h \circ\gamma)^{-1}(y)\,dy = \int\limits_{h(\Gamma)}\sum\limits_{x \in h^{-1}(y)}\#\gamma^{-1}(x)\,dy  \\
&=\int\limits_{\Gamma}
\#\gamma^{-1}(x)\mathcal{J}_{\Gamma}h(x)\,d\mathcal{H}^{1}(x) = \int\limits_{\Gamma}
\#\gamma^{-1}(x)\,d\mathcal{H}^{1}(x)=V_{\gamma}([a,b]).
\end{split}
\end{equation}
Hence $\gamma \in BV([a,b],\operatorname{X})$. Finally, replacing in the above formula $[a,b]$ by an arbitrary closed interval $[c,d]$ and making minor modifications, we get \eqref{eqq.variation}
and complete the proof.
\hfill$\Box$

Now we are ready to prove the first main result of this paper.

\textit{Proof of Theorem \ref{Th.main_1}.} Application of Theorem \ref{Th.main_Bate} implies that $\gamma \in C([a,b],\operatorname{X}) \cap BV([a,b],\operatorname{X})$.
Now we show that $\gamma \in AC_{1}([a,b],\operatorname{X})$. If, for some open interval $(c,d) \subset [a,b]$,
we put
$$
V_{\gamma}((c,d)):=V_{\gamma}([c,d])=\sup\limits_{[c',d']\subset (c,d)}V_{\gamma}([c',d']),
$$
and for any open (in the usual topology of the real line) set $G \subset [a,b]$,
which always can be represented as a union of at most countable family $\{(c_{i},d_{i})\}_{i \in I}$ of disjoint intervals, we put
$$
V_{\gamma}(G):=\sum\limits_{i \in I}V_{\gamma}((c_{i},d_{i})),
$$
then by Theorem 4.4.8 from \cite{HKST} we obtain a Radon  measure $\nu_{\gamma}$ on $[a,b]$.

By Theorem \ref{Th.main_Bate}, it follows easily that for some $h \in \operatorname{LIP}(\operatorname{X})$ we have $V_{\gamma}(G)=V_{h\circ\gamma}(G)$
for every open (in the usual topology of the real line) set $G \subset [a,b]$. The corresponding Radon measure $\nu_{h\circ\gamma}$ coincide with $\nu_{\gamma}$ on any Borel set $E \subset [a,b]$.
On the other hand, $h \circ \gamma$ belongs to the space $AC_{1}([a,b])$. This clearly implies that $\nu_{h\circ\gamma}$ and $\nu_{\gamma}$ are absolutely
continuous with respect to the measure $\mathcal{L}^{1}$ restricted to $[a,b]$. Hence $\gamma \in AC_{1}([a,b],\operatorname{X})$.

Finally, in order to show that $\gamma \in AC_{p}([a,b],\operatorname{X})$ it is sufficient to note that by the $N$-Luzin property of $\gamma$ and Lemma \ref{Lm.Jacobian_Lip_constant}
we have $|\dot{\gamma}(t)|=|(h\circ\gamma)'(t)|$ for $\mathcal{L}^{1}$-a.e. $t \in [a,b]$.
\hfill$\Box$

\subsection{Direct proof of Theorem \ref{Th.main_1}}
We start with the elementary fact about absolutely continuous functions. We present the details for the completeness.
\begin{Lm}
\label{elementary_lemma}
Let $\{\alpha_{j}\} \subset (0,+\infty)$ be such that $\sum_{j =1}^{\infty}\alpha_{j} < +\infty$. Let $\{f_{j}\} \subset AC_{1}([a,b])$ be such that
\begin{equation}
\notag
\sum\limits_{j=1}^{\infty}\alpha_{j}\|f'_{j}\|_{L_{1}([a,b])} < +\infty.
\end{equation}
Then the series $\sum_{j=1}^{\infty}\alpha_{j}f_{j}$ converges uniformly to some function $f \in AC_{1}([a,b])$ and
\begin{equation}
\label{eqq.41}
|f'(t)| \le \sum\limits_{j=1}^{\infty}\alpha_{j}|f'_{j}(t)| \quad \hbox{for $\mathcal{L}^{1}$-a.e.  $t \in [a,b]$}.
\end{equation}
\end{Lm}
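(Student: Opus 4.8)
The plan is to separate the genuine content of the lemma from routine bookkeeping. The uniform convergence of $\sum_{j=1}^{\infty}\alpha_{j}f_{j}$ is, strictly speaking, a statement about the $f_{j}$ themselves and not only about their derivatives: it requires a uniform control such as $\sup_{j}\|f_{j}\|_{C([a,b])}<+\infty$, which should be read into the hypotheses (and holds in the situations in which the lemma is applied). Granting this, the Weierstrass $M$-test with $M_{j}:=\alpha_{j}\|f_{j}\|_{C([a,b])}$ and $\sum_{j}M_{j}\le\bigl(\sup_{j}\|f_{j}\|_{C([a,b])}\bigr)\sum_{j}\alpha_{j}<+\infty$ gives a function $f\in C([a,b])$ with $S_{N}:=\sum_{j=1}^{N}\alpha_{j}f_{j}\to f$ uniformly on $[a,b]$. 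So the substance is that $f\in AC_{1}([a,b])$ and that \eqref{eqq.41} holds, and this I would establish via the fundamental theorem of calculus for $AC_{1}$ functions together with Tonelli's theorem.

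First I would introduce the candidate derivative $g(t):=\sum_{j=1}^{\infty}\alpha_{j}|f_{j}'(t)|$, a nonnegative Borel function on $[a,b]$. By Tonelli's theorem, $\int_{a}^{b}g(t)\,dt=\sum_{j=1}^{\infty}\alpha_{j}\|f_{j}'\|_{L_{1}([a,b])}<+\infty$, so $g\in L_{1}([a,b])$ and in particular $g(t)<+\infty$ for $\mathcal{L}^{1}$-a.e.\ $t$. For every such $t$ the numerical series $\sum_{j}\alpha_{j}f_{j}'(t)$ converges absolutely; denote its sum by $\varphi(t)$. Then $|\varphi(t)|\le g(t)$ for $\mathcal{L}^{1}$-a.e.\ $t$, hence $\varphi\in L_{1}([a,b])$.

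Next I would pass to the limit in the partial sums. Applying the fundamental theorem of calculus to each $f_{j}\in AC_{1}([a,b])$ and summing, $S_{N}(t)=S_{N}(a)+\int_{a}^{t}\psi_{N}(s)\,ds$, where $\psi_{N}:=\sum_{j=1}^{N}\alpha_{j}f_{j}'$. The tail estimate $\|\psi_{N}-\varphi\|_{L_{1}([a,b])}\le\sum_{j>N}\alpha_{j}\|f_{j}'\|_{L_{1}([a,b])}\to 0$ shows that $\int_{a}^{t}\psi_{N}(s)\,ds\to\int_{a}^{t}\varphi(s)\,ds$ uniformly in $t\in[a,b]$; since also $S_{N}(a)\to f(a)$, letting $N\to\infty$ gives $f(t)=f(a)+\int_{a}^{t}\varphi(s)\,ds$ for all $t\in[a,b]$. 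Because $\varphi\in L_{1}([a,b])$, the converse direction of the fundamental theorem of calculus then yields $f\in AC_{1}([a,b])$ with $f'(t)=\varphi(t)$ for $\mathcal{L}^{1}$-a.e.\ $t$.

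Finally, combining $f'=\varphi$ a.e.\ with $|\varphi(t)|\le g(t)$ gives $|f'(t)|\le g(t)=\sum_{j=1}^{\infty}\alpha_{j}|f_{j}'(t)|$ for $\mathcal{L}^{1}$-a.e.\ $t\in[a,b]$, which is exactly \eqref{eqq.41}. I do not expect a serious obstacle here: the argument rests only on Tonelli's theorem (for the $L_{1}$-bound on $g$ and the a.e.\ absolute convergence defining $\varphi$), on the elementary $L_{1}$-tail estimate that upgrades the convergence of $\int_{a}^{t}\psi_{N}$ to uniformity in $t$, and on the two directions of the fundamental theorem of calculus; the only input needed beyond the stated hypothesis on the derivatives is the uniform boundedness used to secure the uniform convergence of $\sum_{j}\alpha_{j}f_{j}$ in the first place.
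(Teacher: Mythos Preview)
Your proof is correct and essentially matches the paper's: both rely on Tonelli/Beppo Levi to interchange sum and integral and obtain $g:=\sum_{j}\alpha_{j}|f'_{j}|\in L_{1}([a,b])$, and both take the uniform convergence as granted (the paper simply writes ``by the assumptions of the lemma'', so your observation that an extra boundedness hypothesis is implicitly needed applies equally there). The only difference is presentational: the paper bounds $|f(t_{1})-f(t_{2})|\le\int_{t_{1}}^{t_{2}}g$ directly and reads off both $f\in AC_{1}([a,b])$ and \eqref{eqq.41} from Definition~\ref{Def.4}, whereas you first identify $f'=\sum_{j}\alpha_{j}f'_{j}$ explicitly via the fundamental theorem of calculus before bounding.
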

\begin{proof}
By the assumptions of the lemma, the series $\sum_{j=1}^{\infty}\alpha_{j}f_{j}$ converges uniformly to some $f \in C([a,b])$.
Hence, given $a \le t_{1} < t_{2} \le b$ by the Beppo Levi Theorem we have
$$
|f(t_{1})-f(t_{2})| \le \sum\limits_{j=1}^{\infty}\alpha_{j}|f_{j}(t_{1})-f_{j}(t_{2})| \le \sum\limits_{j=1}^{\infty}\alpha_{j}\int\limits_{t_{1}}^{t_{2}}|f'_{j}(t)|\,dt =  \int\limits_{t_{1}}^{t_{2}}\sum\limits_{j=1}^{\infty}\alpha_{j}|f'_{j}(t)|\,dt.
$$
By Definition \ref{Def.4} (for the particular case of the real-valued curve), this implies that $f \in AC_{1}([a,b])$ and, furthermore, \eqref{eqq.41} holds.
\end{proof}

The following lemma is quite important for the sequel. It has the same flavor as Lemma \ref{Lm.Banah_Steinhauz} and can
be proved by a similar method.
\begin{Lm}
\label{Lm.Bate_substitution}
Let $\gamma \in AC_{1}([a,b],\operatorname{X})$ and $p \in (1,\infty]$. Assume that there exists a family of Borel sets $\{\Gamma_{m}\}_{m \in \mathbb{N}}$ and functions $\{h_{m}\} \subset \operatorname{LIP}^{\rm str}(\Gamma)$ such that
the following holds:

(1) $\Gamma_{m} \subset \Gamma_{m+1} \subset \Gamma$ for all $m \in \mathbb{N}$ and $\mathcal{H}^{1}(\Gamma \setminus \Gamma_{m}) \to 0$, $m \to \infty$;

(2) $\lim_{m \to \infty}\||\dot{\gamma}|\|_{L_{p}}(\gamma^{-1}(\Gamma_{m}))  = +\infty$;

(3) $(h_{m} \circ \gamma)' \in L_{p}([a,b])$ for all $m \in \mathbb{N}$;

(4) $|(h_{m} \circ \gamma)'(t)|=|\dot{\gamma}(t)|$ for $\mathcal{L}^{1}$-a.e. $t \in \gamma^{-1}(\Gamma_{m})$.

Then there exists a function $\underline{h} \in \operatorname{LIP}^{\rm str}(\Gamma) \cap \operatorname{LIP}(\operatorname{X})$ such that $\underline{h} \circ \gamma \notin AC_{p}([a,b])$.

\end{Lm}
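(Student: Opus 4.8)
\textbf{Proof plan for Lemma \ref{Lm.Bate_substitution}.}
The plan is to mimic the strategy of Lemma \ref{Lm.Banah_Steinhauz}, replacing the variation functionals $p_m(h)=V_{h\circ\gamma}(\operatorname{T}_m)$ by functionals that detect failure of $AC_p$-regularity, and then to invoke Proposition \ref{Prop.Banach_Steinhaus_substitution} in the Banach space $\operatorname{LIP}^{\rm str}(\Gamma)$ (a Banach space by Remark \ref{Rem.Lip_Banach}). The natural choice is
$$
p_m(h):=\|(h\circ\gamma)'\|_{L_p(\gamma^{-1}(\Gamma_m))}
$$
for $h\in\operatorname{LIP}^{\rm str}(\Gamma)$, where $(h\circ\gamma)'$ makes sense $\mathcal{L}^1$-a.e.\ because $\gamma\in AC_1([a,b],\operatorname{X})$ forces $h\circ\gamma\in AC_1([a,b])$ (hence classically differentiable a.e.). First I would check that each $p_m$ is a nonnegative, positively homogeneous, countably subadditive functional on $\operatorname{LIP}^{\rm str}(\Gamma)$: positive homogeneity and nonnegativity are immediate, and countable subadditivity follows from the fact that if $\sum_i \alpha_i h_i$ converges unconditionally in $\operatorname{LIP}^{\rm str}(\Gamma)$ then it converges in the $\sup$-norm with uniformly bounded Lipschitz constants, so Lemma \ref{elementary_lemma} (applied on the set $\gamma^{-1}(\Gamma_m)$, or rather its analogue) gives $|(\sum_i\alpha_i h_i)\circ\gamma)'|\le\sum_i\alpha_i|(h_i\circ\gamma)'|$ a.e., and then the triangle inequality in $L_p(\gamma^{-1}(\Gamma_m))$ together with monotone convergence yields $p_m(\sum_i h_i)\le\sum_i p_m(h_i)$ (here the $\alpha_i$ are absorbed into the $h_i$).

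Next I would verify the blow-up hypothesis of Proposition \ref{Prop.Banach_Steinhaus_substitution}, namely $\sup_m\sup_{\|h\|_{\operatorname{LIP}^{\rm str}(\Gamma)}\le 1}p_m(h)=+\infty$. By assumptions (3) and (4), for each $m$ the function $h_m$ (rescaled to have $\operatorname{LIP}^{\rm str}(\Gamma)$-norm at most $1$; note its Lipschitz constant is controlled since $|(h_m\circ\gamma)'|=|\dot\gamma|$ a.e.\ on $\gamma^{-1}(\Gamma_m)$ and one may first normalize, or use that only a bounded multiple matters) satisfies $p_m(h_m)\ge c\,\||\dot\gamma|\|_{L_p(\gamma^{-1}(\Gamma_m))}$, and by (2) the right-hand side tends to $+\infty$. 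A small technical point I would handle carefully here: the $h_m$ coming from Theorem \ref{Th.key_function} are only $(1+\delta)$-Lipschitz and bounded by $\epsilon_m\to 0$, so after adding a constant and dividing by $1+\delta+\sup|h_m|$ their $\operatorname{LIP}^{\rm str}(\Gamma)$-norm is $\le 1$ while $p_m$ only shrinks by a bounded factor; this keeps the supremum infinite. Having checked the hypotheses, Proposition \ref{Prop.Banach_Steinhaus_substitution} produces $\underline h\in\operatorname{LIP}^{\rm str}(\Gamma)$ with $\|\underline h\|_{\operatorname{LIP}^{\rm str}(\Gamma)}\le 1$ and $\sup_m p_m(\underline h)=+\infty$; by the McShane--Whitney Lemma \ref{Lm.McShane_Whitney} we extend $\underline h$ to $\operatorname{LIP}(\operatorname{X})$ without increasing its Lipschitz constant.

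It remains to conclude that $\underline h\circ\gamma\notin AC_p([a,b])$. If it were in $AC_p([a,b])$, then $(\underline h\circ\gamma)'\in L_p([a,b])$, so in particular $\|(\underline h\circ\gamma)'\|_{L_p(\gamma^{-1}(\Gamma_m))}\le\|(\underline h\circ\gamma)'\|_{L_p([a,b])}<+\infty$ uniformly in $m$, i.e.\ $\sup_m p_m(\underline h)<+\infty$, a contradiction. (Here one uses that $\gamma^{-1}(\Gamma_m)$ is a measurable subset of $[a,b]$, which holds because $\Gamma_m$ is Borel and $\gamma$ is continuous, hence Borel.) The main obstacle I anticipate is the verification of countable subadditivity of $p_m$ — specifically, justifying the a.e.\ inequality $|((\sum_i h_i)\circ\gamma)'|\le\sum_i|(h_i\circ\gamma)'|$ for an unconditionally convergent series in $\operatorname{LIP}^{\rm str}(\Gamma)$, which requires checking that unconditional convergence in that norm forces both uniform convergence of the compositions and a dominated-convergence-type control of the derivatives so that Lemma \ref{elementary_lemma} applies with the sets restricted to $\gamma^{-1}(\Gamma_m)$; once this is in place, everything else is a routine transcription of the proof of Lemma \ref{Lm.Banah_Steinhauz}.
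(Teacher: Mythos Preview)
Your proposal is correct and follows essentially the same route as the paper: define $p_m(h):=\|(h\circ\gamma)'\|_{L_p(\gamma^{-1}(\Gamma_m))}$ on the Banach space $\operatorname{LIP}^{\rm str}(\Gamma)$, verify the hypotheses of Proposition~\ref{Prop.Banach_Steinhaus_substitution}, and finish with the McShane--Whitney extension. Your write-up is actually more careful than the paper's terse proof, since you explicitly flag and address the two genuine verification points (countable subadditivity via Lemma~\ref{elementary_lemma}, and the normalization of the $h_m$ needed for the blow-up hypothesis) that the paper simply asserts ``follow from (1)--(4)''.
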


\begin{proof} For each $m \in \mathbb{N}$, we define the countably subadditive positively homogeneous functional on  the space $\operatorname{LIP}^{\rm str}(\Gamma)$ be letting
$$
p_{m}(h):=\|(h\circ\gamma)'\|_{\gamma^{-1}(\Gamma_{m})}, \quad h \in \operatorname{LIP}^{\rm str}(\Gamma).
$$
It follows from (1)--(4) that the sequence $\{p_{m}\}_{m=1}^{\infty}$ satisfies all the assumptions of Proposition \ref{Prop.Banach_Steinhaus_substitution}.
Hence taking into account Remark \ref{Rem.Lip_Banach} we deduce existence of $\underline{h} \in \operatorname{LIP}^{\rm str}(\Gamma)$ such that
$$
\|(h \circ \gamma)'\|_{L_{p}([a,b])} = \sup\limits_{m \in \mathbb{N}}p_{m}(h) = +\infty.
$$
By Lemma \ref{Lm.McShane_Whitney}, we get $\underline{h} \in \operatorname{LIP}(\operatorname{X}) \cap \operatorname{LIP}^{\rm str}(\Gamma)$, which completes the proof.
\end{proof}

\begin{Th}
\label{Th.total_variation}
Let a curve $\gamma \in C([a,b],\operatorname{X})$ be such that $h \circ \gamma \in BV([a,b])$ for each function
$h \in \operatorname{LIP}(\operatorname{X})$. Then
$\gamma \in BV([a,b])$.
\end{Th}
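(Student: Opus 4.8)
The plan is to derive finiteness of the variation from the identity $V_{\gamma}([a,b])=\int_{\Gamma}\#\gamma^{-1}(y)\,d\mathcal{H}^{1}(y)$ of Proposition~\ref{Prop.integral_repres_for_variation}, bounding the right-hand side by means of the constructive Theorem~\ref{Th.key_function}, the area formula, and a Banach--Steinhaus substitution in the spirit of Lemma~\ref{Lm.Banah_Steinhauz}. First, by Theorem~\ref{Th.1} we have $\mathcal{H}^{1}(\Gamma)<+\infty$. The case $\mathcal{H}^{1}(\Gamma)=0$ is degenerate: the $1$-Lipschitz function $h=\operatorname{d}(\gamma(a),\cdot)$ satisfies $\mathcal{H}^{1}(h(\Gamma))\le\mathcal{H}^{1}(\Gamma)=0$, so the continuous function $h\circ\gamma$ has a connected image of measure zero and is therefore constant; thus $\gamma\equiv\gamma(a)$ and $V_{\gamma}([a,b])=0$. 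Hence from now on we may assume $\mathcal{H}^{1}(\Gamma)\in(0,+\infty)$.

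Fix $\delta=1$ and apply Theorem~\ref{Th.key_function}: this furnishes an increasing sequence of compact sets $\Gamma_{m}:=\Gamma_{m}(1)\subset\Gamma$ with $\mathcal{H}^{1}(\Gamma\setminus\Gamma_{m})\to0$, functions $h_{m}\in\operatorname{LIP}_{2}(\operatorname{X})$, and integers $\operatorname{N}_{m}$, such that $\lim_{y\to x,\,y\in\Gamma}|h_{m}(y)-h_{m}(x)|/\operatorname{d}(x,y)=1$ for every $x\in\Gamma_{m}$. Subtracting from each $h_{m}$ the constant $h_{m}(x_{0})$, $x_{0}\in\Gamma$ fixed --- which changes neither $V_{h_{m}\circ\gamma}$, nor $\operatorname{lip}h_{m}$, nor $\mathcal{J}_{\Gamma}h_{m}$ --- and using Lemma~\ref{Lm.1} (recall $\Gamma$ is compact, hence bounded), we may assume $\sup_{m}\|h_{m}|_{\Gamma}\|_{\operatorname{LIP}^{\rm str}(\Gamma)}\le L$ for some $L<+\infty$. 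Two facts are then key. First, $M:=\sup_{m}V_{h_{m}\circ\gamma}([a,b])<+\infty$: were this supremum infinite, Lemma~\ref{Lm.Banah_Steinhauz} would yield $\underline{h}\in\operatorname{LIP}(\operatorname{X})$ with $\underline{h}\circ\gamma\notin BV([a,b])$, contradicting the hypothesis. Second, $\mathcal{J}_{\Gamma}h_{m}(x)=1$ for $\mathcal{H}^{1}$-a.e.\ $x\in\Gamma_{m}$: by Lemma~\ref{Lm.Jacobian_Lip_constant} the limit $\lim_{y\to x}|h_{m}(y)-h_{m}(x)|/\operatorname{d}(x,y)$ exists and equals $\mathcal{J}_{\Gamma}h_{m}(x)$ for $\mathcal{H}^{1}$-a.e.\ $x\in\Gamma$, while property~(4) of Theorem~\ref{Th.key_function} forces this limit to be $1$ at every point of $\Gamma_{m}$.

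With these in hand, apply the area formula (Proposition~\ref{Prop.area_formula}) to $h_{m}$ with the nonnegative Borel weight $\theta:=\chi_{\Gamma_{m}}\cdot\#\gamma^{-1}(\cdot)$. Since $\mathcal{J}_{\Gamma}h_{m}=1$ $\mathcal{H}^{1}$-a.e.\ on $\Gamma_{m}$ and $\#(h_{m}\circ\gamma)^{-1}(t)=\sum_{y\in h_{m}^{-1}(t)}\#\gamma^{-1}(y)$,
\begin{align*}
\int_{\Gamma_{m}}\#\gamma^{-1}(y)\,d\mathcal{H}^{1}(y)
&=\int_{h_{m}(\Gamma)}\ \sum_{y\in h_{m}^{-1}(t)}\chi_{\Gamma_{m}}(y)\,\#\gamma^{-1}(y)\,dt\\
&\le\int_{h_{m}(\Gamma)}\#(h_{m}\circ\gamma)^{-1}(t)\,dt=V_{h_{m}\circ\gamma}([a,b])\le M,
\end{align*}
where the penultimate equality is Proposition~\ref{Prop.integral_repres_for_variation} for $h_{m}\circ\gamma$. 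Letting $m\to\infty$, monotone convergence together with $\mathcal{H}^{1}(\Gamma\setminus\bigcup_{m}\Gamma_{m})=0$ yields $\int_{\Gamma}\#\gamma^{-1}(y)\,d\mathcal{H}^{1}(y)\le M$; and Proposition~\ref{Prop.integral_repres_for_variation}, applied now to $\gamma$ itself, identifies the left-hand side with $V_{\gamma}([a,b])$. Hence $V_{\gamma}([a,b])\le M<+\infty$, i.e.\ $\gamma\in BV([a,b],\operatorname{X})$.

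Most of the weight of the argument sits in the two auxiliary inputs (Theorem~\ref{Th.1} and the constructive Theorem~\ref{Th.key_function}), but within this proof the delicate point is the uniformity over $m$. Unlike the indirect route, where Theorem~\ref{Th.main_Bate} supplies one Lipschitz function with unit Jacobian on all of $\Gamma$, the explicit sawtooth functions of Theorem~\ref{Th.key_function} depend genuinely on $m$ and only ``resolve'' $\Gamma_{m}$, so no single post-composition suffices. Turning the per-$m$ finiteness $V_{h_{m}\circ\gamma}([a,b])<+\infty$ --- all that the hypothesis directly gives --- into the uniform bound $M$ that licenses passing to the limit $m\to\infty$ is exactly the role of Lemma~\ref{Lm.Banah_Steinhauz} (and hence of Proposition~\ref{Prop.Banach_Steinhaus_substitution}).
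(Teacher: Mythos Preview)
Your proof is correct and follows essentially the same route as the paper: invoke Theorem~\ref{Th.1} to get $\mathcal{H}^{1}(\Gamma)<+\infty$, feed Theorem~\ref{Th.key_function} into the area formula and Proposition~\ref{Prop.integral_repres_for_variation} to compare $\int_{\Gamma_{m}}\#\gamma^{-1}\,d\mathcal{H}^{1}$ with $V_{h_{m}\circ\gamma}([a,b])$, and then use Lemma~\ref{Lm.Banah_Steinhauz} to pass from per-$m$ finiteness to a uniform bound. The paper packages this as a proof by contradiction (assume $V_{\gamma}=+\infty$, deduce $V_{h_{n_{j}}\circ\gamma}\to\infty$, apply Lemma~\ref{Lm.Banah_Steinhauz}), whereas you run the contrapositive directly and also handle the degenerate case $\mathcal{H}^{1}(\Gamma)=0$ explicitly --- a point the paper leaves implicit.
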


\begin{proof}
Assume on the contrary that $V_{\gamma}([a,b]) = +\infty$ and fix $\delta > 0$. We apply Theorem \ref{Th.key_function} and find
a sequence $\{\Gamma_{m}(\delta)\}_{m = 1}^{\infty}$ of subsets of $\Gamma$ and a sequence of functions
$\{h_{m}\}_{m=1}^{\infty} \subset \operatorname{LIP}_{1+\delta}(\operatorname{X})$
with the corresponding properties.
By Proposition \ref{Prop.integral_repres_for_variation}, for each $j \in \mathbb{N}$ there is $\Gamma_{n_{j}}(\delta)$ such that
$\int\limits_{\Gamma_{n_{j}}(\delta)}\#\gamma^{-1}(x)\,d\mathcal{H}^{1}(x) \geq j.$
Since the set $\Gamma_{n_{j}}(\delta)$ is 1-rectifiable, we obtain
\begin{equation}
\begin{split}
&V_{h_{n_{j}}}([a,b])=\int\limits_{h_{n_{j}}(\Gamma)}\#(h_{n_{j}}\circ\gamma)^{-1}(y)\,dy = \int\limits_{h_{n_{j}}(\Gamma)}\sum\limits_{x \in h^{-1}_{n_{j}}(y)}\#\gamma^{-1}(x)\,dy  \\
&=\int\limits_{\Gamma}
\#\gamma^{-1}(x)\mathcal{J}_{\Gamma}h_{n_{j}}(x)\,d\mathcal{H}^{1}(x) \geq \int\limits_{\Gamma_{n_{j}}}
\#\gamma^{-1}(x)\,d\mathcal{H}^{1}(x).
\end{split}
\end{equation}
As a result, for each $j \in \mathbb{N}$ we deduce the existence of a function $h_{n_{j}} \in \operatorname{LIP}_{1+\delta}(\operatorname{X})$ such that $V_{h_{n_{j}}}([a,b]) \geq j$.
Combination of this observation with Lemma \ref{Lm.Banah_Steinhauz} completes the proof.
\end{proof}

\begin{Th}
\label{Th.ac_property}
Let $\gamma \in C([a,b],\operatorname{X})$ be such that $h \circ \gamma$ satisfies the $N$-Luzin property
for every function $h \in \operatorname{LIP}(\operatorname{X})$. Then
$\gamma$ satisfies the $N$-Luzin property.
\end{Th}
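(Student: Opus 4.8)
The plan is to argue by contradiction, producing — in the spirit of Theorems \ref{Th.1} and \ref{Th.total_variation} — a single Lipschitz function $h$ for which $h\circ\gamma$ fails the $N$-Luzin property. Suppose $\gamma$ does not satisfy it: there is $E\subset[a,b]$ with $\mathcal{L}^{1}(E)=0$ but $\mathcal{H}^{1}(\gamma(E))>0$. Replacing $E$ by a $G_{\delta}$ superset of the same (null) Lebesgue measure only enlarges $\gamma(E)$, so we may take $E$ Borel; then $\gamma(E)$ is the continuous image of a Borel subset of the Polish space $[a,b]$ inside the compact (hence Polish) space $\Gamma$, so $\gamma(E)$ is analytic and in particular universally measurable. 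If $\mathcal{H}^{1}(\Gamma)=0$ there is nothing to prove, so we work under $\mathcal{H}^{1}(\Gamma)\in(0,+\infty)$ — exactly the situation arising when this theorem is applied inside the proof of Theorem \ref{Th.main_1}, since there one has already established $\gamma\in BV([a,b],\operatorname{X})$ via Theorem \ref{Th.total_variation}, whence $\mathcal{H}^{1}(\Gamma)\le l(\gamma)<+\infty$. Under this assumption $\Gamma$ is $1$-rectifiable and $\mathcal{H}^{1}|_{\Gamma}$ is a finite Borel, hence Radon, measure on the compact metric space $\Gamma$. The goal is to exhibit $h\in\operatorname{LIP}(\operatorname{X})$ with $\mathcal{L}^{1}\bigl((h\circ\gamma)(E)\bigr)>0$.

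The next step is to locate a good compact piece of $\gamma(E)$. Since $\gamma(E)$ is universally measurable and $\mathcal{H}^{1}|_{\Gamma}$ is Radon, inner regularity yields a compact $C\subset\gamma(E)$ with $\mathcal{H}^{1}(C)>0$. Now fix $\delta>0$ and invoke Theorem \ref{Th.key_function}: it provides compact sets $\Gamma_{m}(\delta)\subset\Gamma$ with $\mathcal{H}^{1}(\Gamma\setminus\Gamma_{m}(\delta))\to0$ and functions $h_{m}\in\operatorname{LIP}_{1+\delta}(\operatorname{X})$. Choosing $m$ so large that $C':=C\cap\Gamma_{m}(\delta)$ still satisfies $\mathcal{H}^{1}(C')>0$, put $h:=h_{m}$. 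By property (4) of Theorem \ref{Th.key_function} the limit defining $\operatorname{lip}h$ equals $1$ at every point of $\Gamma_{m}(\delta)$, so Lemma \ref{Lm.Jacobian_Lip_constant} gives $\mathcal{J}_{\Gamma}h(x)=1$ for $\mathcal{H}^{1}$-a.e. $x\in C'$.

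Finally, the area formula closes the argument. Applying Proposition \ref{Prop.area_formula} with the Borel weight $\theta=\chi_{C'}$,
\begin{equation}
\notag
0<\mathcal{H}^{1}(C')=\int\limits_{C'}\mathcal{J}_{\Gamma}h(y)\,d\mathcal{H}^{1}(y)=\int\limits_{h(\Gamma)}\#\bigl(h^{-1}(t)\cap C'\bigr)\,dt,
\end{equation}
so the set $\{t:h^{-1}(t)\cap C'\neq\emptyset\}=h(C')$ has positive $\mathcal{L}^{1}$-measure. Since $C'\subset\gamma(E)$, the compact set $h(C')$ is contained in $(h\circ\gamma)(E)$, hence $\mathcal{L}^{1}\bigl((h\circ\gamma)(E)\bigr)\ge\mathcal{L}^{1}(h(C'))>0$ while $\mathcal{L}^{1}(E)=0$, contradicting the $N$-Luzin property of $h\circ\gamma$.

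I expect the genuine difficulty to lie not in the displayed chain — which is essentially the computation already used in the proof of Theorem \ref{Th.total_variation} — but in the preparatory reductions: first, the measurability bookkeeping that legitimizes passing from the arbitrary null set $E$ to a compact subset $C'$ of $\gamma(E)$ of positive length (so that the Borel-weighted area formula of Proposition \ref{Prop.area_formula} applies verbatim), and second, justifying that we may assume $\mathcal{H}^{1}(\Gamma)<+\infty$. The latter is automatic in the intended application, where Theorem \ref{Th.total_variation} furnishes $\gamma\in BV([a,b],\operatorname{X})$; treating a curve with $\mathcal{H}^{1}(\Gamma)=+\infty$ in isolation would require an extra argument showing that already some coordinate-type Lipschitz post-composition must fail the $N$-Luzin property.
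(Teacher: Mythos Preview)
Your argument is correct and follows essentially the same route as the paper: argue by contradiction, invoke Theorem \ref{Th.key_function} to obtain $h_{m}$ with Jacobian $1$ on a large compact piece $\Gamma_{m}(\delta)$, and use the area formula to show $(h_{m}\circ\gamma)(E)$ has positive Lebesgue measure.

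The one notable implementation difference is this: where you pass to a compact $C'\subset\gamma(E)\cap\Gamma_{m}(\delta)$ via inner regularity (after some measurability bookkeeping) so that $\chi_{C'}$ is a legitimate Borel weight in Proposition \ref{Prop.area_formula}, the paper instead exploits property (3) of Theorem \ref{Th.key_function}, namely the finite multiplicity bound $\#\bigl(h_{m}^{-1}(y)\cap\Gamma_{m}(\delta)\bigr)\le \operatorname{N}_{m}$, to write directly
\[
\mathcal{L}^{1}\bigl((h_{m}\circ\gamma)(E)\bigr)\ge\frac{1}{\operatorname{N}_{m}}\int_{\gamma(E)\cap\Gamma_{m}(1)}\mathcal{J}_{\Gamma}h_{m}\,d\mathcal{H}^{1}\ge\frac{\mathcal{H}^{1}(\gamma(E)\cap\Gamma_{m}(1))}{\operatorname{N}_{m}}>0.
\]
This sidesteps the passage to a compact subset and the attendant analytic-set discussion. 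Your observation that the proof tacitly requires $\mathcal{H}^{1}(\Gamma)<+\infty$ (needed for Theorem \ref{Th.key_function}) is correct; the paper relies on this as well, and it is indeed supplied by Theorem \ref{Th.total_variation} in the application to Theorem \ref{Th.main_1}.
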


\begin{proof}
Assume on the contrary that $\gamma$ fails to satisfy the $N$-Luzin property. Then, there exists a set $E \subset [a,b]$ with $\mathcal{L}^{1}(E) = 0$
such that $\mathcal{H}^{1}(\gamma(E)) > 0$. We apply Theorem \ref{Th.key_function} and fix an arbitrary $m \in \mathbb{N}$ such that $\mathcal{H}^{1}(\gamma(E) \cap \Gamma_{m}(1)) > 0$. Hence, by Proposition \ref{Prop.area_formula}
\begin{equation}
\notag
\begin{split}
&\mathcal{L}^{1}\Bigl((h_{m} \circ \gamma)(E)\Bigr) \geq \frac{1}{\operatorname{N}_{m}}\int\limits_{(h_{m} \circ \gamma)(E)}\#(h_{m}^{-1}(t)\cap \Gamma_{m}(1))\,dt\\
&\geq \frac{1}{\operatorname{N}_{m}}\int\limits_{\gamma(E) \cap \Gamma_{m}(1)}\mathcal{J}_{\Gamma}h_{m}(x)\,d\mathcal{H}^{1}(x) \geq \frac{\mathcal{H}^{1}(\gamma(E) \cap \Gamma_{m}(1))}{\operatorname{N}_{m}} > 0.
\end{split}
\end{equation}
Hence, $h_{m} \circ \gamma$ fails to satisfy the $N$-Luzin property. This contradiction completes the proof.
\end{proof}

Now we are ready to prove the first main result of this paper.

\textit{Proof of Theorem \ref{Th.main_1}}. By Proposition \ref{Prop.BZ} and Theorems \ref{Th.total_variation}, \ref{Th.ac_property} we deduce that the curve $\gamma$ belongs to $AC_{1}([a,b],\operatorname{X})$.
Now an application of Lemma \ref{Lm.Bate_substitution} in combination with Theorem \ref{Th.key_function} proves the claim.
\hfill$\Box$

\section{Tracking $W^{1}_{p}$-regularity via post-compositions with Lipschitz functions}

Throughout the section, we fix a \textit{complete separable} metric space $\operatorname{X}=(\operatorname{X},\operatorname{d})$ and real numbers $a < b$.
For each $\epsilon > 0$, we fix a maximal $\epsilon$-separated subset
$\mathcal{N}_{\epsilon}:=\{x_{\epsilon,i}\}_{i \in I_{\epsilon}}$ of $\operatorname{X}$. Clearly, $\mathcal{N}_{\epsilon}$ is at most countable for every $\epsilon > 0$.
Given $x \in \operatorname{X}$, we put $h_{x}:=\operatorname{d}(x,\cdot)$.
If $\gamma \in \mathfrak{B}([a,b],\operatorname{X})$ is such that $[h_{x} \circ \gamma] \cap C([a,b],\operatorname{X}) \neq \emptyset$, then
$g^{\gamma}_{x}$ denotes a unique continuous representative of $[h_{x} \circ \gamma]$ and $E_{x}:=\{t \in [a,b]:g^{\gamma}_{x}(t)=h_{x}(\gamma(t))\}$.

Given a metric space $\operatorname{Y}=(\operatorname{Y},\rho)$ and a map $f \in \mathfrak{B}([a,b],\operatorname{Y})$, for each $\epsilon > 0$ and $\delta > 0$, we consider the sets
\begin{equation}
\begin{split}
\label{eqq.discontinuity_set}
&\mathcal{D}_{f}(\epsilon,\delta):=\{(t',t'') \in [a,b]^{2}:|t'-t''| < \delta \hbox{ and } \rho(f(t'),f(t'')) \geq \epsilon\};\\
&\mathcal{C}_{f}(\epsilon,\delta):=\{(t',t'') \in [a,b]^{2}:|t'-t''| < \delta \hbox{ and } \rho(f(t'),f(t'')) < \epsilon\}.
\end{split}
\end{equation}
If $[f] \cap C([a,b],\operatorname{Y}) \neq \emptyset$, then by $\overline{f}$ we denote a unique continuous representative of $\overline{f}$.

\begin{Remark}
Since $f$ is Borel, the sets $\mathcal{D}_{f}(\epsilon,\delta)$ and $\mathcal{C}_{f}(\epsilon,\delta)$ are Borel for all $\epsilon,\delta >0$, provided that $\operatorname{Y}=(\operatorname{Y},\rho)$ is separable.
\end{Remark}

The following assertion looks standard. However, we are not able to give a precise reference. Since the proof is rather subtle,
we present the details.
\begin{Prop}
\label{Prop.4.2}
Let $\operatorname{Y}=(\operatorname{Y},\rho)$ be a complete separable metric space. Given $f \in \mathfrak{B}([a,b],\operatorname{Y})$, $[f] \cap C([a,b],\operatorname{Y})=\emptyset$ if and only if there is $\underline{\epsilon} > 0$ such that $\mathcal{L}^{2}(\mathcal{D}_{f}(\underline{\epsilon},\delta)) > 0$ for every $\delta > 0$.
\end{Prop}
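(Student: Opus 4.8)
The plan is to prove the logically equivalent statement obtained by negating both sides: $[f]\cap C([a,b],\operatorname{Y})\neq\emptyset$ if and only if for every $\epsilon>0$ there is $\delta>0$ with $\mathcal{L}^{2}(\mathcal{D}_{f}(\epsilon,\delta))=0$. The ``only if'' direction is routine. If $\overline{f}\in[f]\cap C([a,b],\operatorname{Y})$, then $\overline{f}$ is uniformly continuous on the compact interval $[a,b]$, so given $\epsilon>0$ one picks $\delta>0$ with $\rho(\overline{f}(t'),\overline{f}(t''))<\epsilon$ whenever $|t'-t''|<\delta$. Writing $N:=\{t:f(t)\neq\overline{f}(t)\}$, which is $\mathcal{L}^{1}$-null, every pair $(t',t'')\in\mathcal{D}_{f}(\epsilon,\delta)$ must have $t'\in N$ or $t''\in N$ (otherwise $\rho(f(t'),f(t''))=\rho(\overline{f}(t'),\overline{f}(t''))<\epsilon$, a contradiction), so $\mathcal{D}_{f}(\epsilon,\delta)\subset(N\times[a,b])\cup([a,b]\times N)$ is $\mathcal{L}^{2}$-null.

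For the ``if'' direction, assume that for each $n\in\mathbb{N}$ there is $\delta_{n}>0$ with $\mathcal{L}^{2}(\mathcal{D}_{f}(1/n,\delta_{n}))=0$. Since $\operatorname{Y}$ is separable, the sets $\mathcal{D}_{f}(1/n,\delta_{n})$ are Borel (by the Remark preceding the statement), so Fubini's theorem yields, for each $n$, a conull set $A_{n}\subset[a,b]$ such that for every $t\in A_{n}$ the slice $\{s\in[a,b]:|s-t|<\delta_{n},\ \rho(f(t),f(s))\geq 1/n\}$ is $\mathcal{L}^{1}$-null. Put $A:=\bigcap_{n}A_{n}$, which is still conull.

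The heart of the argument is a ``generic intermediate point'' step. Given $t',t''\in A$ with $|t'-t''|<\delta_{n}$, I would consider the interval $J:=(\max\{t',t''\}-\delta_{n},\min\{t',t''\}+\delta_{n})$. An elementary check shows $J$ contains both $t'$ and $t''$ and that $J\subset(t'-\delta_{n},t'+\delta_{n})\cap(t''-\delta_{n},t''+\delta_{n})$; hence $J\cap[a,b]$ has positive $\mathcal{L}^{1}$-measure. Since the exceptional slices attached to $t'$ and to $t''$ are both null, they cannot cover $J\cap[a,b]$, so one may choose $s\in J\cap[a,b]$ with $\rho(f(t'),f(s))<1/n$ and $\rho(f(t''),f(s))<1/n$; the triangle inequality then gives $\rho(f(t'),f(t''))<2/n$. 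Thus for every $\epsilon>0$, picking $n$ with $2/n<\epsilon$, we get $\rho(f(t'),f(t''))<\epsilon$ whenever $t',t''\in A$ and $|t'-t''|<\delta_{n}$, i.e. $f|_{A}$ is uniformly continuous.

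To finish, since $A$ is dense in $[a,b]$ (being conull) and $\operatorname{Y}$ is complete, $f|_{A}$ extends uniquely to a continuous map $\overline{f}:[a,b]\to\operatorname{Y}$, and $\overline{f}=f$ on the conull set $A$, so $\overline{f}\in[f]\cap C([a,b],\operatorname{Y})$. I expect the ``if'' direction to be the main obstacle --- specifically, arranging the Fubini slicing in the correct order and verifying the (elementary but slightly fiddly) interval inclusions so that a single point $s$ is simultaneously generic for $t'$ and for $t''$; the ``only if'' direction and the extension step are standard.
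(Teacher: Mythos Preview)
Your proof is correct, and in fact streamlines the paper's argument considerably. The necessity direction is identical to the paper's Step~1. For sufficiency, the paper proceeds pointwise: at each $t\in[a,b]$ it uses Fubini locally to extract a set $\widetilde{S}_{\epsilon}(t)$ of ``good'' parameters, then inductively builds an explicit Cauchy sequence $\{t_{n}(t)\}$ with $t_{n}(t)\to t$ and $\{f(t_{n}(t))\}$ Cauchy, defines $A(t):=\lim f(t_{n}(t))$, and finally uses compactness of $[a,b]$ to pass from these pointwise limits to a global continuous map. By contrast, you apply Fubini once (globally) to produce a single conull set $A$, and your ``generic intermediate point'' trick --- choosing $s\in J\cap[a,b]$ simultaneously good for both $t'$ and $t''$ --- immediately yields uniform continuity of $f|_{A}$; the continuous extension is then a one-liner. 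Your route avoids the induction $(S1)$--$(S2)$ and the finite-cover argument of the paper's Steps~3--5 entirely. The paper's construction is more explicit in the sense that it builds the representative at every point of $[a,b]$ via concrete sequences, which is in keeping with the paper's stated preference for explicit constructions over existence arguments; your approach is shorter and conceptually cleaner, and loses nothing in this instance.
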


\begin{proof}
It is equivalent to show that $[f] \cap C([a,b],\operatorname{Y}) \neq \emptyset$ if and only if, for every $\epsilon >0$, there is $\delta(\epsilon) > 0$ such that
$\mathcal{L}^{2}(\mathcal{D}_{f}(\epsilon,\delta(\epsilon))) = 0$. We split the proof into several elementary steps.

\textit{Step 1.} If $[f] \cap C([a,b],\operatorname{Y}) \neq \emptyset$, then
there is a set $E \subset [a,b]$ with $\mathcal{L}^{1}(E)=0$ such that $f$ becomes continuous after redefining on $E$. We set $S=[a,b] \setminus E$ and note that $\mathcal{L}^{2}(E \times E) = 0$, $\mathcal{L}^{2}(E \times S) = 0$ and $\mathcal{L}^{2}(S \times E) = 0$
by the Fubini theorem. At the same time, since
$\overline{f}$ is uniformly continuous on $[a,b]$, for each $\epsilon > 0$ there is $\delta(\epsilon) > 0$
such that $\mathcal{C}_{f}(\epsilon,\delta)$ contains intersection of $S \times S$ with the set $\{(t',t'') \in [a,b]^{2}:|t'-t''| < \delta(\epsilon)\}$.
This implies that $\mathcal{L}^{2}(\mathcal{D}_{f}(\epsilon,\delta)) = 0$ and completes the verification of necessity.

\textit{Step 2.} For each $t \in [a,b]$ and $\delta > 0$ we set $U_{\delta}(t):=[a,b] \cap (t-\delta,t+\delta)$ for brevity.
To prove the sufficiency, for every $t \in [a,b]$ and every $\epsilon > 0$, we fix $\delta=\delta_{\epsilon}(t) > 0$ such that, for
$\mathcal{L}^{2}$-a.e. pair $(t',t'') \in U_{\delta}(t) \times U_{\delta}(t)$, we have $\rho(f(t'),f(t'')) < \epsilon$. We set $\widetilde{\delta}_{\epsilon}(t):=\min\{\epsilon,\delta_{\epsilon}(t)\}.$
By the Fubini theorem, for every $t \in [a,b]$ and $\epsilon > 0$, there is a set $\widetilde{S}_{\epsilon}(t) \subset U_{\widetilde{\delta}_{\epsilon}(t)}(t)$
with $\mathcal{L}^{1}(U_{\widetilde{\delta}_{\epsilon}(t)}(t) \setminus \widetilde{S}_{\epsilon}(t)) = 0$
such that $\mathcal{L}^{1}(U_{\widetilde{\delta}_{\epsilon}(t)}(t) \setminus \Pi_{\epsilon}(t'))=0$ for each $t' \in \widetilde{S}_{\epsilon}(t)$, where
\begin{equation}
\label{eqq.pi_set}
\Pi_{\epsilon}(t'):=\{t'' \in U_{\widetilde{\delta}_{\epsilon}(t)}(t):(t',t'') \in \mathcal{C}_{f}(\epsilon,\widetilde{\delta}_{\epsilon}(t))\}, \quad t' \in \widetilde{S}_{\epsilon}(t).
\end{equation}

\textit{Step 3.} We fix an arbitrary $t \in [a,b]$ and put $\delta^{i}=\widetilde{\delta}_{\frac{1}{i}}(t)$, $\widetilde{S}^{i}(t)=\widetilde{S}_{\frac{1}{i}}(t)$ for every $i \in \mathbb{N}$.
We set $S^{0}(t):=\widetilde{S}^{1}(t)$ and fix an arbitrary $t_{1}(t) \in \widetilde{S}^{1}(t) \setminus \{t\}$.
Arguing by induction, we can easily build a sequence $\{t_{n}(t)\} \subset [a,b] \setminus \{t\}$
and a sequence of sets $\{S^{n}(t)\}$ such that
for every $n \in \mathbb{N}$ the following properties hold
(we set $\Pi^{n}(t):=\Pi_{\frac{1}{n}}(t_{n}(t))$, $n \in \mathbb{N}$ for brevity):
\begin{itemize}

\item[\((S1)\)] $t_{n}(t)$ is an arbitrary point in $(S^{n-1}(t) \cap \widetilde{S}^{n}(t)) \setminus \{t\}$;

\item[\((S2)\)] $S^{n}(t):=\widetilde{S}^{n}(t) \cap \Pi^{n}(t) \cap S^{n-1}(t)$.

\end{itemize}
Indeed, the base of induction is the construction of $S^{0}(t),\widetilde{S}^{1}(t)$ and the above choice of $t_{1}(t)$.
The induction step is clear from $(S1)$ and $(S2)$.

The first crucial property which follows from $(S1)$ and $(S2)$ is that
\begin{equation}
\label{eqq.6.3''}
t_{m}(t) \in \Pi_{\frac{1}{n}}(t_{n}(t)) \quad \hbox{for each $n \in \mathbb{N}$ and every $m \in \mathbb{N}$ satisfying $m \geq n$}.
\end{equation}
The second crucial property is that
\begin{equation}
\label{eqq.6.4''}
S^{n}(t) \subset U_{\delta^{n}}(t) \quad \hbox{and} \quad \mathcal{L}^{1}(U_{\delta^{n}}(t) \setminus S^{n}(t)) = 0 \quad \hbox{for every $n \in \mathbb{N}$}.
\end{equation}

\textit{Step 4.} Given $t \in [a,b]$, by \eqref{eqq.discontinuity_set} -- \eqref{eqq.6.3''} we have
\begin{equation}
\label{eqq.Cauchy_sequence}
\lim\limits_{n \to \infty}t_{n}(t) = t \quad \hbox{and} \quad \rho(f(t_{l}(t)),f(t_{m}(t))) \le \frac{1}{m} \quad \hbox{for each $m,l \in \mathbb{N}$ with $l \geq m$}.
\end{equation}
By \eqref{eqq.Cauchy_sequence} and the completeness of $\operatorname{Y}$, for each $t \in [a,b]$, there is $A(t) \in \operatorname{Y}$ such that
\begin{equation}
\label{eqq.4.3}
\rho(f(t_{n}(t)), A(t)) \le \frac{1}{n} \quad \hbox{for every} \quad n \in \mathbb{N}.
\end{equation}
As a result, taking into account \eqref{eqq.discontinuity_set}, \eqref{eqq.pi_set}, $(S2)$ and \eqref{eqq.4.3}, for each  $n \in \mathbb{N}$, we get
\begin{equation}
\label{eqq.4.4}
\rho(f(t'),A(t)) \le \rho(f(t'),f(t_{n}(t)))+\rho(f(t_{n}(t)), A(t)) < \frac{2}{n} \quad \hbox{for every} \quad t' \in S^{n}(t).
\end{equation}

\textit{Step 5.}
Given $n \in \mathbb{N}$, the family $\{(t-\frac{\delta_{1/n}(t)}{2},t+\frac{\delta_{1/n}(t)}{2}):t \in [a,b]\}$ is a covering of the compact set $[a,b]$.
Hence, there is $\{t_{n,i}\}_{i=1}^{N_{n}} \subset [a,b]$ with $N_{n} \in \mathbb{N}$ such that
\begin{equation}
\label{eqq.4.5}
[a,b] \subset \bigcup\limits_{i=1}^{N_{n}}\Bigl(t_{n,i}-\frac{\delta_{n,i}}{2},t_{n,i}+\frac{\delta_{n,i}}{2}\Bigr),
\end{equation}
where
$\delta_{n,i}:=\delta_{1/n}(t_{n,i})$, $i \in \{1,...,N_{n}\}$.
By \eqref{eqq.6.4''}, we have $\mathcal{L}^{1}(D_{n}) = 0$ for each $n \in \mathbb{N}$, where
\begin{equation}
\label{eqq.4.6}
D_{n}: = \bigcup\limits_{i=1}^{N_{n}}(t_{n,i}-\delta_{n,i},t_{n,i}+\delta_{n,i}) \setminus S^{n}(t_{n,i}), \quad n \in \mathbb{N}.
\end{equation}
Given $n \in \mathbb{N}$, we set $\delta(n):=\min\{\frac{\delta_{n,i}}{2}, i=1,...,N_{n}\}$. Combining \eqref{eqq.4.4} -- \eqref{eqq.4.6}
we get
\begin{equation}
\label{eqq.4.7}
([a,b] \setminus D_{n}) \times ([a,b] \setminus D_{n})   \subset \mathcal{C}_{f}\Bigl(\frac{4}{n},\delta(n)\Bigr) \quad \hbox{for every} \quad n \in \mathbb{N}.
\end{equation}
Now, we put
$\underline{E}:=\cap_{n=1}^{\infty}[a,b] \setminus D_{n}$
and take into account \eqref{eqq.4.7}. This gives
$$
A(t)=\lim\limits_{\substack{t' \to t \\ t' \in \underline{E}}}f(t) \quad \hbox{for every} \quad t \in [a,b].
$$
Furthermore, $A(t)=f(t)$ for every $t \in \underline{E}$ and $\mathcal{L}^{1}([a,b] \setminus \underline{E})=0$. Hence, letting
$\widetilde{f}(t):=f(t)$ for $t \in \underline{E}$ and $\widetilde{f}(t):=A(t)$ for $t \in [a,b] \setminus \underline{E}$ we
obtain a map $\widetilde{f} \in C([a,b],\operatorname{Y})$ such that $\widetilde{f} \in [f]$. This completes the proof of the sufficiency.

The proposition is proved.
\end{proof}

The following assertion which is based on Proposition \ref{Prop.4.2} is crucial for our analysis.
\begin{Prop}
\label{Prop.4.3}
Let $\gamma \in \mathfrak{B}([a,b],\operatorname{X})$ be such that $[h\circ\gamma] \cap C([a,b]) \neq \emptyset$ for every $h \in \operatorname{LIP}(\operatorname{X})$.
Then, for each $\epsilon > 0$ and every $\underline{q} > 1$,
\begin{equation}
\label{eqq.62'}
[a,b] \subset \bigcup\limits_{i \in I_{\epsilon}}g^{-1}_{\epsilon,i}((-\underline{q}\epsilon,\underline{q}\epsilon)),
\end{equation}
where, for each $i \in I_{\epsilon}$, $g_{\epsilon,i}$ is a unique continuous representative of $h_{x_{\epsilon,i}} \circ \gamma$.

\end{Prop}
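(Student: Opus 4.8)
The plan is to realize $\inf_{i\in I_\epsilon}g_{\epsilon,i}$ as, up to the slack $\underline q>1$, the continuous representative of a single Lipschitz post-composition, and then to use Proposition~\ref{Prop.4.2} to rule out the only obstruction to the desired covering. Throughout I abbreviate $h_{x_{\epsilon,i}}=\operatorname{d}(x_{\epsilon,i},\cdot)$ and $\mathcal{N}_\epsilon=\{x_{\epsilon,i}:i\in I_\epsilon\}$.

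First I would record the elementary facts about the relevant Lipschitz functions. Since $\mathcal{N}_\epsilon$ is a \emph{maximal} $\epsilon$-separated subset of $\operatorname{X}$, the function $H_\epsilon:=\operatorname{dist}(\cdot,\mathcal{N}_\epsilon)$ belongs to $\operatorname{LIP}_1(\operatorname{X})$ and satisfies $H_\epsilon(x)<\epsilon$ for every $x\in\operatorname{X}$. By the hypothesis $[H_\epsilon\circ\gamma]\cap C([a,b])\neq\emptyset$, so $H_\epsilon\circ\gamma$ has a continuous representative $d_\epsilon$; since $d_\epsilon=H_\epsilon\circ\gamma<\epsilon$ $\mathcal{L}^1$-a.e.\ and $d_\epsilon\in C([a,b])$, continuity forces $d_\epsilon\le\epsilon$ on all of $[a,b]$. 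Moreover, for each finite $F\subset I_\epsilon$ the function $\operatorname{dist}(\cdot,\{x_{\epsilon,i}:i\in F\})=\min_{i\in F}h_{x_{\epsilon,i}}$ is $1$-Lipschitz, and its continuous representative equals $\min_{i\in F}g_{\epsilon,i}$ (two continuous functions coinciding $\mathcal{L}^1$-a.e.\ on an interval are equal). Intersecting over $i$ the full-measure sets on which $g_{\epsilon,i}$ agrees with $h_{x_{\epsilon,i}}\circ\gamma$, and using maximality once more, one gets $\inf_{i\in I_\epsilon}g_{\epsilon,i}=d_\epsilon$ $\mathcal{L}^1$-a.e., while $g_{\epsilon,i}\ge d_\epsilon$ everywhere for every $i$.

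Next I would fix $t_0\in[a,b]$ and argue by contradiction, supposing $g_{\epsilon,i}(t_0)\ge\underline q\epsilon$ for every $i\in I_\epsilon$, and fix an auxiliary parameter $\epsilon'\in(0,(\underline q-1)\epsilon)$. The crux is to produce a radius $\delta>0$ and a full-measure set $E'\subset[a,b]$ such that $\operatorname{d}(\gamma(t'),\gamma(t''))<\epsilon'$ whenever $t',t''\in E'$ and $|t'-t_0|,|t''-t_0|<\delta$, i.e.\ that $\gamma$ has essentially small oscillation near $t_0$. Granting this, choose $t_1\in E'$ with $|t_1-t_0|<\delta$ and, by maximality, an index $i_0$ with $\operatorname{d}(x_{\epsilon,i_0},\gamma(t_1))<\epsilon$; then for every $t\in E'$ with $|t-t_0|<\delta$ we have $\operatorname{d}(x_{\epsilon,i_0},\gamma(t))\le\operatorname{d}(x_{\epsilon,i_0},\gamma(t_1))+\operatorname{d}(\gamma(t_1),\gamma(t))<\epsilon+\epsilon'$, hence $g_{\epsilon,i_0}(t)<\epsilon+\epsilon'$ for $\mathcal{L}^1$-a.e.\ $t$ in $(t_0-\delta,t_0+\delta)\cap[a,b]$; since $g_{\epsilon,i_0}$ is continuous and such $t$ are dense, $g_{\epsilon,i_0}(t_0)\le\epsilon+\epsilon'<\underline q\epsilon$, a contradiction. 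Therefore $[a,b]\subset\bigcup_{i\in I_\epsilon}g_{\epsilon,i}^{-1}((-\underline q\epsilon,\underline q\epsilon))$.

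I expect the ``essentially small oscillation of $\gamma$ near $t_0$'' to be the genuine difficulty, and it is where Proposition~\ref{Prop.4.2} enters. The natural route is to embed $\operatorname{X}$ isometrically into a complete separable metric space through a countable dense set $\{z_n\}$ (as in Lemma~\ref{Lm.Borel_regularity}), so that $\operatorname{d}(\gamma(t'),\gamma(t''))=\sup_n|\operatorname{d}(z_n,\gamma(t'))-\operatorname{d}(z_n,\gamma(t''))|$, and then combine the essential continuity of every $\operatorname{d}(z_n,\gamma(\cdot))$ (which is precisely the ``$\mathcal{D}$-sets are $\mathcal{L}^2$-null for small $\delta$'' condition from Proposition~\ref{Prop.4.2}) with the $\epsilon$-separation of $\mathcal{N}_\epsilon$: an essential oscillation of $\gamma$ below $\epsilon'<\epsilon$ cannot straddle two distinct $\epsilon$-balls, which pins the relevant cell index to a single value near $t_0$ and thus prevents it from ``escaping to infinity'' as $t\to t_0$. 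The subtlety is to extract a \emph{single} scale $\delta$ rather than one depending on $n$; I expect this uniformity to be the technical heart of the proof, while the reductions above are routine bookkeeping.
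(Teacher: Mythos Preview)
Your reduction is clean: if one knew that $\gamma$ has essentially small oscillation near $t_0$ (i.e.\ for some $\delta>0$ and some full-measure $E'$ one has $\operatorname{d}(\gamma(t'),\gamma(t''))<\epsilon'$ for all $t',t''\in E'\cap U_\delta(t_0)$), then the covering at $t_0$ follows exactly as you wrote. The problem is that this oscillation claim is not a lemma on the way to Proposition~\ref{Prop.4.3}; it is essentially the content of Theorem~\ref{Th.representative}, whose proof in the paper \emph{uses} Proposition~\ref{Prop.4.3} (to obtain precompactness of $\gamma(\underline{E})$, which is what makes a finite net and hence a uniform $\delta$ available). Your sketch for the oscillation claim---``combine the essential continuity of each $\operatorname{d}(z_n,\gamma(\cdot))$ with the $\epsilon$-separation of $\mathcal{N}_\epsilon$''---does not supply the missing uniformity: essential continuity of each coordinate gives a scale $\delta_n$ depending on $n$, and the $\epsilon$-separation of centers does \emph{not} force $\operatorname{d}(\gamma(t'),\gamma(t''))$ to be large when $\gamma(t')\in B_\epsilon(x_{\epsilon,i})$ and $\gamma(t'')\in B_\epsilon(x_{\epsilon,j})$ with $i\neq j$ (the balls have radius equal to the separation and may overlap heavily). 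Nothing in your argument uses the failure hypothesis at $t_0$, so if your route worked it would prove small oscillation at \emph{every} point, i.e.\ it would prove Theorem~\ref{Th.representative} directly and render Proposition~\ref{Prop.4.3} superfluous.

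The paper's argument is genuinely different and avoids this circularity. It does not try to show oscillation is small; instead it assumes the measure-theoretic failure $\mathcal{L}^1(U_m(\underline t)\setminus G_i)>0$ for all $i,m$, and from this builds an (at most countable) sequence of indices $i_1,i_2,\ldots$ such that $\gamma$ keeps visiting, arbitrarily close to $\underline t$, sets $\widetilde G_{i_{j}}\setminus\bigcup_{s<j}G_{i_s}$ of positive measure. The slack $q\in(1,\underline q)$ is used precisely here: if $t_1\in\widetilde G_{i_{j_1}}\cap E$ and $t_2\in(\widetilde G_{i_{j_2}}\setminus\bigcup_{s<j_2}G_{i_s})\cap E$ with $j_1<j_2$, then $\operatorname{d}(\gamma(t_1),\gamma(t_2))\ge(q-1)\epsilon$, since otherwise $\gamma(t_2)\in B_{q\epsilon}(x_{i_{j_1}})$, contradicting $t_2\notin G_{i_{j_1}}$. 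One then defines a $1$-Lipschitz function on the union of the images $\gamma(S_j)$ taking alternating values $\pm(q-1)\epsilon/2$, extends it by McShane--Whitney, and invokes Proposition~\ref{Prop.4.2} to conclude that this post-composition has no continuous representative---contradicting the hypothesis. In short, the paper constructs a single bad Lipschitz function from the failure, whereas your proposal tries to deduce a positive regularity statement about $\gamma$ that is not yet available at this stage.
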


\begin{proof}
We fix $\epsilon > 0$, $q \in (1,\underline{q})$ and simplify some notation. We set $\mathcal{N}:=\mathcal{N}_{\epsilon}$, $I:=I_{\epsilon}$,
$x_{i}:=x_{\epsilon,i}$ and $g_{i}:=g_{\epsilon,i}$ for each $i \in I$. Furthermore, we put $\widetilde{G}_{i}:=g^{-1}_{i}((-\epsilon,\epsilon))$ and $G_{i}:=g^{-1}_{i}((-q\epsilon,q\epsilon))$
for every $i \in I$.
We set $E:=\cap_{i \in I}E_{x_{i}}$ and note that, by the maximality of $\mathcal{N}$, 
\begin{equation}
\label{eqq.61'}
E \subset \bigcup\limits_{i \in I}\widetilde{G}_{i}.
\end{equation}
Since $I$ is at most countable,
without loss of generality we may assume that $\mathcal{L}^{1}(\widetilde{G}_{i}) > 0$ for all $i \in I$ (otherwise, we modify $E$ by deleting all intersections $E \cap \widetilde{G}_{i}$
with $\mathcal{L}^{1}(\widetilde{G}_{i})=0$).

To prove \eqref{eqq.62'} we assume that $[a,b] \setminus E \neq \emptyset$ (otherwise, there is nothing to prove) and
fix $\underline{t} \in [a,b] \setminus E$. We put $U_{l}(\underline{t}):=(\underline{t}-1/l,\underline{t}+1/l) \cap [a,b]$, $l \in \mathbb{N}$.
It is sufficient to show that
\begin{equation}
\label{eqq.forgotten}
\mathcal{L}^{1}(U_{\underline{m}}(\underline{t}) \setminus G_{\underline{i}}) = 0 \quad \hbox{for some} \quad \hbox{$\underline{i} \in I$ and $\underline{m} \in \mathbb{N}$}.
\end{equation}
Indeed, by the continuity of $g_{i}$, $i \in I$, we have $\underline{t} \in g^{-1}_{\underline{i}}([-q\epsilon,q\epsilon]) \subset g^{-1}_{\underline{i}}((-\underline{q}\epsilon,\underline{q}\epsilon))$.
Since $\underline{t} \in [a,b] \setminus E$ was chosen arbitrarily, this gives \eqref{eqq.62'}.

To establish \eqref{eqq.forgotten} we assume the contrary, i.e.
\begin{equation}
\label{eqq.62''}
\mathcal{L}^{1}(U_{m}(\underline{t}) \setminus G_{i}) > 0 \quad \hbox{for each $i \in I$ and every $m \in \mathbb{N}$}.
\end{equation}
We fix an arbitrary $i_{1} \in I$ and suppose that, for some $j \in \mathbb{N}$, we have already chosen indices $i_{1},...,i_{j+1} \in I$ such that, for each $l \in \{1,...,j\}$,
\begin{equation}
\label{eqq.63'}
\mathcal{L}^{1}(U_{m}(\underline{t}) \setminus
\cup_{s=1}^{l} G_{i_{s}}) > 0 \quad \hbox{for every $m \in \mathbb{N}$}
\end{equation}
and, furthermore,
\begin{equation}
\label{eqq.64'}
\mathcal{L}^{1}\Bigl(U_{l}(\underline{t}) \setminus (\widetilde{G}_{i_{l+1}} \bigcup
\cup_{s=1}^{l} G_{i_{s}})\Bigr)
<
\mathcal{L}^{1}(U_{l}(\underline{t}) \setminus \cup_{s=1}^{l} G_{i_{s}}).
\end{equation}
By \eqref{eqq.61'} it is clear that \eqref{eqq.63'} and \eqref{eqq.64'} hold true for $j=1$.
If $\mathcal{L}^{1}(U_{m_{j}}(\underline{t}) \setminus \cup_{s=1}^{j+1} G_{i_{s}})=0$ for some $m_{j} \in \mathbb{N}$, then we stop. Otherwise, we continue the procedure.
As a result, there is either a finite set $\underline{I}=\{i_{j}\}_{j=1}^{\underline{j}} \subset I$ with $\underline{j} \in \mathbb{N}$ or
an infinite set $\underline{I}=\{i_{j}\}_{j=1}^{\infty} \subset I$
such that \eqref{eqq.63'} and \eqref{eqq.64'} hold either for every $l \in \{1,...,\underline{j}\}$ or for every $l \in \mathbb{N}$, respectively.

By \eqref{eqq.62''}--\eqref{eqq.64'}, for each $m \in \mathbb{N}$, there are
$i' \in \underline{I}$ and $i'' \in \underline{I}$ such that
\begin{equation}
\label{eqq.65'}
\mathcal{L}^{1}(U_{m}(\underline{t}) \cap G_{i'}) > 0 \quad \hbox{and} \quad  \mathcal{L}^{1}(U_{m}(\underline{t}) \setminus \widetilde{G}_{i''} \cup G_{i'}) < \mathcal{L}^{1}(U_{m}(\underline{t}) \setminus  G_{i'}).
\end{equation}

The crucial observation is that, given $j_{1} < j_{2}$,
\begin{equation}
\label{eqq.66'}
\operatorname{d}(\gamma(t_{1}),\gamma(t_{2})) \geq (q-1)\epsilon \quad \hbox{for every $t_{1} \in \widetilde{G}_{i_{j_{1}}} \cap E$ and $t_{2} \in (\widetilde{G}_{i_{j_{2}}} \setminus \cup_{j < j_{2}}G_{i_{j}}) \cap  E$}.
\end{equation}
Indeed, otherwise, by the triangle inequality $\operatorname{d}(\gamma(t_{2}),x_{i_{1}}) \le \operatorname{d}(\gamma(t_{2}),\gamma(t_{1}))+\operatorname{d}(x_{i_{1}},\gamma(t_{1})) < q\epsilon.$
Consequently, $t_{2} \in \widetilde{G}_{i_{j_{1}}}$. This contradicts the construction.

We set $S_{1}:=\widetilde{G}_{i_{1}} \cap E$, $S_{j}:=\gamma(\widetilde{G}_{i_{j}} \cap E \setminus \cup_{1 \le s < j}G_{i_{s}})$, $j > 2$ and $S:=\cup_{j}S_{j}$.
We put
$$
\widetilde{\underline{h}}(x):=\sum\limits_{j}(-1)^{j}\frac{(q-1)\epsilon}{2}\chi_{S_{j}}(x), \quad x \in \operatorname{X}.
$$
It follows from \eqref{eqq.66'} that $\widetilde{\underline{h}} \in \operatorname{LIP}_{1}(S)$. By Lemma \ref{Lm.McShane_Whitney} there is $\underline{h} \in \operatorname{LIP}_{1}(\operatorname{X})$ such that $\underline{h}|_{S}=\widetilde{\underline{h}}$.
Applying Proposition \ref{Prop.4.2} with $(\operatorname{Y},\rho)=(\mathbb{R},|\cdot|)$ and taking into account \eqref{eqq.65'} we deduce that $[\underline{h} \circ \gamma] \cap C([a,b]) = \emptyset$.
This leads to a contradiction with the assumptions of the lemma and concludes the proof.
\end{proof}

\begin{Remark}
One can show that
if the space $(\operatorname{X},\operatorname{d})$ satisfies the uniformly locally metrically doubling property (i.e., for each $R > 0$, there is $C(R) > 0$ such that,
for each ball $B_{R}(x)$, every $R/2$-separated subset of $B_{R}(x)$ consists of at most $C(R)$ different points), then the above assertion remains valid for $q=1$.
In the general case, we have essentially used that $q > 1$ in our proof.  Furthermore, we do not know, whether \eqref{eqq.62'} holds true with $q=1$ for a generic $\operatorname{X}$, but this seems unlikely.
\end{Remark}

\textit{Proof of Theorem \ref{Th.representative}.}
It will be convenient to split the proof into several steps.

\textit{Step 1.}
We fix an arbitrary sequence $\epsilon_{n} \subset (0,+\infty)$ such that $\epsilon_{n} \downarrow 0$, $n \to \infty$, and put
\begin{equation}
\label{eqq.67'}
\underline{E}:=\bigcap\limits_{n=1}^{\infty}\bigcap\limits_{i \in \mathcal{N}_{\epsilon_{n}}}E_{x_{\epsilon_{n},i}}.
\end{equation}
Since $[a,b]$ is compact, it is immediate from \eqref{eqq.67'} and Proposition \ref{Prop.4.3} that $\gamma(\underline{E})$ is precompact in $\operatorname{X}$.

\textit{Step 2.}
Assume that $[\gamma] \cap C([a,b]) = \emptyset$ and apply Proposition \ref{Prop.4.2} with $\operatorname{Y}=\operatorname{cl}(\gamma(\underline{E}))$, $f=\gamma$ and fix $\underline{\epsilon} > 0$
such that $\mathcal{L}^{2}(\mathcal{D}_{\gamma}(\underline{\epsilon},\delta)) > 0$ for every $\delta > 0$.
Let $\mathcal{N}$ be an arbitrary maximal $\underline{\epsilon}/8$-separated subset of $\operatorname{cl}(\gamma(\underline{E}))$. Since 
$\operatorname{cl}(\gamma(\underline{E}))$ is compact, we have
$N:=\operatorname{card}\mathcal{N} < +\infty$.

\textit{Step 3.}
It follows from the triangle inequality that, given $\delta > 0$,
\begin{equation}
\label{eqq.6.18}
\begin{split}
&\operatorname{d}(y,z) \geq \frac{\underline{\epsilon}}{4} \quad \hbox{for every $t',t'' \in \mathcal{D}_{\gamma}(\underline{\epsilon},\delta)$ and $y,z \in \mathcal{N}$ such that}\\
&\gamma(t') \in B_{\underline{\epsilon}/8}(y) \quad \hbox{and} \quad \gamma(t'') \in B_{\underline{\epsilon}/8}(z).
\end{split}
\end{equation}

% and $\gamma(t') \in B_{\underline{\epsilon}/8}(y)$,
%$\gamma(t'') \in B_{\underline{\epsilon}/8}(z)$ for some $y,z \in \mathcal{N}$, then $\operatorname{d}(y,z )$
\textit{Step 4.}
Given $y,z \in \mathcal{N}$ and $\delta > 0$, we put
$$
T_{y,z}(\underline{\epsilon},\delta):=\{(t',t'') \in \mathcal{D}_{\gamma}(\underline{\epsilon},\delta) \cap (\underline{E} \times \underline{E}):\gamma(t') \in B_{\underline{\epsilon}/8}(y), \gamma(t'') \in B_{\underline{\epsilon}/8}(z)\}.
$$
Since $N < +\infty$, there exist $\underline{y},\underline{z} \in \mathcal{N}$ such that
\begin{equation}
\label{eqq.bad_pairs}
\mathcal{L}^{2}(T_{\underline{y},\underline{z}}(\underline{\epsilon},\delta)) > 0 \quad \hbox{for every} \quad \delta > 0.
\end{equation}

\textit{Step 5.} Now we put
$$
\widetilde{h}(x):=\frac{\underline{\epsilon}}{8}\Bigl(\chi_{B_{\underline{\epsilon}/8}(\underline{y})}(x)+(-1)\chi_{B_{\underline{\epsilon}/8}(\underline{z})}(x)\Bigr), \quad x \in \operatorname{X}.
$$
It follows from \eqref{eqq.6.18} that $\widetilde{h} \in \operatorname{LIP}_{1}(V)$, where $V:=B_{\underline{\epsilon}/8}(\underline{y}) \cup B_{\underline{\epsilon}/8}(\underline{z})$. By Lemma \ref{Lm.McShane_Whitney},
there exists $h \in \operatorname{LIP}_{1}(\operatorname{X})$ such that $h|_{V}=\widetilde{h}$.
We have
\begin{equation}
\label{eqq.bad_pairs_2}
|h(\gamma(t'))-h(\gamma(t''))| = \frac{\underline{\epsilon}}{4} \quad \hbox{for each pair} \quad (t',t'') \in T_{\underline{y},\underline{z}}(\underline{\epsilon},\delta).
\end{equation}
Combining \eqref{eqq.bad_pairs}, \eqref{eqq.bad_pairs_2} and applying Proposition \ref{Prop.4.2} with $\operatorname{Y}=(\mathbb{R},|\cdot|)$, $f = h\circ\gamma$,
we obtain the equality $[h \circ \gamma] \cap C([a,b]) = \emptyset$. This contradiction completes
the proof.
\hfill$\Box$

Now we are ready to prove the second main result of this paper.

\textit{Proof of Theorem \ref{Th.main_2}.} By Lemma \ref{Lm.Borel_regularity}, $\gamma \in \mathfrak{B}([a,b],\operatorname{X})$.
By Theorem  \ref{Th.representative}, there is a set $E \subset [a,b]$ with $\mathcal{L}^{1}([a,b] \setminus E) =0$
such that the map $\gamma$ becomes continuous after a possible change on the set $[a,b] \setminus E$. In particular,
$\gamma$ is continuous on $E$. Hence, for every $h \in \operatorname{LIP}(\operatorname{X})$ the post-composition $h \circ \gamma$ is continuous on $E$.
Since $E$ is dense in $[a,b]$ and any continuous map is uniquely determined by its values on $E$, we immediately deduce that if $\overline{\gamma}$ is a unique element in
the set $[\gamma] \cap C([a,b],\operatorname{X})$, then
$h \circ \overline{\gamma}$ is the unique element in $[h \circ \gamma] \cap C([a,b])$. It remains to combine Proposition \ref{Prop.Sobolev_characterization} with Theorem \ref{Th.main_1}.
\hfill$\Box$

\end{document}